\documentclass[pdflatex,sn-mathphys-num]{sn-jnl}
\usepackage{graphicx}%
\usepackage{multirow}%
\usepackage{amsmath,amssymb,amsfonts}%
\usepackage{amsthm}%
\usepackage{mathrsfs}%
\usepackage[title]{appendix}%
\usepackage{xcolor}%
\usepackage{bbm}
\usepackage{textcomp}%
\usepackage{manyfoot}%
\usepackage{booktabs}%
\usepackage{algorithm}%
\usepackage{algorithmicx}%
\usepackage{algpseudocode}%
\usepackage{listings}%
\let\orcidlogo\relax
\usepackage{orcidlink}
\theoremstyle{thmstyleone}%
\newtheorem{theorem}{Theorem}
\newtheorem{proposition}[theorem]{Proposition}%
\theoremstyle{thmstyletwo}%
\newtheorem{remark}{Remark}%
\theoremstyle{thmstylethree}%
\newtheorem{definition}{Definition}%
\newtheorem{lemma}{Lemma}

\begin{document}
\title[Quantitative Estimates for Mean-Field Limits]
{Quantitative Estimates for Mean-Field Limits and Correlation
Functions through a Duality Framework}

\author*[1]{
\fnm{Nadia} \sur{Khoury}\, \orcidlink{0009-0008-4439-6656}
}
\author[1]{\fnm{Pierre-Emmanuel} \sur{Jabin}\, \orcidlink{0000-0001-5998-0347}
}

\affil[1]{
  \orgdiv{Department of Mathematics},
  \orgname{The Pennsylvania State University},
  \orgaddress{
    \city{University Park},
    \postcode{16802},
    \state{Pennsylvania},
    \country{USA}
  }
}

\abstract{
We investigate the mean-field limit for interacting particle systems through a duality-based framework and obtain quantitative estimates on the convergence of marginals as well as on correlation functions. The analysis applies to second-order Vlasov systems with pairwise interactions under a mean-field scaling and relies on a hierarchy of dual cumulants associated with the particle dynamics. In particular, for merely square--integrable interaction forces, we derive the natural fluctuation--scale rate \(\mathcal{O}(N^{-1/2})\). By introducing an iterative argument on the hierarchy of dual cumulants, we leverage this bound to recover the optimal mean-field rate \(\mathcal{O}(N^{-1})\) and to obtain robust estimates on the dual cumulants, at the expense of corresponding regularity assumptions on the interaction kernel. Finally, using the relation between dual and direct correlations, we transfer these bounds to direct cumulants, yielding refined information on correlations and deviations from chaos. The approach provides a unified framework for simultaneously controlling the mean-field limit and the higher-order correlation structure of the particle system.

\medskip
\noindent\textbf{Mathematics Subject Classification (2020):}
35Q83, 82C22, 82C40.
}

\keywords{Mean-field limit, quantitative propagation of chaos,
Vlasov equation, Liouville equation, dual cumulants,
higher-order correlations}

\maketitle

\begingroup
\renewcommand{\thefootnote}{}
\footnotetext{%
\noindent
E-mail addresses:
\href{mailto:npk5371@psu.edu}{npk5371@psu.edu}
(N. Khoury);
\href{mailto:pejabin@psu.edu}{pejabin@psu.edu}
(P.-E. Jabin).
}
\addtocounter{footnote}{-1}
\endgroup

\medskip

\tableofcontents

\section{Introduction}\label{sec1}

We investigate the collective behavior of a large system of \(N\) identical
particles interacting pairwise through forces under a mean-field scaling. We consider Newtonian dynamics with a possible velocity diffusion. Let \(\Omega = \mathbb{R}^d\) or \(\mathbb T^d\), and let $\mathcal{D} := \Omega \times \mathbb{R}^d$. For \(i=1,\dots,N\) let \((X_i(t),V_i(t))\in\mathcal D\) denote the position and velocity of the \(i\)-th particle at time \(t\). The particle system is governed by
\begin{equation}\label{eq:SDE_system}
\begin{cases}
dX_i = V_i\,dt, \\[2mm]
dV_i = \displaystyle \frac{1}{N-1}\sum_{\substack{j=1\\ j\neq i}}^N K(X_i-X_j)\,dt
+\sqrt{2\sigma}\,dW_i,
\end{cases}
\end{equation}
where \(K:\Omega\to\mathbb{R}^d\) denotes the pairwise interaction force kernel, \(\sigma\ge 0\) is the diffusion coefficient, and $W_i$ are $N$ independent standard Wiener Processes (Brownian motions) in $\mathbb{R}^d$. The stochastic term in \eqref{eq:SDE_system} should be understood in the Itô sense.

Although the formulation above allows for any \(\sigma\ge0\), our main interest is the deterministic case \(\sigma=0\), and the vanishing diffusion setting $\sigma=\sigma_N\to 0$ as $N\to \infty$. Our proofs are essentially transparent in terms of diffusion, so that we can allow any value of $\sigma\geq 0$. For simplicity, however, we take $\sigma=0$ in the rest of the article.

Our goal is to derive a precise statistical description of the particles' dynamics. In the deterministic case \(\sigma=0\), the particle system~\eqref{eq:SDE_system} leads to the well-known Liouville equation
\begin{equation}\label{eq:Liouville_equation}
\begin{aligned}
\partial_t F_{N} + \sum_{i=1}^N \left(v_i \cdot \nabla_{x_i} F_{N} + \frac{1}{N-1} \sum_{\substack{j=1\\ j\neq i}}^N K(x_i -x_j) \cdot \nabla_{v_i} F_{N}\right) = 0,
\end{aligned}
\end{equation}
\noindent
where \(F_{N}\) is the full joint law of the system and is correspondingly a probability density on the \(N\)-particle phase space \( \mathcal{D}^N = \left(\Omega \times \mathbb{R}^d\right)^N \). In addition, we assume that at initial time particles are \(f^0\)-chaotic in the sense of 
\begin{equation}\label{eq:chaotic}
F_N|_{t=0} = \left(f^0\right)^{\otimes N} ,
\end{equation}
\noindent
for some bounded probability density \(f^0 \in \mathcal{P} (\mathcal{D}) \cap L^{\infty}(\mathcal{D})\). This assumption could be relaxed to some extent by taking $F_{N}|_{t=0}$ close enough to $\left(f^0\right)^{\otimes N}$, in some strong topology, but we keep~\eqref{eq:chaotic} for simplicity here. 

The full joint law $F_N$ is however posed in a space that grows with $N$, and for this reason, the so-called marginal distributions offer an easier description of the  collective behavior of particles. For each $0\le m\le N$, the $m$--th particle marginal is defined by
\[
F_{N,m}(z_1,\dots,z_m)
:= \int_{\mathcal{D}^{\,N-m}} F_{N}(z_1,\dots,z_N)\,
dz_{m+1}\cdots dz_N,
\]
where we denote $z_i=(x_i,v_i)$ for simplicity. The marginal $F_{N,m}$ can also be defined as the law of the partial or reduced $(X_1,V_1),\ldots,(X_m,V_m)$. As such $F_{N,m}$ cannot satisfy a closed equation such as~\eqref{eq:Liouville_equation} for $F_N$. This is what makes obtaining any estimates on the $F_{N,m}$ challenging.

The dynamics of the particle system is expected to follow the so-called mean-field density~$f$ which solves the limiting Vlasov equation, 
\begin{equation}\label{eq:vlasov_eq}
\partial_t f + v \cdot \nabla_x f + (K * f) \cdot \nabla_v f = 0,\qquad f|_{t=0} = f^0,
\end{equation}
where we denote \(K * f (x) := \int_{\mathcal{D}} K (x-\bar{x}) f(\bar{x},\bar{v}) d \bar{x} d \bar{v} \). We further assume that \(f>0\) $a.e$ on \( [0,T]\times \mathcal{D}\), so that the logarithmic derivatives and the quotients involving \(f\) appearing below are well defined almost everywhere. Assuming tensorized initial data,
\[
F_{N,m}|_{t=0}=(f^0)^{\otimes m},
\]
one expects propagation of chaos, namely
\[
F_{N,m}-f^{\otimes m}\longrightarrow0,
\qquad\text{as }N\to\infty.\]

Our first main result consists in proving the optimal convergence rate on the marginals. We equip \(C_c(\mathcal D^m)\) with the \(L^\infty\)-norm and denote its dual by
\(C_c(\mathcal D^m)^\star\).
\begin{theorem}\label{thm1}

Let $F_N \in L^{\infty}_{loc}(\mathbb{R}^+; L^1(\mathcal D^N)\cap L^\infty(\mathcal D^N))$ be a global weak duality solution of the Liouville equation~\eqref{eq:Liouville_equation}, as in Appendix A, with $f^0$--chaotic initial data~\eqref{eq:chaotic}, where $f^0 \in \mathcal P(\mathcal D)\cap L^\infty(\mathcal D)$. Let $f \in L^{\infty}_{loc}(\mathbb{R}^+; \mathcal P(\mathcal D)\cap L^\infty(\mathcal D))$ be a bounded weak solution of the Vlasov equation~\eqref{eq:vlasov_eq} with initial data $f^0$ such that $\operatorname{ess}\sup_{x_\star} \int |\nabla_{v_\star}\log f(z_\star)|^2 f(z_\star)\,dv_\star < \infty$

\begin{enumerate}

\item Assume that the interaction kernel satisfies 
\[
K \in \big( L^4(\Omega;\mathbb R^d) + L^\infty(\Omega;\mathbb R^d) \big)\cap H^1(\Omega;\mathbb R^d),
\qquad
K*f \in L^\infty(0,T; W^{2,\infty}(\Omega ; \mathbb{R}^d)),
\]
and that the mean-field density $f$ satisfies
\[
\int_0^T \left( \int_{\mathcal D} \Big( |\nabla_v \log f|^2 + \Big|\tfrac{1}{f}\nabla^2_{zv} f\Big|^2 \Big) f \right)^{\frac12} < \infty
\]
Then there exists $\Delta t_1>0$ such that for all $0 \le t \le \Delta t_1$ and every fixed $1 \leq m \leq N$
\[
\|F_{N,m}(t) - f^{\otimes m}(t)\|_{C_c(\mathcal D^m)^\star}
\le C N^{-1/2},
\]
for some constant $C$ independent of $N$, $T$, and $t$.\\

\item Assume that the interaction kernel satisfies
\[
K \in W^{1,\infty}(\Omega;\mathbb R^d)\cap H^2(\Omega;\mathbb R^d),
\qquad
K*f \in L^\infty(0,T; W^{3,\infty}(\Omega ; \mathbb{R}^d)),
\]
and that the mean-field density $f$ satisfies
\[
\int_0^T \left( \int_{\mathcal D} \Big( |\nabla_v \log f|^2 
+ \Big|\tfrac{1}{f}\nabla^2_{zv} f\Big|^2 
+ \Big|\tfrac{1}{f}\nabla^3_{zzv} f\Big|^2 \Big) f \right)^{\frac12} < \infty.
\]
Then there exists $\Delta t_2>0$ (possibly smaller than $\Delta t_1$) such that for all $0 \le t \le \Delta t_2$ and every fixed $1 \leq m \leq N$
\[
\|F_{N,m}(t) - f^{\otimes m}(t)\|_{C_c(\mathcal D^m)^\star}
\le C N^{-1},
\]
for some constant $C$ independent of $N$, $T$, and $t$. \\

\end{enumerate}

\end{theorem}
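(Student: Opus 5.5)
The plan is to work in duality. Fix $1\le m\le N$, a test function $\phi_m\in C_c(\mathcal D^m)$ and a time $t$, and solve the backward (dual) Liouville equation on $[0,t]$ with final datum $\Phi_N(t)=\phi_m\otimes 1^{\otimes(N-m)}$. By the weak duality formulation of Appendix A,
\[
\int F_{N,m}(t)\phi_m=\int (f^0)^{\otimes N}\Phi_N(0).
\]
I will expand $\Phi_N(s)$ into dual cumulants $\Phi_{N,k}$, $k\le N$, indexed by subsets of particles. These are defined through the weighted projections associated with $f^{\otimes N}(s)$, namely integration against $f$ in the removed variables. With this choice, $\int F_{N,m}\phi_m-\int f^{\otimes m}\phi_m$ is controlled by the cumulants of order $k\ge1$ evaluated at $s=0$, since at time $0$ the data are exactly tensorized. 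Differentiating $\int f^{\otimes N}(s)\Phi_N(s)$ in $s$ and using the Vlasov equation for $f$ gives the error terms. After integrating by parts in $v_i$, these are of the form
\[
\frac{1}{N-1}\sum_{j\neq i}\big(K(x_i-x_j)-K*f(x_i)\big)\cdot\nabla_{v_i}\log f(z_i)\,\Phi_N .
\]
The log-derivative weights produce exactly the quantities $\int|\nabla_v\log f|^2f$ that appear in the hypotheses.

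Next I will derive the hierarchy satisfied by the dual cumulants. Each $\Phi_{N,k}$ solves a transport equation along the mean-field characteristics with force $K*f$. Its sources come from $\Phi_{N,k-1}$, with a factor $O(1)$ times the centered interaction $K(x_i-x_j)-K*f(x_i)$, and from $\Phi_{N,k+1}$, with a factor $O(k/N)$. I will estimate the cumulants in weighted $L^2(f^{\otimes k})$ norms scaled by $N^{k/2}$. The centered kernel has mean zero against $f(z_j)$, so its $L^2(f)$ norm is finite under $K\in L^4+L^\infty$ together with $f\in L^\infty$. Closing the scaled hierarchy by an energy estimate gives a system of the form $\frac{d}{ds}a_k\lesssim k(a_{k-1}+a_{k+1})$. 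This geometric-type system is summable only over a short time interval, which is the origin of $\Delta t_1$. The resulting bound $\|\Phi_{N,k}\|\lesssim C^kN^{-k/2}$, for $k\ge1$, gives part (1), with the rate $N^{-1/2}$ coming from the first cumulant. The second-order quantities $\frac1f\nabla^2_{zv}f$ and $K\in H^1$ enter because the commutator with the transport requires one derivative of the cumulants.

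For part (2) I will iterate. The leading contribution $\Phi_{N,1}$ at time $0$ integrates against $f^0$ to a quantity whose only source is the second cumulant $\Phi_{N,2}$, weighted by $1/N$, plus $\Phi_{N,0}$ tested against a centered kernel. The latter vanishes after integration against $f$, but only up to a term that needs one more derivative. Feeding the part (1) bound $\Phi_{N,2}=O(N^{-1})$ back into the equation for $\Phi_{N,1}$, tested against $f$, upgrades the estimate to $O(N^{-1})$. To do this I must control derivatives of the cumulants. That is the reason for $K\in W^{1,\infty}\cap H^2$, $K*f\in W^{3,\infty}$, and the third-order weight $\frac1f\nabla^3_{zzv}f$, and the extra derivative losses shrink the time interval to $\Delta t_2$.

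The main obstacle is closing the hierarchy uniformly in $N$ and $k$. The coupling to $\Phi_{N,k+1}$ and the derivative loss from the $\nabla_v$ acting in the log-weight mean the system does not close by Gronwall. I would first try a Cauchy–Kovalevskaya-type weighted sum $\sum_k \lambda(s)^kN^{k/2}\|\Phi_{N,k}\|$ with a decreasing radius $\lambda(s)$, which is exactly what forces the short time $\Delta t$.
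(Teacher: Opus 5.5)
Your proposal has genuine gaps, beginning with the reduction of the error. The cumulants are taken relative to $f(s)$, and $F_N(0)=(f^0)^{\otimes N}$, so every cumulant of order $k\ge1$ integrates to zero against $(f^0)^{\otimes k}$. Hence $\int F_{N,m}(t)\phi_m$ is just the constant cumulant at time $0$, and the error is the variation of that constant over $[0,t]$. It is not ``controlled by the cumulants of order $k\ge1$ at $s=0$''. Pair your (correct) error term $\frac{1}{N-1}\sum_{j\neq i}V_f(z_i,z_j)\,\Phi_N$ with the cumulant expansion and use $\int V_f(z_1,z_2)f(z_j)\,dz_j=0$ for $j=1,2$: every cumulant except the second one drops out, and
$\int\psi^{\otimes m}\bigl(F_{N,m}(T)-f^{\otimes m}(T)\bigr)=-\frac{N}{\sqrt{\binom{N}{2}}}\int_0^T\!\int V_f\,\bar C_{N,2}\,f^{\otimes 2}\,dt$.
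So the first cumulant plays no role. Your part (2) mechanism, which tests $\Phi_{N,1}$ against $f^0$, gives zero by your own cancellation condition. What you must show is that $\bar C_{N,2}$ is $O(N^{-1/2})$, resp.\ $O(N^{-1})$, in a norm dual to $V_f f^{\otimes2}$, not necessarily in $L^2$. Also, the smallness $\|\bar C_{N,n}(T)\|_{L^2_f}\lesssim N^{-n/2}$ of the final data comes from symmetrizing the test function as $\binom{N}{m}^{-1}\sum_{i_1<\dots<i_m}$. With $\phi_m\otimes1^{\otimes(N-m)}$, the cumulants on subsets of $\{1,\dots,m\}$ are of order one.

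The more serious gap is closing the hierarchy. In rescaled variables, the couplings of size $O(n)$ uniformly in $N$ are to $\bar C_{N,n}$ itself and to $\bar C_{N,n+2}$, through $V_f$ integrated against $f$, and they carry no derivative. The couplings to $\bar C_{N,n\pm1}$ are only $O(n^{3/2}N^{-1/2})$, but they contain $K(x_i-x_j)\cdot\nabla_{v_i}\bar C_{N,n-1}$ and $\int K(x_i-x_\star)\cdot\nabla_{v_i}\bar C_{N,n+1}(z_{[n]},z_\star)f(z_\star)\,dz_\star$. There the derivative falls on another unknown in a retained variable; the $\nabla_v$ in the log-weight is harmless by comparison. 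Such terms can neither be absorbed by an $L^2$ energy estimate nor moved onto $f$. Bounding them in $L^2$ needs $\nabla\bar C_{N,n\pm1}$, then $\nabla^2\bar C_{N,n\pm2}$, and so on. A Cauchy--Kovalevskaya sum in the index $k$ absorbs growth of the coefficients in $k$; that is how the paper's generating functions $\sum_n\rho^n(\cdots)$ work, and why the time interval is short. It does nothing for a loss of $z$-derivatives.

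So your claimed bound $\|\Phi_{N,k}\|\lesssim C^kN^{-k/2}$ in $L^2$ is out of reach this way. Note that it would already give the $N^{-1}$ rate by Cauchy--Schwarz under the hypotheses of part (1), which would make your part (2) discussion moot. The natural uniform $L^2_f$ information is only $O(1)$, from $\sum_n\|\bar C_{N,n}\|^2_{L^2_f}=\|h_N\|^2_{L^2(f^{\otimes N})}\le\|h_N(T)\|^2_{L^\infty}$.

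The missing idea is to work in negative Sobolev norms:
\begin{enumerate}
\item Write $\bar C_{N,n}=h^{(k)}_{N,n}+\sum_{q\le k}\mathrm{div}_{z_{i_1}\cdots z_{i_q}}w^{(k),i_1,\dots,i_q}_{N,n}$, and keep the derivative-carrying $N^{-1/2}$ terms in divergence form.
\item Commute the divergences through the mean-field transport; this uses $K*f\in W^{k+1,\infty}$.
\item Let $\mathcal J$ and $\bar{\mathcal J}$ act on $\mathrm{div}\,w$ by integrating by parts onto $fV_f$; this is where $K\in H^1$, resp.\ $H^2$, and the weights $\frac1f\nabla^2_{zv}f$, $\frac1f\nabla^3_{zzv}f$ enter.
\end{enumerate}
The $O(1)$ bound then makes the remainder $O(N^{-1/2})$ in $H^{-1}_f$, which gives part (1). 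Inserting the level-one decomposition back into the remainder makes it $O(N^{-1})$ in $H^{-2}_f$; that is the actual iteration behind part (2). Finally, $\int V_f\bar C_{N,2}f^{\otimes2}$ is bounded by integrating the divergences by parts onto $V_f f^{\otimes 2}$.
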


\begin{remark}
The constant \(C\) in Theorem \ref{thm1} depends on the marginal order \(m\). We simply denote this constant by \(C\) to avoid cumbersome notation.
\end{remark}
Foundational works such as \cite{dobrushin1979vlasov} and \cite{braun1977vlasov} require strong regularity assumptions on the interaction kernel, typically \(K\in W^{1,\infty}\), with the former explicitly yielding the classical convergence of the empirical measure at a rate of \(\mathcal{O}(N^{-1/2})\). By contrast, the present approach could actually reach a similar rate under a weaker assumption \(K\in H^{1}\). This \(O(N^{-1/2})\) rate for the convergence of the empirical measure is known to be optimal in general; see in particular \cite{bernou2025uniform} and \cite{wang2023gaussian}.

A comparable \(O(N^{-1/2})\) rate was obtained in~\cite{jabin2016mean}, which only assumes \(K\in L^{\infty}\) through a relative entropy approach between the full law of the particles and the tensorized mean-field limit. This applies to second--order models such as considered here, with possibly vanishing diffusion or deterministic dynamics. In that sense, this result is obtained under a weaker regularity assumption on the kernel while yielding the same empirical-measure rate. However, the \(O(N^{-1/2})\) scale also seems to be the natural limitation of that approach.

The situation is different at the level of the marginals, where additional cancellations may lead to faster convergence. A first example is provided by \cite{duerinckx2021size}, which yields \(O(N^{-1})\)-type corrections for the one-particle marginal through sharp estimates on higher-order correlation functions. 
In a different direction, \cite{lacker2023hierarchies} also obtains sharp \(O(N^{-1})\)-type marginal estimates, but requires a full and non-vanishing diffusion, which cannot hold in the present setting where we typically consider  deterministic or vanishing-diffusion dynamics.

We focus in the present work on optimal rates of convergence, which are understood to require smooth interactions. But obviously, many applications instead involve some sort of singular interactions: the most classical example being Coulombian interactions where $K(x)=\gamma\,\frac{x}{|x|^d}$. Unfortunately we have very few results proving the mean-field limits for second-order kinetic models with singular interactions. In the absence of diffusion, \cite{hauray2007n}  and \cite{hauray2015particle} treat Vlasov systems with singular interaction forces of type \(|x|^{-\alpha}\), \(\alpha<1\), and also allow stronger singularities in the presence of a cut-off. Singular interactions with  \(N\)-dependent cut-off had been considered earlier in~\cite{ganguly1989convergence,ganguly1991simulation,victory1991convergence,wollman2000approximation}. More recent works obtained substantially sharper cutoff scales; see \cite{lazarovici2016vlasov,lazarovici2017mean, huang2020mean}. Extensions to bounded, possibly discontinuous interaction forces were established in~\cite{jabin2016mean}. For second-order systems with singular interactions, the Vlasov–Poisson–Fokker–Planck equation with a polynomial cut-off was studied in~\cite{carrillo2019propagation}.  In special models such as Cucker–Smale flocking, kinetic and measure-valued methods yield mean-field convergence for certain singular kernels, see for example~\cite{mucha2018cucker}.

In the case with a non-vanishing diffusion in velocity, \cite{huang2020mean} extends the above cut-off results to stochastic dynamics. \cite{bresch2025new} provides the first derivation of the full Vlasov–Poisson–Fokker–Planck equation in higher dimensions with no truncation in the repulsive case.  However the presence of diffusion plays a crucial structural role again there, so the method cannot apply to the present deterministic or vanishing-diffusion regime.

We also mention~\cite{serfaty2020mean}, which is based on the modulated energy method, where the convergence to the deterministic Vlasov--Poisson is obtained but only in the so-called monokinetic regime.

Finally, \cite{bresch2024duality} is particularly close in terms of the approach to the present result. That work derived the mean-field limit under assumptions as weak as \(K\in L^{2}\), by means of a duality method and an analysis of dual correlations. A quantitative rate was also obtained if $K$ is smoother, namely $K\in H^s$ for some $s>0$. However, the quantitative rate remains suboptimal, and does not in general reach $N^{-1/2}$ if $K$ is not bounded. In this respect, the novelty of the present work is to take advantage of  additional structure in order to obtain sharper quantitative estimates, while keeping the regularity assumptions as low as possible.

The marginals are not expected to converge at a faster rate than $O(N^{-1})$. A convenient way to quantify further deviations from the mean-field limit is through the analysis of higher-order correlation structures, referred to in the present paper as direct cumulants. In the classical setting, these are defined as combinations of the marginals through a cluster expansion over partitions. 

There exist various notions of direct cumulants. One of the most classical ones is probably the following,  denoting by $F_N$ the joint density, 
\[
F_{N}(z_1,...,z_N)
= \sum_{\pi \in \mathcal{P}(\{1,\dots,N\})} \;\prod_{\sigma \in \pi} G_{N,|\sigma|}(z_\sigma),
\]
where the sum runs over all partitions of $\{1,\dots,N\}$, and the product is over blocks of the partition $\sigma \in \pi$, and $z_\sigma := (z_i)_{i \in \sigma}$. To guarantee uniqueness of this expansion, we set $G_{N,1}(z_1) := F_{N,1}(z_1)$ as the first marginal, and impose the following cancellation property
\[\int_{\mathcal D} G_{N,m}(z_1,\dots,z_m)\,dz_j = 0 , \quad \forall 1 \leq j \leq m \,\,\, \ \ \ m \geq2.\]
In addition, $\{G_{N,m}\}_{1 \leq m \leq N}$ can also be defined by Möbius inversion as
\[
G_{N,m}
= \sum_{\pi \in \mathcal{P}(\{1,\dots,m\})}
(|\pi|-1)!(-1)^{|\pi|-1} \prod_{\sigma \in \pi} F_N^{(|\sigma|)}.
\]
The $G_{N,m}$ correspond to some well-known physical quantities: $G_{N,1}$ is just the 1-particle distribution for example while $G_{N,2}$ measures the correlations in the system.

However, we are specifically interested here in the deviation from the mean-field limit, for which it is more convenient to express the $N$-particle distribution as a perturbation of the factorized mean--field density $f^{\otimes N}$. 
The standard marginals contain highly redundant information: for instance, if the first marginal $F_{N,1}$ is already close to $f$, then the primary object of interest is how far the higher-order marginals deviate from their corresponding tensorized profiles, such as $F_{N,2} - f^{\otimes 2}$. 
To systematically isolate these fluctuations without the combinatorial complexity of classical partition-based cumulants, we organize the joint density through a direct linear expansion.
Concretely, if $F_N$ denotes the joint density, one can consider the cluster expansion
\[
F_N(z_1,\cdots,z_N)
= f^{\otimes N}\sum_{n=0}^N \sum_{\sigma \in \mathcal{P}_n^N} \kappa_{N,n}(z_\sigma),
\]
with the convention $\kappa_{N,0} = 1$ where $\mathcal{P}_n^N$ denotes the set of all subsets of $\{1,\dots,N\}$ with $n$ elements and $z_\sigma = (z_i)_{i\in \sigma}\in \mathcal{D}^n$.
\noindent
In order for the functions $\kappa_{N,n}$ to be  uniquely defined, we impose the cancellation property
\[
\int_{\mathcal{D}} \kappa_{N,n}(z_1,\cdots,z_n)\, f(z_i)\,dz_i = 0 \quad \text{for all } 1 \le i \le n.
\]

From a physical and probabilistic perspective, both of these definitions are designed to capture correlations beyond factorization and provide a natural framework for mean--field analysis. Indeed, while the marginals $F_{N,n}$ contain both independent and correlated contributions, the direct cumulants $\kappa_{N,n}$ remove lower-order factorizations and provide a structured way to quantify correlations at each order. In particular, propagation of chaos can be characterized by the decay of $\kappa_{N,n}$ for all $n \geq 2$ as $N \to \infty$.

Throughout the paper,weighted spaces over \(\mathcal{D}^n\) are understood with respect to the product measure \(f^{\otimes n}dz_{[n]}\); in particular,
\[
L_f^2(\mathcal{D}^n):=L^2\!\left(\mathcal{D}^n,f^{\otimes n}\right),
\]
and
\[
L_f^1(\mathcal{D}^n):=L^1\!\left(\mathcal{D}^n,f^{\otimes n}\right).
\]
We now state our main result, which provides quantitative bounds on the direct cumulants $\kappa_{N,n}$ introduced above.
\begin{theorem}\label{direct_cumulants_bound}
Let $k\geq 1$ be a regularity index, and let $F_N\in L^\infty_{\mathrm{loc}}
\bigl(\mathbb{R}_+;L^1(\mathcal D^N)\cap L^\infty(\mathcal D^N)\bigr)$ be a global weak-duality solution of the Liouville equation~\eqref{eq:Liouville_equation}, in the sense of Appendix~\ref{secA1}, with initial data \(F_N^0\). Let the interaction kernel satisfy \( K \in \mathscr{R}_k(\Omega; \mathbb{R}^d)
 \cap H^{k}(\Omega; \mathbb{R}^d) \) where 
\[
\mathscr{R}_k(\Omega; \mathbb{R}^d)
 :=
\begin{cases}
L^{\infty}(\Omega; \mathbb{R}^d) + L^4(\Omega; \mathbb{R}^d) & \text{if } k = 1, \\
W^{k-1,\infty}(\Omega; \mathbb{R}^d) & \text{if } k \geq 2.
\end{cases}
\] 
Assume further that $f \in L^{\infty}_{loc}(\mathbb{R}^+; \mathcal P(\mathcal D)\cap L^\infty(\mathcal D))$ is a bounded weak solution of the Vlasov equation~\eqref{eq:vlasov_eq} with initial data \(f^0\), and that \(f>0\) almost everywhere. Assume that \(f\) satisfies $\operatorname{ess}\sup_{x_\star} \int |\nabla_{v_\star}\log f(z_\star)|^2 f(z_\star)\,dv_\star < \infty$, along with the regularity conditions
\[
\int_{0}^{T} \left( \int_{\mathcal{D}} \Big( |\nabla_{v} \log f|^{2} + \sum_{j=1}^k \Big| \tfrac{1}{f} \nabla^{j}_{z} \nabla_{v} f \Big|^{2} \Big) f \right)^{\frac{1}{2}} < \infty,
\qquad
K * f \in L^{\infty}\left(0, T; W^{k+1,\infty}(\Omega; \mathbb{R}^d)\right).
\]
Let \(\{\kappa_{N,n}\}_{0\leq n\leq N}\) denote the direct cumulants
of \(F_N\) relative to the mean-field density \(f\), as defined above. For \(0\le n\le N\), let
\begin{equation} \label{Lambda_{N,n,k}}
\Lambda_{N,n,k}
:= \max\!\left\{
\|\bar{\kappa}_{N,n}^{0}\|_{L^2_{f^0}},\,
 \sum_{j=1}^{k}
\left\| \frac{\nabla^j(f^0\bar{\kappa}_{N,n}^{0})}{f^0} \right\|_{L^2_{f^0}}
\right\},
\end{equation}
where $\bar{\kappa}_{N,n} := \binom{N}{n}^{1/2} \kappa_{N,n}$ are the rescaled direct cumulants, $\bar{\kappa}_{N,n}^0 = \bar{\kappa}_{N,n}|_{t=0}$, and $f^0 = f|_{t=0}$. Assume further that the initial data satisfies
$\sum_{n=0}^{N} C^{nk}\Lambda_{N,n,k} \lesssim 1$. Then, there exists $\Delta t_k > 0$ such that for all $0 \leq t \leq \Delta t_k$ the $k$-th order direct cumulant satisfies
\begin{equation}\label{eq:direct_L1_bound1}
\|\kappa_{N,k}(t) \|_{L^1_f}
\lesssim C_k\, N^{-k/2}.
\end{equation}
\end{theorem}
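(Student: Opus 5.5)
The plan is to derive the bound on $\kappa_{N,k}$ from the equations satisfied by the rescaled direct cumulants $\bar\kappa_{N,n}$. The key input is the duality between direct and dual cumulants, which lets the weighted $H^k$-type control of the initial data propagate over a short time interval.

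\textbf{Step 1: the hierarchy.} Substitute the expansion $F_N=f^{\otimes N}\sum_{n}\sum_{\sigma\in\mathcal P_n^N}\kappa_{N,n}(z_\sigma)$ into the Liouville equation~\eqref{eq:Liouville_equation}. Then use the Vlasov equation~\eqref{eq:vlasov_eq} to cancel the mean-field transport. Projecting with the cancellation condition $\int\kappa_{N,n}f(z_i)\,dz_i=0$ gives, for each $n$, an equation of the schematic form
\[
\partial_t\bar\kappa_{N,n}+\sum_{i\le n}\big(v_i\cdot\nabla_{x_i}+K*f\cdot\nabla_{v_i}\big)\bar\kappa_{N,n}=\mathcal L^{0}_n\bar\kappa_{N,n}+\mathcal L^{-}_n\bar\kappa_{N,n-1}+\mathcal L^{+}_n\bar\kappa_{N,n+1}+\mathcal L^{--}_n\bar\kappa_{N,n-2}.
\]
The operators come from two sources. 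One is the fluctuation $K(x_i-x_j)-K*f(x_i)$. The other is the terms $\frac1f\nabla_v f$ produced when $\nabla_{v_i}$ falls on $f^{\otimes N}$. The choice of rescaling $\binom Nn^{1/2}$ is meant to balance the combinatorial factors: $\mathcal L^{\pm}_n$ then carry coefficients of order $\sqrt{n}\,$ or $n/\sqrt N$, and $\mathcal L^{--}_n$ carries an explicit factor $N^{-1/2}$ per step down. I would verify this scaling carefully; this is the same hierarchy that is dual to the one on dual cumulants used for Theorem~\ref{thm1}.

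\textbf{Step 2: weighted energy estimates.} Estimate $\frac{d}{dt}$ of the norms
\[
\|\bar\kappa_{N,n}\|_{L^2_f}^2+\sum_{j\le k}\Big\|\tfrac{\nabla^j(f\bar\kappa_{N,n})}{f}\Big\|_{L^2_f}^2 .
\]
The transport part is antisymmetric in $L^2_f$, since $f$ is transported by the same field. Commutators with $\nabla^j$ are controlled by $K*f\in W^{k+1,\infty}$. The source terms are handled as follows.
\begin{itemize}
\item Terms with $K(x_i-x_j)$ integrated against $f(z_j)$ are controlled by Cauchy–Schwarz and $K\in H^k$, using the $v$-marginal bound $\operatorname{ess\,sup}_x\int|\nabla_v\log f|^2f\,dv<\infty$.
\item Terms with $K(x_i-x_j)$ left unintegrated are controlled by $K\in\mathscr R_k$: for $k=1$ this means $L^4+L^\infty$ together with the $L^\infty$ bounds on $f$ and $F_N$; for $k\ge2$ it means $W^{k-1,\infty}$.
\item The $\nabla^j_z\nabla_v f/f$ factors are integrable in time by the stated assumption.
\end{itemize}
This yields a linear differential inequality
\[
\dot E_n\le a(t)\big(nE_n+\sqrt n\,E_{n\pm1}+\dots\big),\qquad a\in L^1(0,T).
\]

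\textbf{Step 3: closing the system and extracting the rate.} Sum the inequalities with weights $C^{-nk}$ and use Grönwall. The assumption $\sum_n C^{nk}\Lambda_{N,n,k}\lesssim1$ is designed to make this weighted sum finite initially. The growth in $n$ of the coefficients is exactly why only a short time $\Delta t_k$ is available: this is a Cauchy–Kovalevskaya-type argument in which the weight $C^{nk}$ decays over time. The outcome is
\[
\|\bar\kappa_{N,n}(t)\|_{L^2_f}\lesssim C^{nk}\qquad\text{uniformly in }N.
\]

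\textbf{Step 4: the main obstacle.} The bound from Step 3 only gives $\|\kappa_{N,k}\|\lesssim\binom Nk^{-1/2}\sim N^{-k/2}$ if the initial cumulants are themselves small. Under general initial data one must instead exploit the fact that $\kappa_{N,k}$ is generated from $\kappa_{N,0}=1$ through $k$ applications of the lowering-order sources $\mathcal L^{-}$ or $\mathcal L^{--}$, each costing $N^{-1/2}$. Each such application consumes one derivative of $f$ or $K$, which is the reason for the $H^k$ and $k$-th order regularity requirements.

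I would implement this by an iteration on $k$, mirroring the dual-cumulant iteration behind Theorem~\ref{thm1}. Assume the $L^2_f$ bounds at order $k-1$ with one extra derivative. Insert them into the Duhamel formula for $\bar\kappa_{N,k}$ to gain a further factor $N^{-1/2}$ at the cost of one derivative, giving an $L^2_f$ bound of order $N^{-k/2}$ for $\kappa_{N,k}$. Since $L^2_f\subset L^1_f$ for a probability weight, this yields~\eqref{eq:direct_L1_bound1}.

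The delicate point is controlling the top-order derivative losses at each stage of this iteration, so that the $k$ derivatives suffice; that is where I expect most of the work.
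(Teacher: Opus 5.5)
\textbf{There is a genuine gap.} Your route is forward weighted $H^k$ energy estimates on a hierarchy for $\bar\kappa_{N,n}$. This is not the paper's route, and as written it does not close.

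\textbf{Step 2 fails.} After projection onto cumulant levels, the $N$-particle terms $\frac{1}{N-1}K(x_i-x_j)\cdot\nabla_{v_i}$ feed first derivatives of $\bar\kappa_{N,n\pm1}$ into the equation for $\bar\kappa_{N,n}$, with prefactors carrying $N^{-1/2}$. These are not transport terms at level $n$, so there is no antisymmetry to exploit. The $j$-th order energy at level $n$ therefore requires $j+1$ derivatives at the neighbouring levels. This loss cascades through all $n\le N$ and cannot be absorbed at fixed regularity $k$.

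There is a second problem. Commuting $\nabla^k$ with the unintegrated $K(x_i-x_j)\cdot\nabla_{v_i}$ produces $\nabla^kK(x_i-x_j)\,\nabla_{v_i}\bar\kappa_{N,n}$. This is not in $L^2_f$ when $K\in W^{k-1,\infty}\cap H^k$. Also, $\|F_N\|_{L^\infty}$ is not uniform in $N$. The commutators with $K*f$ are the harmless ones.

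\textbf{Step 3 proves too much.} Its conclusion $\|\bar\kappa_{N,n}(t)\|_{L^2_f}\lesssim C^{nk}$ for all $n$ would give $\kappa_{N,n}=O(N^{-n/2})$ at every order using only $k$ derivatives. That is far beyond the theorem: at regularity $k$ the paper's argument yields only $\|\bar\kappa_{N,n}(T)\|_{L^1_f}\lesssim C_kN^{(n-k)/2}$ for $n>k$.

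\textbf{Step 4 misreads the target.} The hypothesis $\sum_nC^{nk}\Lambda_{N,n,k}\lesssim1$ already means $\kappa^0_{N,n}=O(N^{-n/2})$. So what must be shown is only that $\|\bar\kappa_{N,k}\|_{L^1_f}$ stays $O(1)$; there is no extra $N^{-1/2}$ to gain. Moreover, in rescaled variables the $V_f$-coupling $\bar\kappa_{N,n-2}\to\bar\kappa_{N,n}$ is not small in $N$. It is the adjoint of the $\sqrt{(n+1)(n+2)}$ term in \eqref{eq:hierarchy_equation}.

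\textbf{The missing idea is duality.} The paper never writes a forward hierarchy. It chooses terminal dual data
$\bar C_{N,n}(T)=\mathbbm{1}_{n\ge k}N^{-n/2}P_n\operatorname{sgn}(\bar\kappa_{N,n}(T))$, where $P_n=\prod_i(\mathrm{Id}-\Pi_i)$ and $\Pi_i$ is the $f$-average in $z_i$. This data has three properties:
\begin{itemize}
\item it satisfies the cancellation condition;
\item it keeps $\|h_N(T)\|_{L^\infty}\lesssim\sum_n2^n(n!)^{-1/2}$, which is what forces the factor $N^{-n/2}$;
\item it fits \eqref{finaldatadual} with $k_0=k$.
\end{itemize}
Theorem~\ref{thm2} then gives, for $T\le\Delta t_k$, a divergence decomposition of $\bar C_{N,n}(0)$ of $H^{-k}_f$-size $C_kC^{nk}N^{-k/2}$.

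Integrating by parts on the right of \eqref{eq:direct_backward_rescaled} moves the $k$ derivatives onto $f^0\bar\kappa^0_{N,n}$, producing exactly $\Lambda_{N,n,k}$. Since $P_n\bar\kappa_{N,n}=\bar\kappa_{N,n}$, the left side equals $\sum_{n\ge k}N^{-n/2}\|\bar\kappa_{N,n}(T)\|_{L^1_f}$, and the term $n=k$ gives the claim.

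So the derivative losses you flag are handled backward. Derivatives are pushed outside into divergence form (Lemmas~\ref{lem:one_particle_k}--\ref{lemma2}), and the order-$k$ remainder is controlled through the order-$(k-1)$ decomposition. This is the dual of losing $k$ derivatives between $\Lambda_{N,n,k}$ and an $L^1_f$ bound at time $T$. That is why the conclusion holds in $L^1_f$ and only at level $k$.
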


\begin{remark}
Unlike the previous result, Theorem \ref{direct_cumulants_bound} does not require tensorized initial data. Instead, we assume only the initial weighted cumulant bound above.
\end{remark}

There exist only a few results providing higher-order estimates on correlation or cumulant structures. For classical combinatorial cumulants, denoted here as $G_{N,k}$, the optimal scaling of the \(k\)-particle correlation, namely \(G_{N,k}=O(N^{1-k})\), has been obtained in \cite{duerinckx2021size} and \cite{hess2025higher}. \cite{hess2025higher} inherently relies on a non-degenerate, non-vanishing diffusion, but only requires bounded interaction kernels.  
To the best of our knowledge, the only article prior to the present work capable of handling entirely deterministic dynamics ($\sigma=0$) is~\cite{duerinckx2021size}. \cite{duerinckx2021size} demands a strong regularity, namely $K\in W^{k+1,\infty}$ to obtain \(G_{N,k}=O(N^{1-k})\) in some weak Sobolev norm. 

Our estimate for the direct cumulant $\kappa_{N,k}$ only requires the weaker assumption \(K\in W^{k-1,\infty}\cap H^k\), at the expense of yielding the slower rate \(\kappa_{N,k} = \mathcal{O}(N^{-k/2})\). 
However, this slower rate is the critical one to understand fluctuations, through a Central Limit Theorem (CLT) for example. 
Moreover, we only require this same $\mathcal{O}(N^{-k/2})$ scaling to hold at the initial time $t=0$, whereas~\cite{duerinckx2021size} considers fully tensorized initial data, which could likely be relaxed but not to \( \kappa_{N,k} = O(N^{-k/2})\).

We note that the rate $\kappa_{N,k} = \mathcal{O}(N^{-k/2})$ appears to be a fundamental feature of our method, because we propagate estimates directly on the so-called dual cumulants, the structure of the dual hierarchy naturally hits a wall at this fluctuation scale for the $\kappa_{N,k}$.
This scaling directly mirrors the rate obtained in the recent work~\cite{duerinckx2025correlation}, which requires non-degenerate diffusion and only $K\in L^2$. 
Because the diffusion is non-vanishing in~\cite{duerinckx2025correlation}, their analytical approach is entirely different; however, their framework starts by establishing rates on the $\kappa_{N,k}$ before leveraging them to derive the classical rates on $G_{N,k}$. 
This suggests that the optimal rate of convergence for the direct cumulants $\kappa_{N,k}$ is inherently limited to $\mathcal{O}(N^{-k/2})$, instead of the faster rate on the $G_{N,k}$.

Cumulant methods also play an important role in other settings. In the derivation of the Lenard--Balescu equation, \cite{duerinckx2021lenard} used a cumulant hierarchy to derive the first nontrivial correction and to identify it with the Lenard--Balescu operator. In the Boltzmann setting, related cumulant and cluster-expansion ideas have proved critical in the derivation of the Boltzmann equation; see, for instance, the long-time equilibrium result \cite{bodineau2024long} and the recent long-time derivation from hard-sphere dynamics in \cite{deng2024long}.

 We emphasize that the mean-field limit is much better understood for so-called first-order systems, with several recent breakthroughs.  \cite{serfaty2020mean} developed a modulated-energy method to obtain quantitative mean-field convergence for Coulomb and more general deterministic Riesz-type flows, exploiting weak--strong stability estimates to control the distance between the empirical measure and the limiting density. For the 2D Navier--Stokes system, early propagation-of-chaos results go back to~\cite{osada1986propagation} and were later strengthened in~\cite{fournier2014propagation}. \cite{jabin2018quantitative} introduced a relative entropy method yielding the quantitative propagation of chaos for the 2D Navier-Stokes. Uniform-in-time propagation of chaos for the 2D vortex model and related singular stochastic systems was subsequently obtained in~\cite{guillin2024uniform}. The relative entropy method was later combined with the modulated-energy method in~\cite{bresch2023mean}, providing a framework adapted to singular attractive kernels with diffusion and yielding quantitative convergence to Keller--Segel. \cite{nguyen2022mean} extended the modulated-energy method to general Riesz flows with possible multiplicative transport noise, while \cite{rosenzweig2023global} established global-in-time quantitative convergence for singular Riesz-type diffusive dynamics while the so-called $\log$ gas is thoroughly treated in any dimension in~\cite{chodron2025attractive}. We also mention the classical mean-field limits toward 2D Euler and related flows that were established in~\cite{goodman1990convergence,goodman1991new,schochet1995weak,schochet1996point}. 

 As already mentioned, our work is closest to~\cite{bresch2024duality}. Instead of directly studying the propagation of the tensorized structure of the $N$-particle density $F_N$ solving the Liouville equation, that work reformulates the problem through a dual Liouville equation evolving backward in time. The dual equation is initialized with observables having a linear structure, and the mean-field limit problem is then reformulated as the study of how this linear structure propagates under the dual structure.

The advantage of this reformulation is that deviations from linearity can be naturally measured through linear correlation functions associated with the dual equation. These quantities behave more favorably than the usual nonlinear correlations appearing in classical propagation-of-chaos formulations. In particular, they allow the use of Hilbertian techniques and energy-type estimates to obtain a priori bounds. The resulting quantities satisfy a BBGKY-type hierarchy of equations, and the analysis of this hierarchy becomes tractable because the potentially singular derivative losses only appear in perturbative terms that vanish in the mean-field limit.

Compared with~\cite{bresch2024duality}, the present work differs in three main respects. First, while~\cite{bresch2024duality} establishes mean-field convergence at rate \(N^{-1/2}\) under \(H^1\cap L^\infty\)-type assumptions on the interaction kernel, we only require \(K\in H^1\cap L^4\) at that level, and under the stronger assumption \(K\in H^2\cap W^{1,\infty}\) our approach recovers the optimal rate \(N^{-1}\) for the convergence of the marginals to the mean-field limit. Second, under the more general assumption \(K\in H^k\cap W^{k-1,\infty}\), we show that the duality estimates can be iterated beyond the first levels: the remainder term arising at order \(k\) is analyzed by using the decomposition already obtained at order \(k-1\). In this way, each new step of the argument gains an additional factor \(N^{-1/2}\), at the expense of one extra derivative assumption on \(K\), leading to bounds of order \(N^{-k/2}\) for the dual cumulants. This iteration process is one of the main improvements and novelties in the paper as it shows how to capture higher-order cumulant structures of the \(N\)-particle law, and not only the leading mean-field behavior itself. Finally, these estimates are then transferred through the weak duality framework of~\cite{bresch2024duality}, where the backward hierarchy is constructed in a suitable weighted Hilbert space and the forward \(N\)-particle evolution is encoded by weak duality solutions; the resulting pairing identity is precisely what relates the dual cumulants to the marginals. For \(k=1\) and \(k=2\), this relation provides quantitative bounds on the convergence of the marginals to the mean-field limit. For higher \(k\), the bounds on the dual cumulants can still be converted into corresponding bounds on the direct cumulants through the relation between the two notions.

We also note that our results are valid only for some fixed time interval of order $1$. This is not unusual with methods loosely based on Cauchy-Kovalevski arguments, such as is the case here to control a hierarchy of differential equations. A natural question is whether these local-in-time optimal estimates can be extended to arbitrary times. Unfortunately, this does not appear to be straightforward in the present case, and we do not have for example the equivalent of Lemma~2.1 in~\cite{duerinckx2025correlation} as the structure of the bounds that we can propagate is different.

\section{Dual Cumulants and Mean-Field Reformulation}\label{sec2}

In this section, we revisit the framework developed in \cite{bresch2024duality}, with particular emphasis on the notion of dual cumulants and their relation to the mean--field limit. Only the identities and equations needed in the present work are stated here; further details and proofs can be found in \cite{bresch2024duality}.

\medskip

\noindent
The following proposition was established in \cite{bresch2024duality}. We state it here in a form suitable for our purposes and provide a brief indication of the proof strategy, referring to Proposition 4 in \cite{bresch2024duality} for the complete proof.
\medskip

\begin{proposition}
Given a global weak duality solution $F_N$ of the Liouville equation~\eqref{eq:Liouville_equation}, as in Appendix A, with \(f^0\)--chaotic initial data~\eqref{eq:chaotic}, \(T >0\), \(m \geq 1\), and \(\psi \in C_c^{\infty}(\mathcal{D})\). Let \(h_N \in L^{\infty}\left([0,T] \times \mathcal{D}^N \right)\) be a bounded weak solution to the backward Liouville equation~\eqref{eq:backward_liouville}, with final data 

\begin{equation}\label{final_data_on_h_N}
    h_N(z_1,...,z_N)|_{t=T} = {N \choose m}^{-1} \sum_{1 \leq i_1 < \cdots<i_m \leq N} \psi(z_{i_1})\cdots \psi(z_{i_m}) \ .
\end{equation}
In addition, assume that the mean--field solution \(f\) satisfies \(K * f \in L^{\infty} \left([0,T] \times \Omega \right)\) and \(\nabla_v f \in L^1 \left([0,T] \times \mathcal{D} \right)\). Then the following identity holds 

\begin{equation}\label{prop_identity}
    \int_{\mathcal{D}^m} \psi^{\otimes m} \left(F_{N,m}(T) - f(T)^{\otimes m} \right) = -N \int_0^T \left(\int_{\mathcal{D}^N} V_f(z_1,z_2)h_N f^{\otimes N} \right)dt \ ,
\end{equation}
where we denote 

\begin{equation}\label{V_f_def}
V_{f}(z,z')
= \big( K(x-x') - K \ast f(x) \big)
\cdot \nabla_{v} \log f(z).
\end{equation}
\end{proposition}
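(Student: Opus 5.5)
The identity will follow from the weak duality pairing between the forward Liouville equation and the backward Liouville equation, tested on $F_N - f^{\otimes N}$. The first step uses the definition of a global weak duality solution in Appendix A: for a bounded weak solution $h_N$ of the backward equation~\eqref{eq:backward_liouville}, the map $t\mapsto \int_{\mathcal D^N} h_N(t) F_N(t)$ is constant on $[0,T]$. At $t=T$ the final data~\eqref{final_data_on_h_N} is a symmetric average over $m$-subsets, and $F_N$ is symmetric because the initial data is tensorized and the dynamics are exchangeable. Hence $\int h_N(T)F_N(T) = \int \psi^{\otimes m}F_{N,m}(T)$. Likewise $\int h_N(T) f(T)^{\otimes N} = \int \psi^{\otimes m} f(T)^{\otimes m}$, since $f$ is a probability density. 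At $t=0$ the chaotic assumption~\eqref{eq:chaotic} gives $F_N(0)=f(0)^{\otimes N}$, so
\[
\int_{\mathcal D^m}\psi^{\otimes m}\big(F_{N,m}(T)-f(T)^{\otimes m}\big) = -\Big[\int_{\mathcal D^N} h_N f^{\otimes N}\Big]_{t=0}^{t=T}\cdot(-1) - 0 ,
\]
that is, the left-hand side equals $\int h_N f^{\otimes N}\big|_{t=T} - \int h_N f^{\otimes N}\big|_{t=0}$ with a sign to be tracked. The problem therefore reduces to computing $\frac{d}{dt}\int h_N f^{\otimes N}$.

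The second step is this computation. The tensor product $f^{\otimes N}$ solves the product of Vlasov equations, with force $K*f(x_i)$ acting on particle $i$. This differs from the Liouville equation by the operator
\[
\sum_i \Big(\frac{1}{N-1}\sum_{j\neq i}K(x_i-x_j)-K*f(x_i)\Big)\cdot\nabla_{v_i}.
\]
Pairing with $h_N$, which solves the backward Liouville equation, all transport terms cancel after integration by parts; the vector fields are divergence free in $v$, since $K(x_i-x_j)$ does not depend on $v_i$. What remains is
\[
\frac{d}{dt}\int h_N f^{\otimes N} = \int h_N\sum_i\Big(\frac{1}{N-1}\sum_{j\ne i}K(x_i-x_j)-K*f(x_i)\Big)\cdot \nabla_{v_i}\log f(z_i)\, f^{\otimes N},
\]
with the overall sign fixed by the orientation conventions. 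This equals $\frac{1}{N-1}\sum_{i\neq j}\int V_f(z_i,z_j)h_Nf^{\otimes N}$. The final data is symmetric and the backward equation is exchangeable, so $h_N(t)$ is symmetric for all $t$. Each of the $N(N-1)$ ordered pairs therefore contributes equally, and the sum collapses to $N\int V_f(z_1,z_2)h_N f^{\otimes N}$. Integrating in time from $0$ to $T$ and fixing the sign gives~\eqref{prop_identity}.

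The main obstacle is justifying the time differentiation and the integration by parts at the low regularity available. Here $h_N$ is only a bounded weak solution, and $f$ is only a weak solution with $\nabla_v f\in L^1$. I would mollify $f$ (or use $f$ itself as a test function in the weak formulation of the backward equation), and use $K*f\in L^\infty$ and $\nabla_v f\in L^1$ to make the terms $K*f\cdot\nabla_v f\, h_N$ integrable. The terms $K(x_i-x_j)\cdot\nabla_{v_i}f\, h_N$ require the integrability already encoded in the weak duality framework of Appendix A. Passing to the limit in the mollification then gives the identity; this is exactly the argument of Proposition 4 in \cite{bresch2024duality}, which I would follow.
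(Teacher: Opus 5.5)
Your argument is essentially the paper's own proof, which defers to Proposition 4 of \cite{bresch2024duality}: weak duality with the chaotic initial data, symmetric final data and symmetry of $h_N$, and the computation of $\frac{d}{dt}\int h_N f^{\otimes N}$ by integration by parts, justified through an approximation argument. The only slip is the sign in your first display: the left-hand side equals $\int h_N f^{\otimes N}\big|_{t=0}-\int h_N f^{\otimes N}\big|_{t=T}$, not the reverse, and combined with your correctly signed formula $\frac{d}{dt}\int h_N f^{\otimes N}=N\int V_f(z_1,z_2)\,h_N\, f^{\otimes N}$ this gives exactly the factor $-N$ with no further sign adjustment.
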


\begin{proof}
Using the duality relation between \(h_N\) and \(F_N\) along with the final condition~\eqref{final_data_on_h_N} for \(h_N\), and the \(f^0\)-chaotic initial data~\eqref{eq:chaotic} for \(F_N\), also recalling that $h_N$ is symmetric in its $N$ variables, and using an approximation argument for Vlasov equation~\eqref{eq:vlasov_eq} and the backward Liouville equation~\eqref{eq:backward_liouville} for \(h_N\) yields the result.
\end{proof}

We also notice that by definition~\eqref{V_f_def}, \(V_f\) satisfies the following cancellation property 
\begin{equation}\label{cancellation_prop_V_f}
\int_{\mathcal{D}} V_f(z_1,z_2)\, f(z_j)\, dz_j = 0,
\qquad j=1,2.
\end{equation}

Following the dual correlation expansion introduced in \cite{bresch2024duality}, one can decompose the observable $h_N$ into contributions depending on subsets of particles, uniquely characterized by cancellation conditions with respect to the density $f$. We adopt this structure and reformulate it in terms of dual cumulants.

We now define the dual cumulants associated with an $N$--particle observable. Let $T>0$ and let $h_N \in L^\infty\big([0,T]\times \mathcal D^N\big).$
Assume that $f$ is a weak solution of the mean--field Vlasov equation~\eqref{eq:vlasov_eq}. The dual cumulants associated with $h_N$ are denoted by
\[
\big\{C_{N,n}\big\}_{0\le n\le N},
\]
where for each $n$, $C_{N,n}$ is a function of $n$ variables, with the convention $C_{N,n} \equiv 0$ for $n >N$.

The defining property of the dual cumulants is that they allow one to reconstruct $h_N$ as a sum over all subconfigurations of particles. More precisely, $h_N$ admits the decomposition
\begin{equation}\label{eq:dual_cumulant_expansion}
h_N(z_1,\dots,z_N)
=
\sum_{n=0}^N \ \sum_{\sigma\in\mathcal P_n^N}
C_{N,n}(z_\sigma),
\end{equation}
where $\mathcal P_n^N$ denotes the collection of all subsets of $\{1,\dots,N\}$ with $n$ elements, and $z_\sigma=(z_i)_{i\in\sigma}\in\mathcal D^n$. 

Since $h_N$ is bounded and symmetric in its $N$ variables, the dual cumulants $C_{N,n}$ are bounded and symmetric in their $n$ variables. By symmetry, the ordering of the variables in $z_\sigma$ is irrelevant.

The decomposition \eqref{eq:dual_cumulant_expansion} is not unique in general. To fix this ambiguity, we impose a cancellation condition with respect to the density $f$. Specifically, for every $n\ge1$, the dual cumulant $C_{N,n}$ is required to satisfy
\begin{equation}\label{eq:dual_cumulant_cancellation}
\int_{\mathcal D} C_{N,n}(z_1,\dots,z_n)\, f(z_j)\,dz_j = 0,
\qquad \forall\, 1\le j\le n.
\end{equation}

By definitions~\eqref{eq:dual_cumulant_expansion} and~\eqref{eq:dual_cumulant_cancellation} above and the cancellation property~\eqref{cancellation_prop_V_f} of \(V_f\), only the second--order dual cumulant contributes to the relation with \(h_N\), and we obtain the following identity
\begin{equation}\label{h_N_C_N,n_relation}
N\int_0^T \!\left( \int_{\mathcal{D}^N} V_f(z_1,z_2)\, h_N\, f^{\otimes N} \right) dt
=
N\int_0^T \!\left( \int_{\mathcal{D}^2} V_f(z_1,z_2)\, C_{N,2}\, f^{\otimes 2} \right) dt.
\end{equation}

Combining \eqref{prop_identity} and \eqref{h_N_C_N,n_relation} yields the following relation between propagation of chaos and the second--order dual cumulant

\begin{equation}\label{prop_cumulants}
     \int_{\mathcal{D}^m} \psi^{\otimes m} \left(F_{N,m}(T) - f(T)^{\otimes m} \right) = -N\int_0^T \!\left( \int_{\mathcal{D}^2} V_f(z_1,z_2)\, C_{N,2}\, f^{\otimes 2} \right) dt.
\end{equation}

\subsection{Truncated Rescaled Hierarchy}

Following the previous construction, we consider a bounded weak solution $h_N \in L^{\infty}([0,T] \times \mathcal{D}^{N})$ to the backward Liouville equation~\eqref{eq:backward_liouville}, taken in duality with $F_{N}$. Let $\{C_{N,n}\}_{0 \le n \le N}$ denote the associated dual cumulants introduced above.

In order to derive quantitative estimates, we introduce a rescaled version of these cumulants, denoted by $\bar{C}_{N,n}$, defined by
\[
\bar{C}_{N,n} := {N \choose n}^{\frac{1}{2}} C_{N,n},
\]
following the normalization introduced in \cite{bresch2024duality}.
These rescaled quantities satisfy a closed hierarchy. 
\medskip

\noindent

We now recall the rescaled hierarchy framework developed in \cite{bresch2024duality}, satisfied by $\bar{C}_{N,n}$ in truncated form. Here, the truncation consists in isolating the leading interaction terms, while grouping the remaining contributions into a remainder term $R_{N,n}$. We use the notation $z_{[n]} := (z_1,\dots,z_n)$.
\begin{align}\label{eq:hierarchy_equation}
&\partial_{t} \bar{C}_{N,n} - L_{n} \bar{C}_{N,n} = R_{N,n} 
+ \sum_{j=1}^{n} \int_{\mathcal{D}} V_{f}(z_{\star}, z_{j})\, \bar{C}_{N,n}(z_{[n] \setminus \{j\}}, z_{\star})\, f(z_{\star})\, dz_{\star} \notag \\
&\quad + \sqrt{(n+1)(n+2)} \int_{\mathcal{D}^2} V_{f}(z_{n+1}, z_{n+2})\, \bar{C}_{N,n+2}(z_{[n+2]})\, f(z_{n+1}) f(z_{n+2})\, dz_{n+1} dz_{n+2},
\end{align}
where \( L_n \) is the transport operator associated with the mean-field flow, defined by
\begin{align}\label{eq:operator_definition}
L_n g := -\sum_{i=1}^{n} \left( v_i \cdot \nabla_{x_i} + (K \ast f)(x_i) \cdot \nabla_{v_i} \right) g
,\end{align}
and the remainder term $R_{N,n}$ is given by
\begin{align}\label{remainder}
R_{N,n} :=\;& 
\frac{1}{N-1} \left( \frac{N - n + 1}{n} \right)^{1/2} S_{N}^{n,+} \bar{C}_{N,n-1} 
+ \tilde{S}_{N}^{n,\circ} \bar{C}_{N,n} + \left( \frac{n + 1}{N - n} \right)^{1/2} S_{N}^{n,-} \bar{C}_{N,n+1} \notag \\
&+ \sqrt{(n+1)(n+2)} 
\left( \left( \frac{(N - n)(N - n - 1)}{(N - 1)^2} \right)^{1/2} - 1 \right) \notag \\
&\quad \times \int_{\mathcal{D}^2} V_f(z_{n+1}, z_{n+2})\, \bar{C}_{N,n+2}(z_{[n+2]})\, f(z_{n+1})\, f(z_{n+2})\, dz_{n+1} dz_{n+2},
\end{align}
where we denote
\begin{align*}
S_N^{n,+} \bar{C}_{N,n-1}
:={}&
\sum_{i\ne j}^n (K\ast f)(x_i)\cdot\nabla_{v_i}\bar{C}_{N,n-1}(z_{[n]\setminus\{j\}})
\\
&-\sum_{i\ne j}^n
K(x_i-x_j)\cdot\nabla_{v_i}
\bar{C}_{N,n-1}(z_{[n]\setminus\{j\}}),
\\
&-\sum_{i\ne j}^n
\int_{\mathcal{D}} V_f(z_{\star},z_j)\,
\bar{C}_{N,n-1}(z_{[n]\setminus\{i,j\}},z_{\star})\,
f(z_{\star})\,dz_{\star},
\\
\end{align*}
while
\begin{align*}
\tilde{S}_N^{n,\circ} \bar{C}_{N,n}
:={}&
\frac{n-1}{N-1}
\sum_{i=1}^n (K\ast f)(x_i)\cdot\nabla_{v_i}\bar{C}_{N,n}
-\frac{1}{N-1}
\sum_{i\ne j}^n
K(x_i-x_j)\cdot\nabla_{v_i}\bar{C}_{N,n}
\\
&-\frac{n-1}{N-1}
\sum_{j=1}^n
\int_{\mathcal{D}} V_f(z_{\star},z_j)\,
\bar{C}_{N,n}(z_{[n]\setminus\{j\}},z_{\star})\,
f(z_{\star})\,dz_{\star}
\\
&+\frac{1}{N-1}
\sum_{i\ne j}^n
\int_{\mathcal{D}}
K(x_i-x_{\star})\cdot\nabla_{v_i}
\bar{C}_{N,n}(z_{[n]\setminus\{j\}},z_{\star})\,
f(z_{\star})\,dz_{\star}
\\
&-\frac{1}{N-1}
\sum_{i\ne j}^n
\int_{\mathcal{D}}
V_f(z_{\star},z_j)\,
\bar{C}_{N,n}(z_{[n]\setminus\{i\}},z_{\star})\,
f(z_{\star})\,dz_{\star}
\\
&+\frac{1}{N-1}
\sum_{i\ne j}^n
\int_{\mathcal{D}^2}
V_f(z_{\star},z_{\star}')\,
\bar{C}_{N,n}(z_{[n]\setminus\{i,j\}},z_{\star},z_{\star}')\,
f(z_{\star})f(z_{\star}')\,dz_{\star}dz_{\star}',
\end{align*}
and finally
\begin{align*}
S_N^{n,-} \bar{C}_{N,n+1}
:={}&
\frac{N-n}{N-1}
\sum_{j=1}^n
\int_{\mathcal{D}}
V_f(z_{\star},z_j)\,
\bar{C}_{N,n+1}(z_{[n]},z_{\star})\,
f(z_{\star})\,dz_{\star}
\\
&-\frac{N-n}{N-1}
\sum_{i=1}^n
\int_{\mathcal{D}}
K(x_i-x_{\star})\cdot\nabla_{v_i}
\bar{C}_{N,n+1}(z_{[n]},z_{\star})\,
f(z_{\star})\,dz_{\star}
\\
&-2\frac{N-n}{N-1}
\sum_{i=1}^n
\int_{\mathcal{D}^2}
V_f(z_{\star},z_{\star}')\,
\bar{C}_{N,n+1}(z_{[n]\setminus\{i\}},z_{\star},z_{\star}')\,
f(z_{\star})f(z_{\star}')\,dz_{\star}dz_{\star}'.
\end{align*}

\section{Quantitative Estimates for Dual Cumulants}\label{sec3}
This section is devoted to deriving estimates on the components of the rescaled dual cumulants introduced in the previous section, as stated in the following theorem.

\begin{theorem}\label{thm2}
Let \(k \geq 1\), \( K \in \mathscr{R}_k(\Omega; \mathbb{R}^d)
\cap H^{s}(\Omega; \mathbb{R}^d) \) where \( k - 1 < s \leq k \) and 
\[
\mathscr{R}_k(\Omega; \mathbb{R}^d)
 :=
\begin{cases}
L^{\infty}(\Omega; \mathbb{R}^d) + L^4(\Omega; \mathbb{R}^d) & \text{if } k = 1, \\
W^{k-1,\infty}(\Omega; \mathbb{R}^d) & \text{if } k \geq 2.
\end{cases}
\] 
Assume further that $f \in L^{\infty}(0, T; L^1(\mathcal{D}) \cap L^{\infty}(\mathcal{D}))$, and satisfies
\[
\int_{0}^{T} \left( \int_{\mathcal{D}} \Big( |\nabla_{v} \log f|^{2} + \sum_{j=1}^k \Big| \tfrac{1}{f} \nabla^{j}_{z} \nabla_{v} f \Big|^{2} \Big) f \right)^{\frac{1}{2}} < \infty,
\qquad
K * f \in L^{\infty}\left(0, T; W^{k+1,\infty}(\Omega; \mathbb{R}^d)\right),
\]
along with $\operatorname{ess}\sup_{x_\star} \int |\nabla_{v_\star}\log f(z_\star)|^2 f(z_\star)\,dv_\star < \infty$.
Assume that, at the final time $T$, the rescaled dual cumulants satisfy the following estimates for some $0\leq k_0\leq k$
\begin{equation}
\|\bar C_{N,n}(T)\|_{L^2_f}\leq C^n\,N^{-k_0/2}\quad \mbox{for}\ n\leq k_0,\quad\|\bar C_{N,n}(T)\|_{L^2_f}\leq C^n\,N^{-n/2}\quad \mbox{for}\ n\geq k_0.\label{finaldatadual}
\end{equation}
Then, there exists a decomposition of the dual cumulants \(\bar{C}_{N,n}\) as
\[\bar{C}_{N,n}:=h^{(k)}_{N,n} +\sum_{q=1}^k\sum_{i_1,...,i_q=1}^n div_{z_{i_1}...z_{i_q}} w^{(k),i_1,...,i_q}_{N,n},
\]
and a time $\Delta t_k > 0$, independent of $N$, such that for all \(1 \leq q \leq k-1\), \( k \leq n \leq N \) and for all \(t \in [T-\Delta t_k,T]\), we obtain the bounds
\begin{equation}\label{eq:cumulants_bound0}
 \|w_{N,n}^{(k),i_1,...,i_k}(t) \|_{L^2_f} \lesssim C_k C^{nk} N^{-\frac{k}{2}} 
\end{equation}
\begin{equation}\label{eq:cumulants_bound02}
\|w_{N,n}^{(k),i_1,...,i_q}(t) \|_{L^2_f} \lesssim C_k C^{nk} N^{-\frac{k}{2}} 
\end{equation}
\begin{equation}\label{eq:cumulants_bound03}
\|h_{N,n}^{(k)}(t) \|_{L^2_f} \lesssim C_k C^{nk} N^{-\frac{s}{2}}, 
\end{equation}
for some $C$ independent of $k$ and $N$ and some ${C}_k$ behaving at least exponentially. For $n<k$, we have the simpler bounds
\begin{equation}
  \|w_{N,n}^{(k),i_1,...,i_k}(t) \|_{L^2_f}+  \|w_{N,n}^{(k),i_1,...,i_q}(t) \|_{L^2_f}+\|h_{N,n}^{(k)}(t) \|_{L^2_f}\lesssim  C_k C^{nk}\,N^{-\max(k_0,n)/2}.\label{eq:cumulants_boundk0}
\end{equation}
\end{theorem}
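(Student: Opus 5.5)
We argue by induction on $k$, using energy estimates in the weighted Hilbert spaces $L^2_f(\mathcal D^n)$ for the truncated hierarchy \eqref{eq:hierarchy_equation}, run backward from the final time $T$ on a short interval. The basic structure is a Cauchy--Kovalevskaya-type bound on the generating quantity
\[
E(t)=\sum_{n} \lambda^{n}\,\|\bar C_{N,n}(t)\|_{L^2_f},
\]
with $\lambda$ large but fixed, and with time-dependent weights $\lambda(t)$ that shrink as we move backward from $T$.

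\textbf{Energy identity.} The operator $L_n$ is skew-adjoint in $L^2_f$ because $f$ solves \eqref{eq:vlasov_eq}. Hence $\frac{d}{dt}\|\bar C_{N,n}\|_{L^2_f}$ is controlled only by the right-hand side of \eqref{eq:hierarchy_equation}.

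\textbf{Leading terms.} By the cancellation \eqref{cancellation_prop_V_f} and Cauchy--Schwarz, the $j$-sum term is bounded by $n\,\|V_f\|\,\|\bar C_{N,n}\|$. Here $\|V_f\|$ is controlled by $\|K\|_{L^2+L^\infty}$ together with $\sup_{x_\star}\int|\nabla_v\log f|^2 f\,dv_\star$. The $\bar C_{N,n+2}$ coupling is bounded by $n\,\|\bar C_{N,n+2}\|$. These loss-free couplings with polynomial-in-$n$ growth are exactly what the $\lambda^n$ weighting with shrinking $\lambda(t)$ absorbs, and this produces the existence of $\Delta t_k$.

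\textbf{Remainder: the derivative loss.} The remainder $R_{N,n}$ contains terms such as $\frac1{N-1}K(x_i-x_j)\cdot\nabla_{v_i}\bar C_{N,n}$ and $\frac1{N-1}\sqrt{N/n}\,K\cdot\nabla_{v}\bar C_{N,n-1}$, which lose one derivative. We handle this loss by never differentiating $\bar C$. Instead, we place these terms in divergence form,
\[
K(x_i-x_j)\cdot\nabla_{v_i}g=\mathrm{div}_{v_i}\big(K(x_i-x_j)\,g\big),
\]
and introduce them as new unknowns $w^{i}_{N,n}$ solving the transported equation $\partial_t w-L_n w=(\text{source})$. Commuting $\mathrm{div}_{z_i}$ with $L_n$ produces commutators involving $\nabla(K*f)$; these are bounded since $K*f\in W^{k+1,\infty}$. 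The weight $f$ produces terms of the form $\frac1f\nabla_z\nabla_v f$, which are controlled by the assumed integrability of these quantities.

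\textbf{Step $k=1$.} We set $\bar C_{N,n}=h^{(1)}+\sum_i\mathrm{div}_{z_i}w^{(1),i}$. The $w^{(1),i}$ collect the derivative-losing sources, whose coefficients carry $N^{-1}\cdot N^{1/2}=N^{-1/2}$ from the prefactors in \eqref{remainder}. Their $L^2_f$ norm involves $K(x_i-x_j)\,g$, which we bound using $K\in L^4+L^\infty$ and $f\in L^\infty$; this is where $L^4$ enters, via the term $\|K\,\bar C\|$ and an interpolation. The function $h^{(1)}$ satisfies the hierarchy with non-losing sources. For $s<1$ we use a Littlewood--Paley splitting $K=K_{\le M}+K_{>M}$: the smooth part $K_{\le M}$ is treated by taking the derivative on $K$ itself, at cost $M^{1-s}$, while the rough part $K_{>M}$ goes into $w$ with gain $M^{-s}$. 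Optimizing over $M$ gives $N^{-s/2}$ for $h$.

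\textbf{Inductive step $k-1\to k$.} We substitute the order-$(k-1)$ decomposition into the remainder $R_{N,n}$. The terms already of the form $\mathrm{div}\,w^{(k-1)}$ are fed through one more divergence extraction. Each such extraction gains an extra $N^{-1/2}$ from the remainder prefactor and costs one more derivative on $K$ (hence $K\in W^{k-1,\infty}$) and on $f$ (hence $\nabla^k_z\nabla_v f$). This yields the $q$-fold divergences $w^{(k),i_1\dots i_q}$.

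\textbf{Final data and lower orders.} The hypothesis \eqref{finaldatadual} is used at $t=T$ for the initial (final) values. For $n<k$, the same Grönwall estimates simply propagate $N^{-\max(k_0,n)/2}$, which gives \eqref{eq:cumulants_boundk0}.

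\textbf{Main obstacle.} The hard part is closing the estimates uniformly in $n$ up to $N$. The $\bar C_{N,n-1}$ coupling carries the factor $\sqrt{N/n}$, which must be exactly compensated by the $N^{-1/2}$ gain of one level. The $C^{nk}$ weights in the bounds must be chosen so that this compensation and the combinatorial growth in $n$ across the $k$ iterations still sum within the weighted norm $E(t)$. We would first try to run the bound for the vector $(h,w^{(k),\cdot})$ simultaneously in a single weighted energy $E(t)$, with geometrically decreasing radius $\lambda(t)$.
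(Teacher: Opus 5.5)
Your overall architecture matches the paper's. You split $\bar C_{N,n}=h^{(k)}_{N,n}+\sum_{q}\sum \mathrm{div}_{z_{i_1}\cdots z_{i_q}}w^{(k),i_1,\dots,i_q}_{N,n}$, commute derivatives through the mean-field flow, iterate the $N^{-1/2}$ prefactor of $R_{N,n}$ against the level-$(k-1)$ bounds, mollify at scale $N^{-1/2}$, and close with a shrinking-radius generating function.

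\textbf{The gap is the treatment of the leading couplings.} You bound $\mathcal J[\bar C_{N,n}]$ and $\bar{\mathcal J}[\bar C_{N,n+2}]$ by $n\|\bar C_{N,n}\|_{L^2_f}$ and $n\|\bar C_{N,n+2}\|_{L^2_f}$. But the only $L^2_f$ control on $\bar C$ itself is $O(1)$; because of the derivative loss in $R_{N,n}$, smallness can only be propagated for $h$ and $w$ separately. Feeding $\mathcal J[\bar C]$ into the $h$-equation destroys the smallness of $h$. Feeding $\mathcal J[h]$ instead leaves $\mathcal J[\mathrm{div}\,w]$ and $\bar{\mathcal J}[\mathrm{div}\,w]$ unaccounted for, and these are not loss-free.

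This is exactly what Lemma~\ref{lem:decomposition-h1-w1} and Lemma~\ref{lemma2} handle:
\begin{itemize}
\item A divergence in an external variable $z_{i_1}$, $i_1\neq j$, commutes with the $z_\star$-integral and must stay in the $w$-equation. So the $w$'s solve their own coupled $n\to n+2$ hierarchy, which never involves $h$; this is why they close at $N^{-k/2}$.
\item A divergence in the integrated variable is integrated by parts onto $fV_f$. In $\mathcal J$, the derivative falling on $K(x_\star-x_j)$ is rewritten as $-\nabla_{x_j}$ and pulled outside as an exterior divergence, cf.~\eqref{spreadderivative}. This is why at most $k-1$ derivatives of $K$ are ever needed.
\item In $\bar{\mathcal J}$ both variables are integrated, so that trick is unavailable. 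One splits $V_f=V_f^\delta+W_f^\delta$. $V_f^\delta$ is paired with $h_{N,n+2}$ and, after integration by parts, with the $w_{N,n+2}$, at cost $\|\nabla^k(fV_f^\delta)\|_{L^2_f}\lesssim\delta^{s-k}$. $W_f^\delta$ is paired with the full $\bar C_{N,n+2}=O(1)$ in the $h$-equation, at gain $\delta^{s}$.
\end{itemize}
Your Littlewood--Paley remark has the right balance, but only at $k=1$. Sending the rough part ``into $w$'' cannot work: $K_{>M}$ acting on an interior divergence can neither be integrated by parts (that would put derivatives on $K_{>M}$, which is only in $H^s$) nor be written as an exterior divergence in $z_{[n]}$. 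Your inductive step re-expands only the remainder. The level-$k$ leading terms are never decomposed, yet their top-order part is where $\delta^{s-k}$, and hence $N^{-s/2}$, come from. So \eqref{eq:cumulants_bound03} for $s\in(k-1,k]$ is not actually derived.

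\textbf{A second, easily fixed error is the size of the radius.} The sources in these hierarchies do not decay in $n$. At $k=1$ the remainder contributes $\sim((n+1)/N)^{1/2}$ to every $w_{N,n}$, and at level $k$ the induction feeds in terms of size $C^{n(k-1)}n^{3/2}N^{-k/2}$. A series $\sum_n\lambda^n(\cdot)$ with $\lambda$ large therefore diverges. You need a small radius, $\rho<1$ and $\rho\lesssim C^{-(k-1)}$, shrinking backward in time; this small radius is exactly what produces the $C^{nk}$ growth in the conclusions. A single energy for $(h,w)$ is fine if you weight $h$ by $N^{s/2}$ and $w$ by $N^{k/2}$, so that the $\delta^{s-k}$ coefficient in the $h$-equation is absorbed; this works because the $w$-system does not see $h$.

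\textbf{Finally, the case $n<k$ is not a plain Gr\"onwall propagation.} It relies on the remainder coupling level $n$ to level $n-1$ with an extra $N^{-1/2}$, together with $1+\min\{k-1,\max(k_0,n-1)\}\geq\max(k_0,n)$.
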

In general we expect the rescaled dual cumulant $\bar C_{N,n}$ to behave like $C^n\,N^{-n/2}$ at the final time, or the non-rescaled dual cumulant $C_{N,n}$ to behave like $C^n\,N^{-n}$. This corresponds to~\eqref{finaldatadual} with $k_0=0$. However in some special cases, the first few dual cumulants may be much smaller, which is why we can allow for $k_0>0$.

\subsection{Decomposing the Mean-Field Flow}
The goal is to decompose \(\bar C_{N,n}\) into components carrying different numbers of exterior divergence operators and derive a separate equation for each component.

Throughout this section, we make use of the following decomposition of the interaction operator \(V_f\) previously defined in~\eqref{V_f_def}
\begin{equation}\label{eq:V_f_decomposition}
   V_f = V_f^{\delta} + W_f^{\delta}, 
\end{equation}
with the components given explicitly by
\[
V_{f}^{\delta}(z,z')
= \big( K_{\delta}(x-x') - K \ast f(x) \big)
\cdot \nabla_{v} \log f(z),
\]
and
\[
W_{f}^{\delta}(z,z')
= \big( K(x-x') - K_{\delta}(x-x') \big)
\cdot \nabla_{v} \log f(z).
\]
Here the regularized kernel \(K_{\delta}\) is defined by
\[
K_{\delta} := K \ast \rho_{\delta},
\quad
\rho_{\delta}(x) := \delta^{-d}\,\rho\!\left(\frac{x}{\delta}\right),
\]
for some parameter $\delta >0$ to be properly chosen later on, and for a smooth mollifier \(\rho \in C_c^\infty(\mathbb R^d)\) with compact support.

We also introduce the evolution operator associated with the
\(n\)-particle mean-field transport operator \(L_n\).
For each $n\ge1$, let $G^{(n)}_{t,T}$ denote the operator acting on functions of
$z_{[n]}=(z_1,\dots,z_n)\in\mathcal D^n$, generated by the operator $L_n$ defined in~\eqref{eq:operator_definition}. Namely the solution $F_{n,i}$ to
\[
\partial_t F_{n,i}-L_n\,F_{n,i}=g_{n,i},\quad F_{n,i}(t=T)=\bar F_{n,i}
\]
is given by
\[
F_{n,i}(t,z_{[n]})=G^{(n)}_{t,T}\,\bar F_{n,i}-\int_t^T G^{(n)}_{t,\tau}\,g_{n,i}(\tau)\,d\tau.
\]
Note that $G^{(n)}_{t,T}$ is trivially bounded on every $L^p$ space with an operator norm that is uniform in $t$ and $T$.
\medskip

We now observe that, for each fixed $n$, the truncated hierarchy for $\bar C_{N,n}$ can be viewed as a linear transport equation of the form
\[
\partial_t \bar C_{N,n} - L_n \bar C_{N,n} = g_n,
\]
where the source term $g_n$ collects the coupling terms and the remainder $R_{N,n}$. In particular, $g_n$ admits a decomposition in terms of derivatives of lower- and higher-order cumulants. Because our analysis involves right-hand sides with derivatives, we must also explain how derivatives commute with \(G^{(n)}\).
This is provided by the following lemma.
\begin{lemma} \label{lem:one_particle_k} 
Fix $i\in\{1,\dots,n\}$ and $k\ge 1$.
Let
\[
A_n^{(i)} := v_i\cdot\nabla_{x_i} + (K*f)(x_i)\cdot\nabla_{v_i},
\]
with $K \ast f \in L^{\infty}\left(0,T ;W^{k+1,\infty}(\Omega ; \mathbb{R}^d) \right)$, and denote by $G^{(n,i)}_{t,T}$ the corresponding operator solving the PDE: $( \partial_t+A_n^{(i)} )u=0$.
Then, there exists a family of operators $G^{(n,i)}$ and $\{G^{(n,i),\ell,k}\}_{\ell=0}^k$
bounded over $L^2$ with an operator norm that depends only on $k$ and $T-t$
such that 
\[
G_{t,T}^{(n,i)}\nabla^k_{z_i} g_{n,i}=\sum_{\ell =0}^k\nabla_{z_i}^{\,\ell}
G^{(n,i),\ell,k}_{t,T} g_{n,i}.
\]
\end{lemma}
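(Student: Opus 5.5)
We will prove the lemma by induction on $k$, starting from an explicit commutator identity for the single-particle transport. Write $G:=G^{(n,i)}_{t,T}$. Let $\Phi_{\tau,t}(z_i)=(X,V)$ be the characteristic flow of the field $b(\tau,z_i)=(v_i,(K*f)(\tau,x_i))$. Then $G$ is composition with this flow, i.e.\ $(G g)(z_i)=g(\Phi_{T,t}(z_i))$ (or a time-integrated version for source terms). The flow preserves the Lebesgue measure, because $\operatorname{div}_{z}b=\nabla_v\cdot (K*f)(x)=0$. Hence $G$ is an isometry on every $L^p$, with norm independent of $t,T$.

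The key identity comes from differentiating the transport equation. If $u$ solves $(\partial_t+A)u=0$ with $u(T)=\nabla_{z_i}g$, we compare $u$ with $\nabla_{z_i}(Gg)$. Commuting the gradient with $A$ gives
\[
(\partial_t+A)\nabla_{z_i}w=-(\nabla_{z_i}b)^{\top}\nabla_{z_i}w,
\]
so a first-order derivative produces a zeroth-order linear term with matrix coefficient $M(\tau,z_i)=\nabla_{z_i}b$. This matrix is bounded by $1+\|\nabla_x K*f\|_\infty$. Equivalently, at the level of the flow we use the chain rule: $\nabla_{z}(g\circ\Phi)=(D\Phi)^{\top}(\nabla g)\circ\Phi$. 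Inverting gives
\[
(\nabla g)\circ\Phi=(D\Phi)^{-\top}\nabla(g\circ\Phi)=\nabla\cdot\big((D\Phi)^{-1}(g\circ\Phi)\big)-\big(\nabla\cdot (D\Phi)^{-1}\big)(g\circ\Phi).
\]
Here the divergence acts on the appropriate index. The determinant of $D\Phi$ is $1$, so $(D\Phi)^{-1}$ is the cofactor matrix, and it is $W^{k,\infty}$-bounded when $K*f\in W^{k+1,\infty}$.

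This identity settles the case $k=1$: $G\nabla g=\nabla(G^{1,1}g)+G^{0,1}g$. We set $G^{1,1}g=(D\Phi)^{-1}(g\circ\Phi)$ and $G^{0,1}g=-(\nabla\cdot(D\Phi)^{-1})(g\circ\Phi)$. Both operators are bounded on $L^2$ because multiplication by a bounded matrix composes with a measure-preserving map. The bound uses Gronwall on the variational equation $\partial_\tau D\Phi=\nabla b\,D\Phi$, giving $\|D\Phi\|_{W^{k,\infty}}\le C_k e^{C_k(T-t)}$, so the norms depend only on $k$ and $T-t$.

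For general $k$, we apply the $k=1$ formula to $\nabla^{k-1}g$. This produces $\nabla(\text{multiplier}\cdot(\nabla^{k-1}g)\circ\Phi)$ plus a zeroth-order multiplier term acting on $(\nabla^{k-1}g)\circ\Phi$. Each factor $(\nabla^{k-1}g)\circ\Phi$ is then rewritten by the induction hypothesis as $\sum_{\ell\le k-1}\nabla^\ell(\ldots)$. The remaining step is to move the smooth multipliers outside the derivatives with the Leibniz rule, $a\nabla^\ell u=\sum_{j\le \ell}c_j\nabla^{\ell-j}\big((\nabla^{j}a) u\big)$ (this identity also follows by induction). After regrouping by the number of outer derivatives, we obtain $\sum_{\ell=0}^k\nabla^\ell G^{\ell,k}g$. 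Each $G^{\ell,k}$ is a finite sum of products of derivatives of the cofactor matrix, of order at most $k$, composed with the flow.

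The main obstacle is the regularity bookkeeping. The multipliers need up to $k$ derivatives of $D\Phi^{-1}$, which means $k+1$ derivatives of $b$, and that is exactly the assumption $K*f\in W^{k+1,\infty}$. We must check that the $W^{k,\infty}$ estimates on the flow, obtained by differentiating the variational ODE and applying Gronwall, are uniform with constants depending only on $k$ and $T-t$. The time-integrated source version, needed for Duhamel terms, then follows by integrating the same pointwise identity in $\tau$.
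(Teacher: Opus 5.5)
Your proposal is correct and takes essentially the same route as the paper: represent $G^{(n,i)}_{t,T}$ as composition with the characteristic flow, invert the chain rule through $(D\Phi)^{-1}$ and apply the product rule to define $G^{1,1}$ and $G^{0,1}$, bound the Jacobian via Gr\"onwall, and induct by applying the $k=1$ identity to $\nabla^{k-1}g$. The rest of what you add only makes explicit what the paper leaves implicit in its ``expand and re-index'' step and its appeal to standard Gr\"onwall estimates: the Leibniz identity that moves multipliers outside the derivatives, measure preservation of the flow, and the fact that $\det D\Phi=1$ makes $(D\Phi)^{-1}$ the adjugate (so it is as regular as $D\Phi$, and by the Piola identity $G^{0,1}$ even vanishes).
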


\begin{proof}
Note that all variables $x_j$ and $v_j$ for $j\neq i$ are just parameters so that we will not indicate them in the various formulas. Using the method of characteristics, we define the characteristic curves
\[
\dot{X}_i(t,s,x,v) = V_i(t,s,x,v),
\qquad
X_i(t=s,s,x,v)=x_i,
\]
and
\[
\dot{V}_i(t,s,x,v) = K * f \bigl(X_i(t,s,x,v)\bigr),
\qquad
V_i(t=s,s,x,v) = v_i.
\]
Define
\[
F_{n,i}(t) := f_{n,i}\bigl(t, X_i(t,s,x,v), V_i(t,s,x,v)\bigr).
\]
By the chain rule, we compute
\begin{align*}
\partial_t F_{n,i}
&=
\partial_t f_{n,i}(t,X_i,V_i)
+
\bigl(\dot X_i\cdot\nabla_{x_i}
+
\dot V_i\cdot\nabla_{v_i}\bigr)
f_{n,i}(t,X_i,V_i) \\
&=
\partial_t f_{n,i}(t,X_i,V_i)
+
A_n^{(i)} f_{n,i}(t,X_i,V_i) \\
&=
\nabla_{z_i}^k g_{n,i}(t,X_i,V_i).
\end{align*}
Integrating from $t$ to $T$ yields
\[
F_{n,i}(t)
=
F_{n,i}(T)
-
\int_t^T
\nabla_{z_i}^k g_{n,i}\bigl(\tau, X_i(\tau,s,x,v), V_i(\tau,s,x,v)\bigr)\,d\tau.
\]
Taking $s=t$, we obtain the representation of the operator $G^{(n,i)}_{t,T}$ as
\[
G^{(n,i)}_{t,T} \bar f_{n,i}=\bar f_{n,i}(X_i(T,t,x,v),V_i(T,t,x,v)),
\]
so that $G^{(n,i)}_{t,\tau}$ is naturally bounded on $L^2$.

\noindent
We now need to rewrite $G^{(n,i)}_{t,\tau}\nabla_{z_i}^k$ as a finite sum of
derivatives acting outside the operator, and we proceed by induction on $k$.

\medskip
\noindent\textbf{Step 1: Case $k=1$.}
Assume that $g_{n,i}$ is sufficiently smooth and denote by
\[
Z_i(\tau,t,x,v) := (X_i(\tau,t,x,v), V_i(\tau,t,x,v))
\]
the characteristic flow associated with the one--particle operator $A_n^{(i)}$.
By the definition of the backward flow operator, for any test function $h$ we have
\[
\big(G^{(n,i)}_{t,\tau} h\big)(x,v) = h\big(\tau, Z_i(\tau,t,x,v)\big).
\]
In particular,
\[
G^{(n,i)}_{t,\tau}\nabla_{z_i} g_{n,i}
=
\big(\nabla_{z_i} g_{n,i}\big)\big(\tau,Z_i(\tau,t,x,v)\big).
\]
The key observation is that, since the right--hand side of the equation already
contains one derivative, we do not aim to eliminate derivatives altogether, but
rather to rewrite this expression so that exactly one derivative appears
\emph{outside} an operator acting on $g_{n,i}$. By the chain rule,
\[
\nabla_{z_i}\!\left(g_{n,i}\big(\tau,Z_i\big)\right)
=
\big(\nabla_{z_i} g_{n,i}\big)\big(\tau,Z_i\big)\,\nabla Z_i,
\]
where $\nabla Z_i$ denotes the Jacobian matrix of the flow with respect to $z_i$.
Since the characteristic flow is a diffeomorphism, the matrix $\nabla Z_i$ is
invertible, and we may therefore write
\[
\big(\nabla_{z_i} g_{n,i}\big)\big(\tau,Z_i\big)
=
(\nabla Z_i)^{-1}
\nabla_{z_i}\!\left(g_{n,i}\big(\tau,Z_i\big)\right).
\]
Applying the product rule to the right--hand side yields the decomposition
\[
\big(\nabla_{z_i} g_{n,i}\big)\big(\tau,Z_i\big)
=
\nabla_{z_i}\!\left((\nabla Z_i)^{-1} g_{n,i}\big(\tau,Z_i\big)\right)
-
\nabla_{z_i}(\nabla Z_i)^{-1}\,
g_{n,i}\big(\tau,Z_i\big).
\]
This naturally leads us to define the operators
\[
G^{(n,i),1,1}_{t,\tau} g_{n,i}
:=
(\nabla Z_i)^{-1}\, g_{n,i}\big(\tau,Z_i\big),
\qquad
G^{(n,i),0,1}_{t,\tau} g_{n,i}
:=
-\nabla_{z_i}(\nabla Z_i)^{-1}\, g_{n,i}\big(\tau,Z_i\big),
\]
so that
\begin{equation}\label{eq2}
G^{(n,i)}_{t,\tau}\nabla_{z_i} g_{n,i}
=
\nabla_{z_i}\big(G^{(n,i),1,1}_{t,\tau} g_{n,i}\big)
+
G^{(n,i),0,1}_{t,\tau} g_{n,i}.
\end{equation}
The matrices $(\nabla Z_i)^{-1}$ and $\nabla_{z_i}(\nabla Z_i)^{-1}$ depend only on
the characteristic flow. Under the assumption that $K*f \in L^{\infty}(0,T;W^{2,\infty}(\Omega;\mathbb{R}^d))$, standard Gr\"onwall estimates for the
Jacobian of the flow imply that the coefficients $(\nabla Z_i)^{-1}$ and
$\nabla_{z_i}(\nabla Z_i)^{-1}$ are bounded uniformly in space on any finite time
interval. Consequently, for $\ell=0,1$, all $0\le t\le \tau\le T$, and all $h \in L^2$, the operators
$G^{(n,i),\ell,1}_{t,\tau}$ are bounded on $L^2$, with
\[
\|G^{(n,i),\ell,1}_{t,\tau} h\|_{L^2}
\le C_1\,e^{C_1\,(T-t)}\|h \|_{L^2}.
\]
Inserting identity \eqref{eq2} into the solution representation with $k=1$
yields the desired result.

\medskip
\noindent\textbf{Step 2: General case.}
Let $G^{(n,i)}_{t,\tau}\nabla_{z_i}^{k} g_{n,i}$ for a sufficiently smooth $g_{n,i}$. We prove the corresponding statement at order $k$.
Applying the $k=1$ decomposition of Step~1 to the function
$h:=\nabla_{z_i}^{k-1}g_{n,i}$ gives
\begin{equation}\label{eq:step1_applied_to_h}
G^{(n,i)}_{t,\tau}\nabla_{z_i}h
=
\nabla_{z_i}\big(G^{(n,i),1,1}_{t,\tau}h\big)
+
G^{(n,i),0,1}_{t,\tau}h,
\end{equation}
hence
\[
G^{(n,i)}_{t,\tau}\nabla_{z_i}^{k} g_{n,i}
=
\nabla_{z_i}\Big(G^{(n,i),1,1}_{t,\tau}\nabla_{z_i}^{k-1}g_{n,i}\Big)
+
G^{(n,i),0,1}_{t,\tau}\nabla_{z_i}^{k-1}g_{n,i}.
\]
We now repeatedly apply the first-order decomposition from Step~1 to move
derivatives outside the flow operators. Since each application produces only
bounded coefficient operators coming from the characteristic flow and lowers the
number of derivatives falling on $g_{n,i}$ by one, after finitely many iterations
we may expand and re-index the resulting finite sum to obtain
\[
G^{(n,i)}_{t,\tau}\nabla_{z_i}^{k} g_{n,i}
=
\sum_{\ell=0}^{k}
\nabla_{z_i}^{\,\ell}\big( G^{(n,i),\ell,k}_{t,\tau} g_{n,i}\big),
\]
for a suitable family of bounded operators
$\{G^{(n,i),\ell,k}_{t,\tau}\}_{\ell=0}^k$.

\noindent
Finally, by the same reasoning as in Step 1, for $0 \leq \ell \leq k$, each $G^{(n,i),\ell,k}_{t,\tau}$ is bounded
on $L^2$, and we may write
\[
\|G^{(n,i),\ell,k}_{t,\tau} h\|_{L^2}
\le C_k(T-t)\,\|h\|_{L^2},
\qquad \forall\, h\in L^2.
\]
This completes the induction step.
\end{proof}

We now write the PDE and the solution representation for the fully tensorized $n$--particle flow with derivatives of order exactly $k$ acting across all particle variables. This then yields the corresponding representation with derivatives of order up to $k$.
\begin{lemma}
\label{lem:tensor_all_derivatives}
Let
\[
A_n:=\sum_{i=1}^n A_n^{(i)}, \qquad L_n:=-A_n,
\qquad
z_{[n]}=(z_1,\dots,z_n).
\]
Assume that the source term admits the decomposition
\[
g_n(t,z_{[n]})
=
\sum_{r=0}^k \ \sum_{i_1,\dots,i_r=1}^n
\nabla_{z_{i_1}\cdots z_{i_r}}\,
g_n^{(k),i_1,\dots,i_r}(t,z_{[n]}).
\]
Consider the terminal value problem
\[
\partial_t f_n + A_n f_n = g_n,
\qquad
f_n(T)=\bar f_n.
\]
For $i_{[r]}=(i_1,\dots,i_r)\in\{1,\dots,n\}^r$, define
\[
S(i_1,\dots,i_r)
:=
\Big\{
s\in\mathbb{N}^n :
\operatorname{supp}(s)\subset \{i_1,\dots,i_r\},
\ |s|\le r
\Big\}.
\]
Then there exist operators
\[
G^{(n)}_{t,T}
\quad\text{and}\quad
\big\{ G^{(n),s,i_{[r]}}_{t,\tau} \big\}_{r=|s|,\dots,k},
\]
bounded on $L^2(\mathcal D^n)$, such that the solution admits the representation
\[
f_n(t)
=
G^{(n)}_{t,T}\bar f_n
-
\int_t^T
\sum_{r=0}^k
\sum_{i_1,\dots,i_r=1}^n
\sum_{s\in S(i_1,\dots,i_r)}
\nabla^{\,s}\Big(
G^{(n),s,i_{[r]}}_{t,\tau}\,
g_n^{(k),i_1,\dots,i_r}(\tau)
\Big)\,d\tau.
\]
Moreover, there exists a constant
$C_k$ such that for all $(i_1,\dots,i_r)$, all
$s\in S(i_1,\dots,i_r)$, and all $h\in L^2(\mathcal D^n)$
\begin{equation}\label{Operators_bound_tensorized}
\|G^{(n),s,i_{[r]}}_{t,\tau} h\|_{L^2(\mathcal D^n)}
\le C_k(T-t)\,\|h\|_{L^2(\mathcal D^n)}.
\end{equation}
\end{lemma}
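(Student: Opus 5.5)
The plan is to reduce the tensorized statement to Lemma~\ref{lem:one_particle_k} by exploiting the product structure of the flow. Since $A_n=\sum_{i=1}^n A_n^{(i)}$ and each $A_n^{(i)}$ acts only on $z_i$ (with coefficients $v_i$ and $(K*f)(x_i)$ depending only on $z_i$ and $t$), the operators $A_n^{(i)}$ commute. The $n$-particle characteristic flow is therefore the product $Z(\tau,t,z_{[n]})=(Z_1(\tau,t,z_1),\dots,Z_n(\tau,t,z_n))$ of one-particle flows. Hence
\[
G^{(n)}_{t,\tau}=G^{(n,1)}_{t,\tau}\circ\cdots\circ G^{(n,n)}_{t,\tau},
\]
with the factors commuting. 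Duhamel's formula, as recalled after \eqref{eq:operator_definition}, gives
\[
f_n(t)=G^{(n)}_{t,T}\bar f_n-\int_t^T G^{(n)}_{t,\tau}g_n(\tau)\,d\tau.
\]
By linearity, it then suffices to treat a single term $G^{(n)}_{t,\tau}\nabla_{z_{i_1}\cdots z_{i_r}}g$ with $g=g_n^{(k),i_1,\dots,i_r}$.

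\textbf{Step 1 (grouping derivatives by particle).} For a fixed multi-index $(i_1,\dots,i_r)$, I will group the derivatives by particle. Let $m_j:=\#\{p: i_p=j\}$, so that $\sum_j m_j=r\le k$. Then
\[
\nabla_{z_{i_1}\cdots z_{i_r}}=\prod_{j\in\{i_1,\dots,i_r\}}\nabla^{m_j}_{z_j},
\]
with the tensor indices ordered appropriately. Since derivatives in different variables commute with each other, and $\nabla_{z_j}$ commutes with $G^{(n,i)}$ for $i\neq j$, I can apply the one-particle factors successively. For each $j$ in the support, Lemma~\ref{lem:one_particle_k} (applied with order $m_j\le k$, valid since $K*f\in W^{k+1,\infty}$) gives
\[
G^{(n,j)}_{t,\tau}\nabla^{m_j}_{z_j}h=\sum_{\ell_j=0}^{m_j}\nabla^{\ell_j}_{z_j}G^{(n,j),\ell_j,m_j}_{t,\tau}h.
\]
For $j$ outside the support, $G^{(n,j)}$ acts with no derivatives.

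\textbf{Step 2 (commuting the remaining derivatives out).} The main obstacle is that after processing particle $j$, the derivatives $\nabla^{\ell_j}_{z_j}$ sit outside while the operators $G^{(n,j'),\ell_{j'},m_{j'}}$ for other particles $j'$ must still act. I need these to commute past $\nabla_{z_j}$. This holds because the operators from Lemma~\ref{lem:one_particle_k} have the form $h\mapsto M(z_{j'})\,h(\dots,Z_{j'}(\tau,t,z_{j'}),\dots)$. Here the coefficient matrices $M$ (entries of $(\nabla Z_{j'})^{-1}$ and their derivatives) depend only on $z_{j'}$, and the composition changes only the $j'$-th variable, so they commute with $\nabla_{z_j}$ for $j\ne j'$. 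I would verify explicitly that the inductive construction in Step~2 of that lemma preserves this structure, since it only composes and multiplies by flow-dependent coefficients in $z_{j'}$. Carrying out the factors in sequence then yields
\[
G^{(n)}_{t,\tau}\nabla_{z_{i_1}\cdots z_{i_r}}g=\sum_{(\ell_j)}\nabla^{s}\Big(\prod_j G^{(n,j),\ell_j,m_j}_{t,\tau}\prod_{j\notin\mathrm{supp}}G^{(n,j)}_{t,\tau}\,g\Big),
\]
where $s=(\ell_1,\dots,\ell_n)$ has $\operatorname{supp}(s)\subset\{i_1,\dots,i_r\}$ and $|s|\le r$. This shows $s\in S(i_1,\dots,i_r)$, and the operator in parentheses defines $G^{(n),s,i_{[r]}}_{t,\tau}$, after summing over the at most finitely many $(\ell_j)$ giving the same $s$ (for fixed $i_{[r]}$ each $s$ arises exactly once).

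\textbf{Step 3 (operator bounds).} For the bound \eqref{Operators_bound_tensorized}, each one-particle factor is bounded on $L^2(\mathcal D^n)$ by Fubini. The other variables are parameters, and the one-particle bound from Lemma~\ref{lem:one_particle_k} is uniform in them because $K*f$ does not depend on them. The pure transport factors $G^{(n,j)}$ are isometries on $L^2$, since the flow is volume preserving (the field $(v,K*f(x))$ is divergence free). At most $r\le k$ nontrivial factors appear, so the product norm is bounded by $\prod_j C_{m_j}(T-t)\le C_k(T-t)$. The point that needs care is that this constant is independent of $n$, which holds precisely because only the particles in the support carry non-isometric factors.
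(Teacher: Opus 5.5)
Your proposal is correct and follows the paper's argument: write $G^{(n)}_{t,\tau}$ as the tensor product of the commuting one-particle flows, apply Duhamel's formula, invoke Lemma~\ref{lem:one_particle_k} separately in each variable that carries derivatives, and bound the resulting operator using the at most $r\le k$ non-isometric one-particle factors, while the untouched transport factors preserve the $L^2$ norm. You also justify points the paper only asserts: that the one-particle operators commute with $\nabla_{z_j}$ in the other variables, and that the constant is a product of at most $k$ one-particle constants and hence independent of $n$; this product is slightly more accurate bookkeeping than the paper's choice of $C_k$ as a maximum.
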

\begin{proof}
Each one--particle operator $A_n^{(i)}$ generates a backward flow
$G^{(n,i)}_{t,\tau}$ on the variables $(x_i,v_i)$. Since the full operator
$A_n=\sum_{i=1}^n A_n^{(i)}$ is tensorized, the associated $n$--particle backward
flow is given by
\[
G^{(n)}_{t,\tau}:=\bigotimes_{i=1}^n G^{(n,i)}_{t,\tau}.
\]
A direct computation shows that $G^{(n)}_{t,\tau}$ solves
$\partial_t u + A_n u = 0$ on $\mathcal D^n$, and therefore Duhamel’s formula yields
\[
f_n(t)
=
G^{(n)}_{t,T}\bar f_n
-
\int_t^T G^{(n)}_{t,\tau} g_n(\tau)\,d\tau.
\]
Substituting the decomposition of $g_n$ and using linearity, we obtain
\[
f_n(t)
=
G^{(n)}_{t,T}\bar f_n
-
\int_t^T
\sum_{r=0}^k
\sum_{i_1,\dots,i_r=1}^n
G^{(n)}_{t,\tau}
\big(
\nabla_{z_{i_1}\cdots z_{i_r}}\, g_n^{(k),i_1,\dots,i_r}(\tau)
\big)\,d\tau.
\]
Fix $r$ and a tuple $(i_1,\dots,i_r)$. Since $G^{(n)}_{t,\tau}$ is tensorized, we
may apply Lemma~\ref{lem:one_particle_k} independently in each of the variables
$z_{i_1},\dots,z_{i_r}$, while the remaining factors act only by the (derivative--free)
one--particle flow. Iterating Lemma~\ref{lem:one_particle_k} yields
$L^2$ bounded operators $G^{(n),s,i_{[r]}}_{t,\tau}$ such that
\[
G^{(n)}_{t,\tau}
\big(
\nabla_{z_{i_1}\cdots z_{i_r}}\, h
\big)
=
\sum_{s\in S(i_1,\dots,i_r)}
\nabla^{\,s}\Big(
G^{(n),s,i_{[r]}}_{t,\tau}\, h
\Big).
\]
Applying this identity with
$h=g_n^{(k),i_1,\dots,i_r}(\tau)$ and summing over $r$ and the indices
$(i_1,\dots,i_r)$ yields the claimed representation formula.

\noindent
Finally, by construction each operator $G^{(n),s,i_{[r]}}_{t,\tau}$ acts nontrivially
only on the particle variables indexed by $\{i_1,\dots,i_r\}$, and coincides with
the derivative--free one--particle flow on all remaining variables. Consequently,
$G^{(n),s,i_{[r]}}_{t,\tau}$ is obtained by composing and taking finite linear
combinations of the one--particle operators provided by
Lemma~\ref{lem:one_particle_k} in at most $0 \le r\le k$ variables.

\noindent
Using the $L^2$--boundedness of these one--particle operators and the fact that the  transport flow in the remaining variables preserves the $L^2$ norm, we obtain a
constant $C_k$ such that for all $0\le r\le k$, all tuples
$(i_1,\dots,i_r)$, all $s\in S(i_1,\dots,i_r)$, and all $h\in L^2(\mathcal D^n)$,
\[
\|G^{(n),s,i_{[r]}}_{t,\tau} h\|_{L^2(\mathcal D^n)}
\le C_k\,(T-t)\,\|h\|_{L^2(\mathcal D^n)}.
\]
Here \(C_k\) depends only on \(k\) and on the relevant bounds for
\(K*f\) on \([t,T]\). It may be chosen as the maximum of the constants
\(C_r\) in Lemma \ref{lem:one_particle_k} for \(0\le r\le k\). This concludes the proof.
\end{proof}

\subsection{First--Order Decomposition of the Dual Cumulants}
We now state and prove the following decomposition lemma for the dual cumulants. In this section, we use the notation $\Lambda$ for constants independent of $N,\,n,\,t,\,T$, the value of which may change from line to line.
\begin{lemma}\label{lem:decomposition-h1-w1}
There exists a decomposition of the \(\bar C_{N,n}\) as
\[
\bar{C}_{N,n}
= h^{(1)}_{N,n} + \sum_{i_1=1}^{n} \mathrm{div}_{z_{i_1}} w^{(1),i_1}_{N,n},
\qquad
\bar{C}_{N,n+2}
= h^{(1)}_{N,n+2} + \sum_{i_1=1}^{n+2} \mathrm{div}_{z_{i_1}} w^{(1),i_1}_{N,n+2},
\]
where the component \(h^{(1)}_{N,n}\) satisfies the following hierarchy
\[
\begin{aligned}
h^{(1)}_{N,n}(t)=&G_{t,T}^{(n)}\,\bar C_{N,n}(T)
 -\int_t^T G_{t,\tau}^{(n)} \Bigg(
\Lambda\!\left(\tfrac{n+1}{N}\right)^{\frac12} R^{(1)}_{N,n}
+H_{N,n}^{1,1}+H_{N,n}^{1,2} 
\Bigg)\,d\tau \\
&-\int_t^T \sum_{i_1=1}^n G_{t,\tau}^{(n),0,i_1} \Bigg(\Lambda\! \left( \tfrac{n+1}{N}\right)^{\frac{1}{2}}R_{N,n}^{(1),i_1} +  \sum_{j=1}^n \int \Gamma (z_\star,z_j) w^{(1),i_1}_{N,n} f(z_\star) dz_\star \\
&+ n \int V_f^{\delta} (z_{n+1}, z_{n+2}) w^{(1),i_1}_{N,n+2} f(z_{n+1}) f(z_{n+2}) \Bigg) \, d\tau.
\end{aligned}
\]
Here we denote
\[
\begin{split}
    H^{1,1}_{N,n}=&\sum_{j=1}^n \int V_f \, h^{(1)}_{N,n} \, f(z_{\star})\,dz_{\star} - \sum_{j=1}^n \int \nabla_{z_{\star}}
\Big( K \ast f(x_{\star}) \cdot \nabla_{v_{\star}} \log f(z_{\star})\, f(z_{\star}) \Big)
\, w^{(1),j}_{N,n}\,dz_{\star}\\
&+ \sum_{j=1}^n \int K(x_{\star}-x_j)
\Big(\nabla_{z_{\star}} \nabla_{v_{\star}} \log f(z_{\star})\, f(z_{\star})\Big)
\, w^{(1),j}_{N,n}\,dz_{\star}, 
\end{split}
\]
and 
\[
\begin{split}
    H^{1,2}_{N,n}=& n \!\!\int W_f^{\delta}\, \bar{C}_{N,n+2}\,
f(z_{n+1}) f(z_{n+2})\,dz_{n+1}dz_{n+2}\\
&+ n \!\!\int V_f^{\delta}\, h^{(1)}_{N,n+2}\,
f(z_{n+1}) f(z_{n+2})\,dz_{n+1}dz_{n+2} \\
&+ n \!\!\int \nabla_{z_{n+1}}(V_f^{\delta} f)\,
w_{N,n+2}^{(1),n+1}\, f(z_{n+2})\,dz_{n+1}dz_{n+2}\\
&+ n \!\!\int \nabla_{z_{n+2}}(V_f^{\delta} f)\,
w_{N,n+2}^{(1),n+2}\, f(z_{n+1})\,dz_{n+1}dz_{n+2}.
\end{split}
\]
The \(w^{(1),i_1}_{N,n}\) solve the corresponding hierarchy
\[
\begin{aligned}
\nabla_{z_{i_1}} w^{(1),i_1}_{N,n}(t)
= -\nabla_{z_{i_1}} \int_t^T G^{(n),i_1,i_1}_{t,\tau}\Big(
&\Lambda\!\left(\tfrac{n+1}{N}\right)^{\frac12} R^{(1),i_1}_{N,n}
+ \sum_{j=1}^n \int \Gamma(z_\star,z_j)\, w^{(1),i_1}_{N,n}\, f(z_\star)\,dz_\star \\
&+ n \int V_f^\delta(z_{n+1},z_{n+2})\,
w^{(1),i_1}_{N,n+2}\,
f(z_{n+1}) f(z_{n+2})
\Big)\,d\tau,
\end{aligned}
\]
where we denote
\[
\Gamma(z_\star,z_j)
= (1-\delta_{i_1j})V_f(z_\star,z_j)
- \delta_{i_1j} K(x_\star-x_j)\,
\nabla_{z_\star}\log f(z_\star).
\]
\end{lemma}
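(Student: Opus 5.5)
The decomposition is not a priori but is defined by a splitting of the Duhamel representation. I would write each $\bar C_{N,n}$ as the solution of the transport equation $\partial_t\bar C_{N,n}-L_n\bar C_{N,n}=g_n$, with $g_n$ the right-hand side of the truncated hierarchy \eqref{eq:hierarchy_equation}. Then I would rewrite every term of $g_n$ either as an $L^2_f$-type term without derivatives on the unknowns or as a divergence $\nabla_{z_{i_1}}(\cdot)$ of such a term.

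\textbf{Reorganising the source.}
\begin{itemize}
\item For the remainder, I would use the explicit formulas for $S_N^{n,+}$, $\tilde S_N^{n,\circ}$, $S_N^{n,-}$. Each contribution is either derivative-free or carries a single $\nabla_{v_i}$. The latter can be written as $\mathrm{div}_{z_i}$ of a bounded quantity, because $K(x_i-x_j)\cdot\nabla_{v_i}g=\mathrm{div}_{v_i}(K(x_i-x_j)g)$. This defines $R^{(1)}_{N,n}$ and $R^{(1),i_1}_{N,n}$, each carrying the prefactor $((n+1)/N)^{1/2}$ up to a constant $\Lambda$.
\item For the one-particle coupling $\sum_j\int V_f(z_\star,z_j)\bar C_{N,n}(z_{[n]\setminus\{j\}},z_\star)f(z_\star)dz_\star$, I would insert the decomposition $\bar C_{N,n}=h^{(1)}_{N,n}+\sum_{i_1}\mathrm{div}_{z_{i_1}}w^{(1),i_1}_{N,n}$ in the variable $z_\star$. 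When the divergence acts on $z_\star$ (the slot $i_1=j$), I would integrate by parts in $z_\star$, putting the derivative on $V_f(z_\star,z_j)f(z_\star)=(K(x_\star-x_j)-K*f(x_\star))\cdot\nabla_{v_\star}f(z_\star)$. This produces the $w^{(1),j}$ terms of $H^{1,1}_{N,n}$ and, from $\nabla_{z_\star}K(x_\star-x_j)=-\nabla_{x_j}K(x_\star-x_j)$, a divergence in $z_j$, which is the $\delta_{i_1j}$ part of $\Gamma$. When the divergence acts on another variable $z_{i_1}$ with $i_1\neq j$, it commutes with the integral and stays an exterior divergence, giving the $(1-\delta_{i_1j})V_f$ part of $\Gamma$.
\item For the two-particle coupling, I would split $V_f=V_f^\delta+W_f^\delta$ as in \eqref{eq:V_f_decomposition}. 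The $W_f^\delta$ part is kept on the whole $\bar C_{N,n+2}$. In the $V_f^\delta$ part I insert the decomposition of $\bar C_{N,n+2}$. Divergences in $z_{n+1}$ or $z_{n+2}$ are integrated by parts onto $V_f^\delta f$, which is legitimate since $K_\delta$ is smooth; divergences in $z_{i_1}$ with $i_1\le n$ remain exterior. This yields $H^{1,2}_{N,n}$ and the $w^{(1),i_1}_{N,n+2}$ term.
\end{itemize}

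\textbf{Defining the pieces through Duhamel.} With $g_n=g_n^{0}+\sum_{i_1}\mathrm{div}_{z_{i_1}}g_n^{i_1}$, I would apply Lemma~\ref{lem:tensor_all_derivatives} with $k=1$. This gives $G^{(n)}_{t,\tau}\nabla_{z_{i_1}}g=\nabla_{z_{i_1}}G^{(n),i_1,i_1}_{t,\tau}g+G^{(n),0,i_1}_{t,\tau}g$. I then \emph{define}
\begin{itemize}
\item $\nabla_{z_{i_1}}w^{(1),i_1}_{N,n}$ as minus the time integral of the exterior-derivative part, with the final datum $w^{(1),i_1}_{N,n}(T)=0$;
\item $h^{(1)}_{N,n}$ as $G^{(n)}_{t,T}\bar C_{N,n}(T)$ minus the Duhamel integrals of $g^0_n$ and of the $G^{(n),0,i_1}$ terms.
\end{itemize}
Summing the two gives back the Duhamel formula for $\bar C_{N,n}$, so the decomposition holds.

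\textbf{Main obstacle.} Since $h^{(1)}$ and $w^{(1)}$ appear on both sides, these are implicit, coupled, infinite (in $n$) linear systems, and existence must be justified. I would obtain the family $\{h^{(1)}_{N,n},w^{(1),i_1}_{N,n}\}_n$ as a fixed point in a weighted space $\sup_{t\in[T-\Delta t,T]}\sum_n \Lambda^{-n}\|\cdot\|_{L^2_f}$. On a short interval the Duhamel integrals are contractive, using the bounds \eqref{Operators_bound_tensorized}, the weighted integrability of $\nabla_v\log f$ and $\nabla^2_{zv}f/f$, and the $\operatorname{ess}\sup_{x_\star}$ Fisher-type bound to control the $z_\star$-integrals. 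The delicate point is the two-particle term, which gains no smallness factor and loses a factor $n$. I expect to handle it with the Cauchy–Kovalevski-type weighting $\Lambda^n$ together with the shortness of $\Delta t$, possibly restricting first to the truncated system $n\le N$, which is finite, so that solvability of the linear system follows directly.
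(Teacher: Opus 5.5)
Your plan is correct and is essentially the paper's proof. It uses the same reorganisation of $g_n$: the $\nabla_{v_i}$ terms of $R_{N,n}$ in divergence form, integration by parts in $z_\star$ with $\nabla_{x_\star}K(x_\star-x_j)=-\nabla_{x_j}K(x_\star-x_j)$ producing $H^{1,1}_{N,n}$ and the $\delta_{i_1j}$ part of $\Gamma$, and the $V_f^\delta+W_f^\delta$ split in the two-particle term. It then applies Lemma~\ref{lem:tensor_all_derivatives} with $k=1$ and matches derivative-free against exterior-divergence terms, exactly as the paper does.

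Your existence paragraph goes beyond the paper, which never addresses the implicitness, and it can be simplified. The $w^{(1),i_1}_{N,n}$-equations do not involve $h^{(1)}$, so for fixed $N$ they form a finite linear Volterra system solvable by Picard iteration, without short time or $\Lambda^{n}$ weights. After that, $h^{(1)}_{N,n}:=\bar C_{N,n}-\sum_{i_1}\mathrm{div}_{z_{i_1}}w^{(1),i_1}_{N,n}$ satisfies its equation directly by Duhamel. This avoids the separate uniqueness argument your joint fixed point would otherwise need to identify $h^{(1)}_{N,n}+\sum_{i_1}\mathrm{div}_{z_{i_1}}w^{(1),i_1}_{N,n}$ with $\bar C_{N,n}$.
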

\begin{proof}
Denote
\[
g_n := R_{N,n} + \mathcal J[\bar C_{N,n}] + \bar{\mathcal J}[\bar C_{N,n+2}],
\]
where the interaction operators are given by
\[
\mathcal J[\bar C_{N,n}]
=
\sum_{j=1}^{n}
\int_{\mathcal D}
V_f(z_{\star}, z_j)\,
\bar C_{N,n}(z_{[n]\setminus\{j\}}, z_\star)\,
f(z_{\star})\,dz_{\star},
\]
and
\[
\begin{aligned}
\bar{\mathcal J}[\bar C_{N,n+2}]
&=\sqrt{(n+1)(n+2)}
\int_{\mathcal D^2}
V_f(z_{n+1},z_{n+2}) \\
&\qquad 
\bar C_{N,n+2}(z_{[n+2]})\,
f(z_{n+1})f(z_{n+2})\,
dz_{n+1}dz_{n+2}.
\end{aligned}
\]
Assume that the remainder can be decomposed as 
\[
R_{N,n}
= R_{N,n}^{(1)} + \sum_{i_1=1}^n \mathrm{div}_{z_{i_1}} R_{N,n}^{(1),i_1}
,\]
together with the corresponding decomposition for $\mathcal J[\bar C_{N,n}]$ and $\bar{\mathcal J}[\bar C_{N,n+2}]$,
\[\mathcal J[\bar C_{N,n}] = \mathcal J^{(1)}[\bar C_{N,n}] + \sum_{i_1=1}^n \nabla_{z_{i_1}} \mathcal J^{(1),i_1}[\bar C_{N,n}],\]
and 
\[
\bar{\mathcal J}[\bar C_{N,n+2}] = \bar{\mathcal J}^{(1)}[\bar C_{N,n+2}] + \sum_{i_1=1}^n \nabla_{z_{i_1}} \bar{\mathcal J}^{(1),i_1}[\bar C_{N,n+2}].
\]
The 3 decompositions above immediately yield the following decomposition for $g_n$
\[g_n = g_n^{(1)} + \sum_{i_1=1}^n \nabla_{z_{i_1}} g_n^{(1),i_1}, 
\]
where 
\[
g_n^{(1)} = R_{N,n}^{(1)} + \mathcal J^{(1)}[\bar C_{N,n}] + \bar{\mathcal J}^{(1)}[\bar C_{N,n+2}],
\]
and 
\[
g_n^{(1),i_1} = R_{N,n}^{(1),i_1} + \mathcal J^{(1),i_1}[\bar C_{N,n}] + \bar{\mathcal J}^{(1),i_1}[\bar C_{N,n+2}].
\]
Then, by Lemma \ref{lem:tensor_all_derivatives} for the case $k=1$, the solution to the truncated hierarchy \eqref{eq:hierarchy_equation} has the following form 
\begin{equation}\label{eq:Duhamel_rep}
\begin{aligned}
\bar{C}_{N,n}(t)
&= G_{t,T}^{(n)}\bar{C}_{N,n}(T)
- \int_t^T \Bigg(
G_{t,\tau}^{(n)}g_n^{(1)}(\tau)
+ \sum_{i_1=1}^n G_{t,\tau}^{(n),0,i_1}g_n^{(1),i_1}(\tau) \\
&\qquad\qquad
+ \sum_{i_1=1}^n
\nabla_{z_{i_1}}G_{t,\tau}^{(n),i_1,i_1}
g_n^{(1),i_1}(\tau)
\Bigg)\,d\tau ,
\end{aligned}
\end{equation}
for some $L^2$ bounded operators $ G_{t,\tau}^{(n)} \ , \ G_{t,\tau}^{(n),0,i_1} \ , \ G_{t,\tau}^{(n),i_1,i_1}$. \\

\noindent
We then match the various terms in~\eqref{eq:Duhamel_rep} with the decompositions of \(V_f\) in~\eqref{eq:V_f_decomposition} and of $\bar C_{N,n}$: we collect all terms without derivatives
in the variables \(z_{i_1}\) into the equation for \(h^{(1)}_{N,n}\), and all terms
containing exactly one divergence \(\mathrm{div}_{z_{i_1}}\) into the equation
for \(w^{(1),i_1}_{N,n}\). We need to have special care for the term
\[
\sum_{j=1}^n \int \nabla_{z_\star}(V_f f)(z_\star,z_j)\,
w^{(1),j}_{N,n}(z_{[n]\setminus\{j\}}, z_\star)\,dz_\star,
\]
which can be rewritten by using the explicit formula for \(V_f\) and using the identity
\[
\nabla_{x_\star}K(x_\star-x_j)=-\nabla_{x_j}K(x_\star-x_j).
\]
This leads to the decomposition
\[
\int \nabla_{z_\star}(V_f f)(z_\star,z_j)\,
w^{(1),j}_{N,n}\,dz_\star
= I_1 - I_2,
\]
where
\[
I_2
=\int \nabla_{z_\star}
\!\left(K*f(x_\star)\,\nabla_{v_\star}\log f(z_\star)\,f(z_\star)\right)
w^{(1),j}_{N,n}(z_{[n]\setminus\{j\}}, z_\star)\,dz_\star,
\]
is controlled by the regularity of \(f\), and is therefore
assigned to the equation for \(h^{(1)}_{N,n}\).
On the other hand, we have
\[
I_1
=-\nabla_{z_j}
\int K(x_\star-x_j)\,
\nabla_{v_\star}\log f(z_\star)\,
f(z_\star)\,
w^{(1),j}_{N,n}(z_{[n]\setminus\{j\}}, z_\star)\,dz_\star,
\]
which exhibits a full derivative and contributes to the equation for \(w^{(1),i_1}_{N,n}\). This yields as claimed the following equation on $w_{N,n}^{(1),i_1}$
\[
\begin{aligned}
&w^{(1),i_1}_{N,n}(t)
= -\int_t^T G_{t,\tau}^{(n),i_1,i_1}\Big(
\Lambda\!\left(\tfrac{n+1}{N}\right)^{1/2}
R^{(1),i_1}_{N,n}
+ \sum_{j=1}^n \int \Gamma(z_\star,z_j)\,
w^{(1),i_1}_{N,n}(z_{[n]\setminus\{j\}}, z_\star)\,
f(z_\star)\,dz_\star \\
&\quad+ \sqrt{(n+1)(n+2)}\,
\int V_f^\delta(z_{n+1},z_{n+2})\,
w^{(1),i_1}_{N,n+2}(z_{[n+2]})
f(z_{n+1}) f(z_{n+2})\,dz_{n+1}dz_{n+2}\Big)\,d\tau.
\end{aligned}
\]
On the other hand $h^{(1)}_{N,n}$ solves
\[
\begin{aligned}
h^{(1)}_{N,n}(t)=&G_{t,T}^{(n)}\,\bar C_{N,n}(T)
 -\int_t^T G_{t,\tau}^{(n)} \Bigg(
\Lambda\!\left(\tfrac{n+1}{N}\right)^{\frac12} R^{(1)}_{N,n}
+H_{N,n}^{1,1}+H_{N,n}^{1,2} 
\Bigg)\,d\tau \\
&-\int_t^T \sum_{i_1=1}^n G_{t,\tau}^{(n),0,i_1} \Bigg(\Lambda\! \left( \tfrac{n+1}{N}\right)^{\frac{1}{2}}R_{N,n}^{(1),i_1} +  \sum_{j=1}^n \int \Gamma (z_\star,z_j) w^{(1),i_1}_{N,n} f(z_\star) dz_\star \\
&+ \sqrt{(n+1)(n+2)} \int V_f^{\delta} (z_{n+1}, z_{n+2}) w^{(1),i_1}_{N,n+2} f(z_{n+1}) f(z_{n+2})\,dz_{n+1}dz_{n+2} \Bigg) \, d\tau,
\end{aligned}
\]
where we denote
\[
\begin{split}
    H^{1,1}_{N,n}=&\sum_{j=1}^n \int V_f \, h^{(1)}_{N,n} \, f(z_{\star})\,dz_{\star} - \sum_{j=1}^n \int \nabla_{z_{\star}}
\Big( K \ast f(x_{\star}) \cdot \nabla_{v_{\star}} \log f(z_{\star})\, f(z_{\star}) \Big)
\, w^{(1),j}_{N,n}\,dz_{\star}\\
&+ \sum_{j=1}^n \int K(x_{\star}-x_j)
\Big(\nabla_{z_{\star}} \nabla_{v_{\star}} \log f(z_{\star})\, f(z_{\star})\Big)
\, w^{(1),j}_{N,n}\,dz_{\star}, 
\end{split}
\]
and 
\[
\begin{split}
    H^{1,2}_{N,n}=& \sqrt{(n+1)(n+2)} \Bigg( \!\!\int W_f^{\delta}\, \bar{C}_{N,n+2}\,
f(z_{n+1}) f(z_{n+2})\,dz_{n+1}dz_{n+2}\\
&+  \!\!\int V_f^{\delta}\, h^{(1)}_{N,n+2}\,
f(z_{n+1}) f(z_{n+2})\,dz_{n+1}dz_{n+2} \\
&+  \!\!\int \nabla_{z_{n+1}}(V_f^{\delta} f)\,
w_{N,n+2}^{(1),n+1}\, f(z_{n+2})\,dz_{n+1}dz_{n+2}\\
&+  \!\!\int \nabla_{z_{n+2}}(V_f^{\delta} f)\,
w_{N,n+2}^{(1),n+2}\, f(z_{n+1})\,dz_{n+1}dz_{n+2}\Bigg),
\end{split}
\]
which concludes the proof.
\end{proof}
We can now take advantage of the explicit equations for $h^{(1)}_{N,n}$ and $w_{N,n}^{(1),i_1}$ to derive explicit bounds on these quantities. This is established in the following proposition, which treats the case \(k=1\) of Theorem~\ref{thm2} and provides the base case for the induction argument.
\begin{proposition} \label{component_bounds}
Let $\bar C_{N,n}$ be decomposed according to Lemma~\ref{lem:decomposition-h1-w1}, and let \(K \in \left( L^{4}(\Omega;\mathbb{R}^d)+ L^{\infty}(\Omega;\mathbb{R}^d) \right)\cap H^{s}(\Omega;\mathbb{R}^d)\) for \(0 < s \leq 1\). Assume that $K * f \in L^{\infty}\left(0, T; W^{2,\infty}(\Omega; \mathbb{R}^d)\right)$ and
\[\int_{0}^{T} \left( \int_{\mathcal{D}} ( \left|\nabla_{v} \log f \right|^{2} + \left| \frac{1}{f} \nabla^{2}_{zv} f\right|^{2} ) f \right)^{\frac{1}{2}} < \infty,
\qquad
\sup_{x_\star}\int \bigl|\nabla_{v_\star}\log f(z_\star)\bigr|^2 f(z_\star)\,dv_\star
< \infty.
\]
In addition, assume that there exists $0<\rho_0<1$ such that
\[
\sum_{n=1}^\infty \rho_0^n\,\|\bar C_{N,n}(T)\|_{L^2_f}\leq N^{-1/2}.
\]
Then there exists $C>0$ and $\Delta t_1 >0$ depending on {$\rho_0$}, and various other constants but not $N$ such that the components $h^{(1)}_{N,n}$ and $w^{(1),i_1}_{N,n}$ satisfy the following bounds for all $t\in [T-\Delta t_1,\ T]$
\begin{equation}\label{bound:a_Nn}
\int_t^T\| h^{(1)}_{N,n}(\tau)\|_{L^2_f} d\tau \lesssim C^{n}\, N^{-\frac{s}{2}},
\end{equation}
and
\begin{equation}\label{bound:b_Nn}
\int_t^T\| w^{(1),i_1}_{N,n}(\tau)\|_{L^2_f} d\tau \lesssim C^n\, N^{-\frac{1}{2}}.
\end{equation}
\end{proposition}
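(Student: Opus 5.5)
We would prove the bounds through a weighted, Cauchy--Kovalevskaya-type energy functional summed over the whole hierarchy. Introduce, for a parameter $\rho(t)$ decreasing linearly in $T-t$ from $\rho_0$ (say $\rho(t)=\rho_0-\lambda(T-t)$, with $\lambda$ chosen later),
\[
\mathcal E(t):=\sum_{n\ge1}\rho(t)^n\Big(\|h^{(1)}_{N,n}(t)\|_{L^2_f}+N^{\frac{s-1}{2}}\sum_{i_1=1}^n\|w^{(1),i_1}_{N,n}(t)\|_{L^2_f}\Big).
\]
The aim is to show $\int_t^T\mathcal E\lesssim N^{-s/2}$ for $T-t\le\Delta t_1$. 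Extracting the coefficient of $\rho^n$ with $\rho\ge\rho_0/2$ then gives \eqref{bound:a_Nn} and \eqref{bound:b_Nn} with $C=2/\rho_0$. The starting point is the explicit Duhamel representations of Lemma~\ref{lem:decomposition-h1-w1} for $h^{(1)}_{N,n}$ and $w^{(1),i_1}_{N,n}$. We bound every operator $G^{(n)}_{t,\tau}$, $G^{(n),0,i_1}_{t,\tau}$, $G^{(n),i_1,i_1}_{t,\tau}$ on $L^2_f$ using Lemma~\ref{lem:tensor_all_derivatives}. The weight $f^{\otimes n}$ is transported exactly by the mean-field flow, so these operators act on $L^2_f$ with norm $C_1(T-t)$, uniformly in $n$ up to a factor $C^n$ absorbed in $\rho$. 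The final data are handled directly by the hypothesis $\sum\rho_0^n\|\bar C_{N,n}(T)\|\le N^{-1/2}$.

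We then estimate each source term in $L^2_f$, term by term.

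\emph{(a) Same-level terms.} These are $\sum_j\int V_f\,h\,f$, $\sum_j\int\Gamma\,w\,f$, and the $I_2$-type term and the $\nabla_{z_\star}\nabla_{v_\star}\log f$ term in $H^{1,1}$. By Cauchy--Schwarz in $z_\star$, the bound $\sup_{x_\star}\int|\nabla_v\log f|^2f\,dv_\star<\infty$, and $K\in L^4+L^\infty$, each is bounded by $n\,a(\tau)\|\cdot\|_{L^2_f}$ with $a\in L^1(0,T)$. The integrability of $a$ comes from the assumed $\int_0^T(\int(|\nabla_v\log f|^2+|f^{-1}\nabla^2_{zv}f|^2)f)^{1/2}$. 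The $L^4$ part of $K$ is needed for the $\delta_{i_1 j}$ term of $\Gamma$, where $K(x_\star-x_j)$ must be squared against $f(z_j)f(z_\star)$ with only $f\in L^\infty$. The factor $n$ is absorbed because $\sum n\rho^n X_n\le -\frac{1}{\lambda}\frac{d}{dt}$-type gains arise from $\rho'(t)=\lambda$; concretely, $\int_t^T\sum_n n\rho(\tau)^n a(\tau)X_n(\tau)\,d\tau$ is controlled by the loss in $\rho$.

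\emph{(b) The $n+2$ coupling terms.} These are the $V_f^\delta$ terms acting on $h^{(1)}_{N,n+2}$ and $w^{(1)}_{N,n+2}$ and on $\nabla(V_f^\delta f)w$. They carry the factor $\sqrt{(n+1)(n+2)}\lesssim n$ and lose either nothing or one derivative of $K_\delta$. The derivative loss costs $\|\nabla K_\delta\|_{L^2}\lesssim\delta^{s-1}\|K\|_{H^s}$, which explains the weight $N^{(s-1)/2}$ on $w$ once we choose $\delta=N^{-1/2}$.

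\emph{(c) The $W_f^\delta\bar C_{N,n+2}$ term.} It is bounded by $\|K-K_\delta\|_{L^2}\lesssim\delta^s$, giving $N^{-s/2}$. It is paid for using $\|\bar C_{N,n+2}\|\le\|h\|+\sum\|{\rm div}\,w\|$, where the divergence is moved onto $W_f^\delta f$ after integration by parts.

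\emph{(d) Remainder terms.} $R^{(1)}_{N,n}$ and $R^{(1),i_1}_{N,n}$ are obtained by splitting $R_{N,n}$ from \eqref{remainder}. The $K\cdot\nabla_v$ terms become divergences, placed in $R^{(1),i_1}$, plus $\mathrm{div}\,K$-free pieces. Each carries at least $(n/N)^{1/2}$ or $n/N$, and the $n-1\to n$ coupling carries $\frac{1}{N}\sqrt{N/n}\,n^2$. All of these are absorbed in the same weighted sums.

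Summing with weights gives a closed inequality. Using $\rho'=\lambda$ and the standard Cauchy--Kovalevskaya trick (polynomial factors $n^p\rho^n\le C_p(\rho'/\rho)^{-p}$ type losses absorbed by shrinking $\rho$), we obtain
\[
\int_t^T\mathcal E\le N^{-s/2}+\Big(\tfrac{\Lambda}{\lambda}+\Lambda\!\int_t^T\!a\Big)\int_t^T\mathcal E ,
\]
which closes for $\lambda$ large and $\Delta t_1$ small, with $\rho(T-\Delta t_1)\ge\rho_0/2$.

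The main obstacle is step (b)/(c): controlling the upward coupling $n\mapsto n+2$ with its $n$ factor. This coupling is what forces the analytic-type weights and a short time, and it requires the mollification scale $\delta=N^{-1/2}$ to balance the loss $\delta^{s-1}$ on the $w$-part against the gain $\delta^s$ on $W_f^\delta$. The difficulty lies in checking that the $N^{(s-1)/2}$ weight on $w$ is consistent with the $w$ equation, in particular with the $\Lambda(n/N)^{1/2}R^{(1),i_1}$ source. This should give $w=O(N^{-1/2})$ rather than only $O(N^{-s/2})$, which is precisely the claimed \eqref{bound:b_Nn}.
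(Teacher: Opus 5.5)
\textbf{Two steps fail as written.} Your overall architecture (a Cauchy--Kovalevskaya-type weighted sum with shrinking radius and $\delta=N^{-1/2}$) is close in spirit to the paper's.

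First, the weight on $w$ is inverted. With $\delta=N^{-1/2}$ the derivative loss is $\delta^{s-1}=N^{(1-s)/2}$, not $N^{(s-1)/2}$. The upward coupling in the $h$-equation is $n\,\delta^{s-1}\int_t^T\|w^{(1),j}_{N,n+2}\|_{L^2_f}$. Expressed through your weighted quantity $N^{(s-1)/2}\|w\|$, it carries a factor $n\,N^{1-s}$. So for $s<1$ your closed inequality for $\int_t^T\mathcal E$ cannot hold with $N$-independent constants. Even if it did, extracting coefficients would only give $\int_t^T\|w^{(1),i_1}_{N,n}\|_{L^2_f}\lesssim C^nN^{1/2-s}$, not \eqref{bound:b_Nn}. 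The weight must be $N^{(1-s)/2}$. With that weight the $w$-source $\Lambda((n+1)/N)^{1/2}$ becomes $(n+1)^{1/2}N^{-s/2}$, and extraction yields exactly $N^{-1/2}$.

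Second, your functional controls only $h^{(1)}$ and $w^{(1)}$. But the term $\int W_f^\delta\,\bar C_{N,n+2}\,ff$ and the whole remainder $R_{N,n}$ involve the undecomposed cumulants $\bar C_{N,n-1},\dots,\bar C_{N,n+2}$.
\begin{itemize}
\item In (c), you write $\bar C_{N,n+2}=h+\sum\mathrm{div}\,w$ and integrate the divergence onto $W_f^\delta f$. This puts an $x$-derivative on $K-K_\delta$, which is not in $L^2$ for $s<1$ and is not $O(\delta^s)$ for $s=1$.
\item In (d), decomposing $\bar C$ inside $K(x_i-x_j)\cdot\nabla_{v_i}\bar C$ creates terms $\nabla_{v_i}\cdot(K\,\mathrm{div}_{z_{i_1}}w)$. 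These cannot be put in first-order form with $K\in L^4+L^\infty$, so ``absorbed in the same weighted sums'' is not available.
\end{itemize}
The missing ingredient is the a priori bound
\[
\|\bar C_{N,n}(t)\|_{L^2_f}^2\le\sum_{m=0}^N\|\bar C_{N,m}(t)\|_{L^2_f}^2=\|h_N(t)\|_{L^2(f^{\otimes N})}^2\le\|h_N(T)\|_{L^\infty}^2 .
\]
It follows from the $L^2(f^{\otimes N})$-orthogonality of the dual cumulant expansion, which is forced by the cancellation condition \eqref{eq:dual_cumulant_cancellation}, together with the maximum principle for the backward transport. This bound is what turns these contributions into the pure sources $\Lambda((n+1)/N)^{1/2}$ and $n\delta^s(T-t)$ in \eqref{ineq:w_Nn}--\eqref{ineq:h_Nn}. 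For the $W_f^\delta$ term one uses $\|\int W_f^\delta\bar C_{N,n+2}ff\|_{L^2_f}\le\|W_f^\delta\|_{L^2_f}\|\bar C_{N,n+2}\|_{L^2_f}$.

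With both repairs your single time-integrated functional does close, and it becomes a repackaging of the paper's argument. The paper is more economical. It notes that the $w$-hierarchy does not involve $h$ at all and solves it first, using a generating function transported along $\bar\rho^2=\rho^2-4C\Lambda(T-t)$. Only then does it feed $b_{N,n}\lesssim C^nN^{-1/2}$ into the $h$-hierarchy, so no $N$-dependent weights are needed.
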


\begin{proof}
From the assumptions on $K$ and $f$, we derive the following estimates on the weighted derivatives involving $f$ and the mollified kernels
\begin{align*}
    \|\nabla (f\, V_f^\delta)\|_{L^2_f} &\lesssim \delta^{\,s - 1},\\[4pt]
    \|W_f^\delta\|_{L^2_f} &\lesssim \delta^{\,s}.
\end{align*}
By Lemma \ref{lem:tensor_all_derivatives}, the semigroups $G^{(n)}$ , $G^{(n),0,i_1}$ and $G^{(n),i_1,i_1}$ are bounded on $L^2$. So applying the $L^2_f$ norm on the equations for $w_{N,n}^{(1),i_1}$ and $h_{N,n}^{(1)}$ from Lemma \ref{lem:decomposition-h1-w1} yields the following inequalities 
\begin{equation}\label{ineq:w_Nn}
\begin{aligned}
\|w^{(1),i_1}_{N,n}(t)\|_{L^2_f}
\;\lesssim\;&
C_1 \Bigg( \Lambda\!\left(\frac{n+1}{N}\right)^{1/2} \\
\; &+n\!\int_t^T \|w^{(1),i_1}_{N,n}(\tau)\|_{L^2_f}\, d\tau
\;+\;
n\!\int_t^T \|w^{(1),i_1}_{N,n+2}(\tau)\|_{L^2_f}\, d\tau \Bigg),
\end{aligned}
\end{equation}
since the $w_{N,n}$ all vanish at time $T$. Correspondingly, we have for the $h_{N,n}^{(1)}$
\begin{equation}\label{ineq:h_Nn}
\begin{aligned}
&\|h^{(1)}_{N,n}(t)\|_{L^2_f}
\;\lesssim\; \|\bar C_{N,n}(T)\|_{L^2_f}+C_1 \Bigg(
n\,\Lambda\!\left(\frac{n+1}{N}\right)^{1/2}
\\ & +\;
n\!\int_t^T \|h^{(1)}_{N,n}(\tau)\|_{L^2_f}\, d\tau +n\delta^s(T-t)
\;+\;
n\!\int_t^T \|w^{(1),j}_{N,n}(\tau)\|_{L^2_f}\, d\tau \\
&+\;
n\!\int_t^T \|h^{(1)}_{N,n+2}(\tau)\|_{L^2_f}\, d\tau
\;+\;
n\,\delta^{\,s-1}\!\int_t^T \|w^{(1),j}_{N,n+2}(\tau)\|_{L^2_f}\, d\tau \Bigg).
\end{aligned}
\end{equation}
We also note that the term $\Lambda \!\left(\frac{n+1}{N}\right)^{1/2}$ in the inequalities above arises from bounding each of the remainder terms on pages 12-13, using the assumptions on $K$ and $f$.
The rest of the proof is devoted to solving the two hierarchies of estimates above to derive the desired bounds.\\

\noindent
We start with hierarchy~\eqref{ineq:w_Nn}, and introduce the following time-integrated quantities
\[
b_{N,n}(t)
:= \sup_{i_1} \int_{t}^{T} \|w^{(1),i_1}_{N,n}(\tau)\|_{L^2_f}\,d\tau.
\]
Inequality~\eqref{ineq:w_Nn} then becomes
\begin{equation}\label{ineq:b_Nn}
\partial_t b_{N,n}(t)
\;\gtrsim\;
- C \Lambda (n+1)\left( b_{N,n}(t)+ b_{N,n+2}(t) + N^{-\frac{1}{2}} \right) .
\end{equation}
We can solve hierarchy~\eqref{ineq:b_Nn} using the generating function 
\begin{equation}\label{def:G}
Z_N(t,\rho)
:= \sum_{n=1}^{\infty} \rho^n\left(\, b_{N,n}(t)+ C(n+1) N^{-\frac{1}{2}}\int_0^t \Lambda\right),
\end{equation}
which we only consider for $0<\rho<1$ so that $\sum_n \rho^n\,n$ is finite. Using the bound~\eqref{ineq:b_Nn}, we obtain
\[
\begin{aligned}
\partial_t Z_N(t, \rho)
&\geq- C \Lambda\, \sum_{n=1}^{\infty} \rho^n(n+1) \left( b_{N,n}(t) + b_{N,n+2}(t) \right). \\
\end{aligned}
\] 
Now we rewrite the second sum in the following way
\begin{align*}
\partial_t Z_N(t,\rho)
&\geq -C \Lambda\, \rho 
\sum_{n=1}^{\infty} (n+1) \rho^{n-1}
 b_{N,n}  \\
&\quad - C \Lambda\, \frac{1}{\rho} 
\sum_{n=1}^{\infty} (n+1) \rho^{n+1}\,b_{N,n+2} \\
& \quad \geq -C \Lambda\,\bigg( ( \rho + \frac{1}{\rho} ) \partial_{\rho} Z_N -  ( 1 - \frac{1}{\rho^2} ) Z_N \bigg),
\end{align*} 
which yields
\[
\partial_t Z_N(t, \rho)
\geq -2C\, \frac{\Lambda}{\rho}\, \partial_{\rho} Z_N.
\]
We can solve the differential inequality above using the method of characteristics, define
\[
\frac{d}{dt}\bar\rho(t,s,\rho)=-2C\, \frac{\Lambda}{\bar\rho},\quad\bar\rho(s,s,\rho)=\rho.
\]
This is easy to solve and gives
\[
\bar\rho^{2}=\rho^2-4C\,(t-s)\, \Lambda.
\]
This implies that for $t_1\geq t_2$
\[
Z_{N}(s, \bar \rho(t_1,s,\rho)) \leq Z_{N}(t, \bar\rho(t_2,s,\rho)),
\] 
and consequently
\[
Z_{N}(t, \sqrt{\rho^2 - 4C\,(T-t)\, \Lambda}) \leq Z_{N}(T, \rho).
\]
Choose \( \Delta t_1^{(1)} < \frac{\rho^2}{8\,C\,\Lambda} \) and assume \( T - t \leq \Delta t_1^{(1)} \) so that, 
\[
 \sqrt{\rho^2 - 4C\,(T-t)\, \Lambda} \geq \sqrt{\rho^{2} - 4C \Lambda\Delta t_1^{(1)}}\geq \frac{\rho}{\sqrt{2}}.
 \]
We also have the following bound at $t=T$
\begin{align*}
Z_{N}(T, \rho) 
&\lesssim \sum_{n=1}^\infty \rho^n \left(  b_{N,n}(T) 
+ C(n+1) N^{-\frac{1}{2}}  \right) \\
&\lesssim N^{-\frac{1}{2}}\sum_{n=1}^\infty \rho^n C(n+1)  \\
&\lesssim  N^{-\frac{1}{2}} (1 - \rho)^{-2}\lesssim N^{-\frac{1}{2}},
\end{align*}
for a fixed choice of $\rho$.
\noindent
Putting everything together, we get for all \(t \in [T-\Delta t_1^{(1)},T]\):
\[
\sum_{n=1}^\infty \left( \frac{\rho}{\sqrt{2}} \right)^n\,b_{N,n}(t)  
\leq Z_{N}(t,\sqrt{\rho^2 - 4C\,(T-t)\, \Lambda} ) 
\leq Z_{N}(T, \rho) 
\lesssim N^{-\frac{1}{2}},
\]
which directly yields, by taking any value for $\rho<1$, that
\begin{equation}\label{bound:b_Nn_pointwise}
 b_{N,n}(t) \lesssim \left(\frac{\sqrt{2}}{\rho}\right)^n N^{-\frac{1}{2}}\lesssim C^n\,N^{-1/2}.
\end{equation} 
Moving on to hierarchy~\eqref{ineq:h_Nn}, we first use the bound~\eqref{bound:b_Nn_pointwise} and pick \(\delta := N^{-\frac{1}{2}}\). Following similar steps, we obtain the following differential hierarchy
\begin{equation}\label{ineq2:h_Nn}
\begin{aligned}
&\partial_t a_{N,n}(t)
\;\gtrsim\;
-\|\bar{C}_{N,n}(T)\|_{L^2_f}-n\Lambda\!\left(\frac{n+1}{N}\right)^{\!1/2}-n\delta^s(T-t)-n\delta^{s-1}\,C^n\, N^{-\frac{1}{2}}\\
&\qquad\qquad- C(n+1)\left( a_{N,n}(t) +\,a_{N,n+2}(t)\right)\\
&\quad\;\gtrsim - \| \bar{C}_{N,n}(T)\|_{L^2_f} -C(n+1) \left( a_{N,n}(t) + a_{N,n+2}(t) + N^{-\frac{s}{2}}\, C^n\right),
\end{aligned}
\end{equation}
where we denote
\[
a_{N,n}(t)
:= \int_{t}^{T} \|h_{N,n}^{(1)}(\tau)\|_{L^2_f}\,d \tau.
\]
We solve hierarchy~\eqref{ineq2:h_Nn} using again the corresponding generating function
\[
\widetilde{Z}_N(t,\rho)
:= \sum_{n=1}^{\infty} \rho^n
\Big(
a_{N,n}(t)
+ (n+1)\,C^n\,
N^{-\frac{s}{2}}\!\int_0^t \Lambda
+ \|\bar{C}_{N,n}(T)\|_{L^2_f}
\Big).
\]
Similarly by applying the bound~\eqref{ineq:h_Nn} and differentiating w.r.t \(t\) we obtain the following inequality 
\[
\partial_t \widetilde{Z}_N(t,\rho) \geq-2C \Lambda(t) \frac{1}{\rho} \partial_{\rho} \widetilde{Z}_N.
\]
We can solve this differential inequality again through characteristics
\[
\widetilde{Z}_N(t, \sqrt{\rho^2 - 4C\,(T-t)\, \Lambda}) \leq \widetilde{Z}_N(T, \rho).
\]
We naturally choose $\rho<1/C$ and $\rho<\rho_0$ such that
\[
\sum_{n=1}^\infty \rho^n\,\|\bar C_{N,n}(T)\|_{L^2_f}\leq N^{-1/2},
\]
which implies that 
\begin{align*}
\widetilde{Z}_{N}(T, \rho) 
&\lesssim \sum_{n=1}^\infty \rho^n \left(  a_{N,n}(T) 
+ C(n+1)\,C^n\, N^{-\frac{s}{2}} +\|\bar C_{N,n}(T)\|_{L^2_f}  \right) \\
&\lesssim N^{-1/2} +  N^{-\frac{s}{2}}\sum_{n=1}^\infty \rho^n\, C(n+1)\,C^n \lesssim N^{-\frac{s}{2}}.
\end{align*}
Let \( \Delta t_1 =\min\left(\Delta t_1^{(1)},\;\frac{\rho^2}{8\,C\,\Lambda}\right)\). We then obtain the following bound for $n \geq 1$, and for all \(t \in [T-\Delta t_1,T]\)
\[
a_{N,n}(t)\lesssim C^n N^{-s/2}.
\]
This concludes the proof.
\end{proof}

\subsection{Higher Order Decomposition of the Leading Terms and the Remainder}
To prove Theorem~\ref{thm2}, we first establish the following lemmas, which yield an explicit decomposition of both the leading--order and remainder terms in the hierarchy \eqref{eq:hierarchy_equation}, according to the regularity level of the kernel.
\begin{lemma}\label{eq:lemma1}
Let \( k \geq 1 \), and suppose \( k - 1 < s \leq k \). Let \( G_K \in W^{k-1,\infty}(\Omega ; \mathbb{R}^d) \cap H^{s}(\Omega; \mathbb{R}^d) \). 
Assume that the dual cumulants are decomposed up to their \(k-1\) derivatives, as follows
\[
\bar{C}_{N,n} := h^{(k-1)}_{N,n}+\sum_{q=1}^{k-1} \sum_{i_1,\ldots,i_q=1}^n div_{z_{i_1}\ldots z_{i_q}} w^{(k-1),i_1,\ldots,i_q}_{N,n}.
\]
Then one can further derive for any particle index $i\neq 2$
\begin{align*}
& G_K(x_i,x_2)\,\bar{C}_{N,n}=G_K(x_i,x_2)\,h_{N,n}^{(k-1)}\\
&\quad +
\sum_{q=1}^{k-1} \sum_{m=0}^{q}\sum_{\ell=0}^{q-m} \alpha_{m,\ell,q} \mathbb{I}_{\substack{i_{m+1},\ldots, i_{m+\ell}=i \\ i_{m+\ell+1},\ldots,i_q=2}}
\sum_{i_1,\ldots,i_{m}=1}^{n} \nabla_{z_{i_1}\dots z_{i_{m}}}
\Big[
(\nabla^{q-m} G_K)\,
w_{N,n}^{(k-1),i_1,\ldots,i_{q}}
\Big] ,
\end{align*}
where $\alpha_{q,\ell,m}$ is given by
\[
\alpha_{m,\ell,q} := (-1)^{q-m} \binom{q}{m}\,\binom{q-m}{\ell}.
\]
We also define here for $m=0$, $ \sum_{i_1,\ldots,i_{0}=1}^{n} 
\nabla_{z_{i_1}\dots z_{i_{0}}}:=1$. 
 \end{lemma}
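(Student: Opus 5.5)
The plan is a direct computation: multiply the assumed decomposition by $G_K(x_i,x_2)$ and commute the multiplication past each divergence by repeated Leibniz rules. The term $G_K(x_i,x_2)\,h^{(k-1)}_{N,n}$ needs no treatment. For a fixed $q\in\{1,\dots,k-1\}$ and a fixed tuple $(i_1,\dots,i_q)$, I would expand $G_K\,\mathrm{div}_{z_{i_1}\dots z_{i_q}} w$ (with $w=w^{(k-1),i_1,\dots,i_q}_{N,n}$). The key observation is that $G_K(x_i,x_2)$ depends only on $z_i$ and $z_2$. A divergence in $z_{i_p}$ with $i_p\notin\{i,2\}$ therefore commutes with multiplication by $G_K$. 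A divergence with $i_p\in\{i,2\}$ produces a commutator $-(\partial_{z_{i_p}}G_K)\,\cdot$, through the identity
\[
G_K\,\partial_{z_{i_p}}u=\partial_{z_{i_p}}(G_K u)-(\partial_{z_{i_p}}G_K)\,u .
\]

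Iterating this $q$ times, or equivalently using the general Leibniz formula for the product of the operator $\partial_{z_{i_1}}\cdots\partial_{z_{i_q}}$ with a multiplier, I would get
\[
G_K\,\partial_{i_1}\cdots\partial_{i_q}w=\sum_{A\subset[q]}(-1)^{q-|A|}\,\partial_{A}\big[(\partial_{[q]\setminus A}G_K)\,w\big],
\]
where $\partial_{[q]\setminus A}G_K$ vanishes unless every index in $[q]\setminus A$ equals $i$ or $2$. This formula can be checked by induction on $q$, peeling off one derivative at a time. The next step is to sum over the tuples $(i_1,\dots,i_q)\in[n]^q$. The tuples $w^{(k-1),i_1,\dots,i_q}$ enter symmetrically, since the decomposition may be symmetrized in the derivative indices. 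So I would group subsets $A$ by size $m$, which gives the factor $\binom{q}{m}$, and relabel so that the outer derivatives carry indices $i_1,\dots,i_m$, summed freely over $[n]$. The remaining $q-m$ indices are forced to lie in $\{i,2\}$. I would split them as $\ell$ indices equal to $i$ and $q-m-\ell$ equal to $2$, which gives the factor $\binom{q-m}{\ell}$ and the indicator $\mathbb I$. The resulting derivative of $G_K$ has total order $q-m$, written $\nabla^{q-m}G_K$ with the understanding that $\ell$ derivatives fall in $x_i$ and the rest in $x_2$. Only $x$-derivatives survive, because $G_K$ is independent of velocities. Together with the sign this yields exactly $\alpha_{m,\ell,q}=(-1)^{q-m}\binom{q}{m}\binom{q-m}{\ell}$, and the convention for $m=0$ handles the case with no outer derivative.

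Regularity only needs to justify the manipulations. Since $q\le k-1$, at most $k-1$ derivatives fall on $G_K\in W^{k-1,\infty}$, so every product $(\nabla^{q-m}G_K)w$ is a bounded multiple of an $L^2_f$ function, and the identity holds in the distributional sense. I would prove it first for smooth $w$ and pass to the limit by density. The $H^s$ assumption is not needed for the identity itself; it is used later for the estimates.

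I expect the main obstacle to be the combinatorial bookkeeping rather than any analysis. Relabeling the sum over $(i_1,\dots,i_q)$ into an ordered form, with free outer indices first and then the $\ell$ indices equal to $i$ followed by the ones equal to $2$, relies on the symmetry of $w^{(k-1),\cdot}$ in its index tuple. Without that symmetry one cannot collapse the choices into binomial coefficients. I would state this symmetry explicitly, or symmetrize the decomposition beforehand, and then verify the coefficient count on small cases, for instance $q=1$ and $q=2$, to make sure there is no overcounting.
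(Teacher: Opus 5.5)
Your proposal is correct and follows the same route as the paper. Both arguments apply the product rule repeatedly, then count the $\binom{q}{m}$ placements of the outer derivatives and the $\binom{q-m}{\ell}$ ways of assigning the inner ones to $x_i$ or $x_2$, relying on the symmetry of $w^{(k-1),i_1,\ldots,i_q}_{N,n}$ in its indices. Your explicit subset form of the Leibniz rule, together with your remark that the decomposition can be symmetrized because mixed derivatives commute, makes precise what the paper's proof invokes only as holding ``up to permutations of the indices.''
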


\begin{proof}
The proof is a straightforward repeated application of the product rule.
More precisely, fix a term with \(q\) derivatives in the decomposition of \(\bar C_{N,n}\), where \(1\le q\le k-1\). Each term on the right--hand side of the desired equation corresponds to the case where exactly \(0\le m\le q\) derivatives act outside the brackets, while the remaining \(q-m\) derivatives fall on \(G_K\). The identity naturally holds up to permutations of the indices of
\(w_{N,n}^{(k-1),i_1,\cdots,i_q}\), so we can freely assume by symmetry that the first \(m\) indices correspond to the derivatives outside the brackets. Therefore, for \(0 \leq m \leq q\), the terms with \(m\) derivatives \(z_{i_{1}}\cdots z_{i_{m}}\) outside will be of the form
\[
\nabla_{z_{i_1}\dots z_{i_{m}}}\bigl(\nabla_{x_{i_{m+1}}\cdots x_{i_q}}G_K\bigr)
w_{N,n}^{(k-1),i_1,\dots,i_q},
\]
where the interior derivatives \((i_{m+1},\ldots,i_q)\) are confined to either \(i\) or \(2\). Therefore we can choose \(\ell\) times \(i\) where $0 \leq \ell \leq q-m$. This gives us $\binom{q-m}{\ell}$ options for $i$'s, then each option fixes the 2's. Thanks to symmetry, we may again assume that \(i_{m+1},\ldots,i_{m+\ell}=i\), while \(i_{m+\ell+1},\ldots,i_q=2\).

\noindent
The coefficients \(\alpha_{m,\ell,q}\) account for the multiplicity of identical terms generated by repeated applications of the product rule and our use of symmetry. To calculate the number of terms with \(m\) derivatives outside, we first count how many possible placements for the free indices \(i_j\)'s we have. The answer is \({q \choose m}\). For each such placement, we then count the number of ways to choose which \(\ell\) of the remaining \(q-m\) indices are equal to \(i\). There are \({q-m \choose \ell}\) such terms, which yields the formula for \(\alpha_{m,\ell,q}\).
\end{proof}
We note that the statement of the lemma can be simplified by introducing the notation $w^{(k-1),i_1,\ldots,i_0}_{N,n}=h_{N,n}^{(k-1)}$ when $q=0$. In that case one may just write     
\begin{align*}
& G_K(x_i,x_2)\,\bar{C}_{N,n}\\
&\quad =
\sum_{q=0}^{k-1} \sum_{m=0}^{q}\sum_{\ell=0}^{q-m} \alpha_{m,\ell,q} \mathbb{I}_{\substack{i_{m+1},\ldots, i_{m+\ell}=i \\ i_{m+\ell+1},\ldots,i_q=2}}
\sum_{i_1,\ldots,i_{m}=1}^{n} \nabla_{z_{i_1}\dots z_{i_{m}}}
\Big[
(\nabla^{q-m} G_K)\,
w_{N,n}^{(k-1),i_1,\ldots,i_{q}}
\Big].
\end{align*}
For convenience in the proof of the next lemma, we interchange the roles of the indices $q$ and $m$. Reindexing the summation yields the equivalent representation
\begin{equation}\label{eq:G_k_on_cumulants}
\begin{aligned}
G_K(x_i,x_2)\,\bar{C}_{N,n}
&=
\sum_{m=0}^{k-1}\sum_{q=0}^{m}\sum_{\ell=0}^{m-q}
\alpha_{q,\ell,m}\,
\mathbb{I}_{\substack{
i_{q+1},\ldots,i_{q+\ell}=i \\
i_{q+\ell+1},\ldots,i_m=2
}} \\
&\qquad 
\sum_{i_1,\ldots,i_q=1}^{n}
\nabla_{z_{i_1}\dots z_{i_q}}
\Big[
(\nabla^{m-q}G_K)\,
w_{N,n}^{(k-1),i_1,\ldots,i_m}
\Big].
\end{aligned}
\end{equation}
We make use of this notation below as it simplifies the formulas.\\ 

We can now establish iterative equations on the remainder \(R_{N,n}\) terms using Lemma \ref{eq:lemma1}. The core strategy is to control the components of
\(R_{N,n}\) by expressing them in terms of the previous dual components \(\{w_{N,n}^{(k-1),i_1,...,i_{m-1}}\}_{m=1}^k\); here we denote \(w_{N,n}^{(k-1),i_0}:= h_{N,n}^{(k-1)}\). This iterative approach allows us to leverage existing 
bounds that were obtained in a previous iteration.
\begin{lemma}\label{lemma3}
Under the assumptions of Theorem~\ref{thm2}, there exists a decomposition of \(R_{N,n}\)
\begin{equation}\label{eq:Remainder_decomposition}
R_{N,n}
= R_{N,n}^{(k)}
+ \sum_{q=1}^k \;\sum_{i_1,\dots,i_q=1}^n
\mathrm{div}_{z_{i_1}}\cdots \mathrm{div}_{z_{i_q}}
\, R_{N,n}^{(k),i_1,\dots,i_q},
\end{equation}
where up to permutation of the dual indices,
the components \(R_{N,n}^{(k)}\) and \(R_{N,n}^{(k),i_1,\dots,i_q}\) satisfy the following
representations for some families of coefficients
$A_{\alpha,m,l}^{(q,1)}$, $A_{\alpha, m,l}^{(q,2)}$ with $\alpha=-1,\,0,\,+1$. For $q=0$
\[
R_{N,n}^{(k)}
= \Lambda\!\left(\tfrac{1}{nN}\right)^{\frac12}
\sum_{m=0}^{k-1} \;\sum_{\ell=0}^{m}\sum_{i\neq j} \mathbb{I}_{\substack{i_{1},\ldots, i_{\ell}=i \\ i_{\ell+1},\ldots,i_m=j}}\sum_{\alpha=-1,\,0,\,1}
A_{\alpha,m,\ell}^{(0,1)}\,
w_{N,n+\alpha}^{(k-1),i_1,\dots,i_{m}},
\]
and for every \(1\le q\le k\),
\[
\begin{aligned}
R_{N,n}^{(k),i_1,\dots,i_q}
= \Lambda\,\left(\tfrac{1}{nN}\right)^{\frac12}\,\Bigg(
&\sum_{m=q}^{k-1} \sum_{\ell=0}^{m-q}\sum_{i\neq j} \mathbb{I}_{\substack{i_{q+1},\ldots, i_{q+\ell}=i \\ i_{q+\ell+1},\ldots,i_m=j}}\,\sum_{\alpha=-1,\,0,\,1} 
A_{\alpha,m,\ell}^{(q,1)}\,
w_{N,n+\alpha}^{(k-1),i_1,\dots,i_{m}}\\
&+\sum_{m=q-1}^{k-1} \sum_{\ell=0}^{m-q+1}\sum_{i\neq j}
\mathbb{I}_{\substack{
i_{q},\ldots, i_{q+\ell-1}=i \\
i_{q+\ell},\ldots,i_m=j
}}
\sum_{\alpha=-1,\,0,\,1}
A_{\alpha,m,\ell}^{(q,2)}\,
w_{N,n+\alpha}^{(k-1),i_1,\dots,i_m}
\Bigg),
\end{aligned}
\]
where the first term on the right-hand side is defined to be \(0\) for \(q=k\).
\end{lemma}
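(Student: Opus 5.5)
The plan is to go through the remainder $R_{N,n}$ in \eqref{remainder} term by term, substitute for each dual cumulant the decomposition at level $k-1$, and then use Lemma~\ref{eq:lemma1} (in the re-indexed form \eqref{eq:G_k_on_cumulants}) to move derivatives outside.

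\textbf{Step 1: identify the kernel-type factors.} Every term of $S_N^{n,+}\bar C_{N,n-1}$, $\tilde S_N^{n,\circ}\bar C_{N,n}$ and $S_N^{n,-}\bar C_{N,n+1}$ has the form $G_K(x_i,x_j)\,\nabla_{v_i}$ or $\int G_K(x_i,x_\star)\,\cdot\,f(z_\star)\,dz_\star$ applied to a cumulant. Here $G_K$ is one of $K(x_i-x_j)$, $(K*f)(x_i)$, or $V_f$ and its $f$-weighted variants. All of these lie in $W^{k-1,\infty}\cap H^s$ under the assumptions of Theorem~\ref{thm2}, since $K\in\mathscr R_k\cap H^s$, $K*f\in W^{k+1,\infty}$, and the weighted bounds on $\nabla_z^j\nabla_v f/f$ hold.

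The explicit velocity gradients $\nabla_{v_i}$ are rewritten as $\mathrm{div}_{z_i}$ acting on $G_K\,\bar C$. The correction term is $(\mathrm{div}_{v_i}K(x_i-x_j))\bar C=0$, or the analogous harmless term for $K*f$. Each such term therefore already carries one exterior divergence.

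The prefactors are collected into $\Lambda (nN)^{-1/2}$:
\begin{itemize}
\item $\frac1{N-1}\big(\frac{N-n+1}{n}\big)^{1/2}\sim (nN)^{-1/2}$ for the $+$ part;
\item $\frac{1}{N-1}\cdot n$ with the sum over $i\neq j$ absorbed into the $\sum_{i\neq j}$ of the statement for the $\circ$ part;
\item $\big(\frac{n+1}{N-n}\big)^{1/2}$ for the $-$ part.
\end{itemize}
The last line of \eqref{remainder} carries the factor $\sqrt{(n+1)(n+2)}\big(\sqrt{(N-n)(N-n-1)}/(N-1)-1\big)=O(n^2/N)$ and is treated like the $\circ$ terms. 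In every case the index shift is $\alpha\in\{-1,0,+1\}$ (the $n+2$ term reduces to this after integrating out variables, or is absorbed the same way). The $n$-dependent constants go into the $A$ coefficients.

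\textbf{Step 2: apply Lemma~\ref{eq:lemma1}.} Substitute $\bar C_{N,n+\alpha}=\sum_{m}\sum \mathrm{div}\,w^{(k-1),i_1..i_m}_{N,n+\alpha}$ and apply \eqref{eq:G_k_on_cumulants}, with the pair $(i,2)$ relabelled as $(i,j)$ by symmetry. Each term with $m$ derivatives on $w$ then splits into two kinds of pieces:
\begin{itemize}
\item $q$ exterior derivatives $\nabla_{z_{i_1}\cdots z_{i_q}}$;
\item $m-q$ derivatives on $G_K$, distributed as $\ell$ on $x_i$ and $m-q-\ell$ on $x_j$.
\end{itemize}
This reproduces the first sums in the statement, with $A^{(q,1)}_{\alpha,m,\ell}$ absorbing $\alpha_{q,\ell,m}$ times the prefactors and $\nabla^{m-q}G_K\in L^\infty$ (or $L^2$ for the top order), since $m-q\le k-1$.

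For the terms that originally carried a $\nabla_{v_i}$, that extra exterior divergence is added. This gives the second family, with indices shifted: $m$ running from $q-1$ and indicator sets starting at $i_q$, which matches the $A^{(q,2)}$ sum. In particular it produces $q=k$, where only the second family survives, consistent with the convention that the first term vanishes for $q=k$.

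For the integral terms in $z_\star$, derivatives in $z_\star$ are integrated by parts onto $f(z_\star)G_K$ when $z_\star$ is not an exterior variable. The resulting weights $\nabla^j_z(f\nabla_v\log f)/f$ are controlled by the $\sum_j|\nabla^j_z\nabla_v f/f|^2$ assumption. For $V_f$ derivatives in $x_j$, I use $\nabla_{x_\star}K(x_\star-x_j)=-\nabla_{x_j}K$ to transfer them to exterior variables, exactly as in the treatment of $I_1$ in Lemma~\ref{lem:decomposition-h1-w1}.

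\textbf{Main obstacle.} The delicate point is the combinatorial bookkeeping: ensuring that every derivative created either lands on $G_K$ with total order at most $k-1$, or becomes an exterior divergence in a particle variable of the remaining $n$ variables. A derivative must never fall on $z_\star$ after integration unless it can be absorbed by $f$.

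I would handle this by proving the identity for a single generic term $\int G_K(x_i,x_\star)\nabla_{v_i}\bar C(\dots,z_\star)f\,dz_\star$ by induction on $m$, then summing. The $q=0$ formula has no exterior derivative and $m\le k-1$, which is exactly the regularity $W^{k-1,\infty}$. The top case $q=k$ only arises through the extra $\nabla_{v_i}$, which explains why $K$ needs only $k-1$ bounded derivatives.
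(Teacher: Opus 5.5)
Your route is the paper's. You insert the level-$(k-1)$ decompositions into each term of $R_{N,n}$ and apply \eqref{eq:G_k_on_cumulants} with $G_K\in\{V_f,K,K\ast f\}$. The explicit $\nabla_{v_i}$, correctly rewritten as $\mathrm{div}_{v_i}(G_K\,\bar C)$ since $G_K$ does not depend on $v_i$, generates the shifted $A^{(q,2)}$ family and the only $q=k$ terms. You then group by the number of exterior divergences. You are in fact more explicit than the paper about integrating by parts in the integrated variables and about the prefactors.

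The step that fails is your handling of the last line of \eqref{remainder}. It involves $\bar C_{N,n+2}$. After inserting its decomposition and integrating by parts in $z_{n+1},z_{n+2}$, you get integrals of $w^{(k-1),i_1,\dots,i_m}_{N,n+2}$. These do not ``reduce'' to $w^{(k-1)}_{N,n+\alpha}$ with $\alpha\in\{-1,0,1\}$ by integrating out variables. There are two ways to repair this. You can list the contribution with $\alpha=2$: its prefactor $O(n^2/N)$ fits the $\Lambda(nN)^{-1/2}\sum_{i\neq j}$ normalization, and $W^{(k-1),m}_{N,n+2}$ obeys the same inductive bound, so Step~1 of the proof of Theorem~\ref{thm2} is unaffected. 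More cleanly, you can put it back into the leading coupling. The two together equal $\sqrt{(n+1)(n+2)}\,\frac{\sqrt{(N-n)(N-n-1)}}{N-1}\int V_f\,\bar C_{N,n+2}\,f(z_{n+1})f(z_{n+2})\,dz_{n+1}dz_{n+2}$. This is a bounded multiple of $\bar{\mathcal J}[\bar C_{N,n+2}]$, which Lemma~\ref{lemma2} already handles, and the statement then holds as written for the reduced remainder. The paper's proof has the same blind spot: it says $R_{N,n}$ depends only on $\bar C_{N,n-1},\bar C_{N,n},\bar C_{N,n+1}$.

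A smaller point concerns your justification that the coefficients are controlled. Integrating by parts in an integrated variable gives weights of the form $\nabla_z^j(f\nabla_v\log f)/f$ only when that variable carries the factor $\nabla_v f$. Two kinds of terms do not:
\begin{itemize}
\item the integral terms with $G_K=K$, e.g.\ $\int K(x_i-x_\star)\cdot\nabla_{v_i}\bar C_{N,n+1}(z_{[n]},z_\star)f(z_\star)\,dz_\star$ in $S_N^{n,-}$ and its analogue in $\tilde S_N^{n,\circ}$;
\item the second variable $z_\star'$ in the double integrals against $V_f(z_\star,z_\star')$.
\end{itemize}
In both cases the derivatives land on a bare $f$ and produce weights $\nabla_z^j f/f$, including $x$-derivatives of $\log f$. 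The hypothesis on $\frac{1}{f}\nabla_z^j\nabla_v f$ does not cover these. The antisymmetry trick used for $I_1$ only moves derivatives off $K$, so it does not help here. It is also unnecessary in the remainder, where at most $k-1$ derivatives ever reach $K$. The paper simply asserts these coefficients are $L^2_f$-bounded, so the imprecision is shared. You should state the needed weighted bound on $\nabla_z^j f/f$ as an explicit assumption rather than claim it follows from the stated hypotheses.
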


\begin{proof}
The proof is obtained by applying equation \eqref{eq:G_k_on_cumulants} from Lemma~\ref{eq:lemma1} to each contribution in the
definition of \(R_{N,n}\), choosing \(G_K\in\{V_f,K,K\ast f\}\) depending on the term and recalling that $R_{N,n}$ depends on $\bar C_{N,n-1},\;\bar C_{N,n}$ and $\bar C_{N,n+1}$. This yields a representation of the form
\[
\begin{aligned}
&R_{N,n}
= \Lambda\,\left(\tfrac{1}{nN}\right)^{\frac12}\,\Bigg(
\sum_{m=0}^{k-1} \;\sum_{\ell=0}^{m} \alpha_{0,\ell,m}
\sum_{i\neq j}
\mathbb{I}_{\substack{i_{1},\ldots, i_{\ell}=i \\ i_{\ell+1},\ldots,i_m=j}}
\sum_{\alpha=-1,\,0,\,1}
A_{\alpha,m,\ell}^{(0,1)}\,
w_{N,n+\alpha}^{(k-1),i_1,\dots,i_{m}}
\\
& 
+\sum_{q=1}^k\sum_{i_1,\dots,i_q=1}^n
\mathrm{div}_{z_{i_1}}\cdots \mathrm{div}_{z_{i_q}}
\Bigg[
\sum_{m=q}^{k-1} \sum_{\ell=0}^{m-q}\alpha_{q,\ell,m}
\sum_{i\neq j}
\mathbb{I}_{\substack{i_{q+1},\ldots, i_{q+\ell}=i \\ i_{q+\ell+1},\ldots,i_m=j}}
\\
&\qquad
\sum_{\alpha=-1,\,0,\,1} 
A_{\alpha,m,\ell}^{(q,1)}\,
w_{N,n+\alpha}^{(k-1),i_1,\dots,i_{m}} +\sum_{m=q-1}^{k-1} \sum_{\ell=0}^{m-q+1}\alpha_{q,\ell,m}
\sum_{i\neq j}
\mathbb{I}_{\substack{
i_{q},\ldots, i_{q+\ell-1}=i \\
i_{q+\ell},\ldots,i_m=j
}}
\\
&\qquad
\sum_{\alpha=-1,\,0,\,1}
A_{\alpha,m,\ell}^{(q,2)}\,
w_{N,n+\alpha}^{(k-1),i_1,\dots,i_m}
\Bigg]
\Bigg),
\end{aligned}
\]
where each coefficient \(A_{\alpha,m,\ell}^{(q,1)},\
A_{\alpha,m,\ell}^{(q,2)},\) is a finite expression built from \(L_f^2\)-bounded derivatives
of the corresponding kernel \(G_K\). In this case, the notation \(A_{\alpha,m,\ell}^{(q,r)}\) is used only to
record the structure and size of these coefficients, rather than their explicit
form. The assumptions of Theorem \ref{thm2} ensure that these coefficients are bounded in \(L^2_f\). Indeed, the regularity condition \(G_K \in H^s(\Omega;\mathbb{R}^d)\cap \mathscr{R}_k(\Omega;\mathbb{R}^d)\) provides the required control on the derivatives of \(G_K\) entering the coefficients \(A_{\alpha,m,\ell}^{(q,r)}\), so that the resulting terms are controlled in the corresponding \(L_f^2\) norm. Note that some of the terms in $R_{N,n}$ involve an additional $\nabla_{v_i}$ derivative, which is why we make use of $A^{(q,1)}_{\alpha,m,\ell}$ for the terms without the additional derivative and of $A^{(q,2)}_{\alpha,m,\ell}$ for the terms with the additional derivative. Absorbing the $\alpha_{q,\ell,m}$'s into \(A_{\alpha,m,\ell}^{(q,1)},\
A_{\alpha,m,\ell}^{(q,2)}\), and identifying the various terms with the decomposition~\eqref{eq:Remainder_decomposition} and grouping all terms
with the same number of exterior divergences yields the stated expressions for
\(R_{N,n}^{(k)}\) and \(R_{N,n}^{(k),i_1,\dots,i_q}\).
\end{proof}
Following a similar argument as in Lemmas~\ref{eq:lemma1} and~\ref{lemma3}, we obtain the following result for the last two terms on the right--hand side of~\eqref{eq:Duhamel_rep}. We recall our definitions
\[
\mathcal J[\bar C_{N,n}]=\sum_{j=1}^n\,\int_{\mathcal{D}} V_f(z_\star,z_j)\,\bar C_{N,n}(z_{[n]\setminus \{j\}},z_\star)\,f(z_\star)\,dz_\star,
\]
and
\[
\begin{aligned}
\bar{\mathcal J}[\bar C_{N,n+2}]
&=\sqrt{(n+1)(n+2)}\\
&\qquad 
\int_{\mathcal D^2}
V_f(z_{n+1},z_{n+2}) \,
\bar C_{N,n+2}(z_{[n+2]}) 
f(z_{n+1})f(z_{n+2})\,
dz_{n+1}dz_{n+2}.
\end{aligned}
\]
One then has the following decompositions of the leading terms
\begin{lemma}\label{lemma2}
Let \(k\geq 1\) and consider a decomposition of the $\bar C_{N,n}$ given by
\begin{equation*}
\bar C_{N,n}
= h^{(k)}_{N,n}
+ \sum_{q=1}^k \sum_{i_1,\dots,i_q=1}^n
\mathrm{div}_{z_{i_1}}\cdots \mathrm{div}_{z_{i_q}}
\, w^{(k),i_1,\dots,i_q}_{N,n}.
\end{equation*} 
We then have the corresponding decomposition
\begin{equation}\label{J_decomposition}
\mathcal J[\bar C_{N,n}]
= \mathcal J^{(k)}[\bar C_{N,n}]
+ \sum_{q=1}^k \sum_{i_1,\dots,i_q=1}^n
\mathrm{div}_{z_{i_1}}\cdots \mathrm{div}_{z_{i_q}}
\,\mathcal J^{(k),i_1,\dots,i_q}[\bar C_{N,n}],
\end{equation}
where for \(q=0\)
\[
\mathcal J^{(k)}[\bar C_{N,n}]
= H^{0,k}+\sum_{j=1}^n \sum_{m=0}^{k-1 }\alpha_{0,m}\, \mathbb{I}_{\{i_{1},\ldots, i_{m}=n \}} 
\int
\left(\nabla^{m} \left(fV_f \right) \right)\,
w^{(k),i_1,\dots,i_m}_{N,n}(z_{[n]\setminus\{j\}},z_\star)
\,dz_\star,
\]
and for every \(1\le q\le k\),
\[
\begin{split}
&\mathcal J^{(k),i_1,\dots,i_q}[\bar C_{N,n}]
= -\mathbb{I}_{q=1}\,H^{1,k}\\
&\quad+\sum_{j=1}^n \sum_{m=q}^k \alpha_{q,m}
\mathbb{I}_{\{i_{q+1},\ldots, i_{m}=n \}} 
\int
\left(\nabla^{m-q} \left(fV_f \right) \right)\,
w^{(k),i_1,\dots,i_m}_{N,n}(z_{[n]\setminus\{j\}},z_\star)
\,dz_\star.
\end{split}
\]
The coefficients $\alpha_{q,m}=\alpha_{q,0,m}$ and $\alpha_{q,l,m}$ are defined in Lemma~\ref{eq:lemma1}. We also denote 
\[\begin{split}
H^{0,k}=&\alpha_{0,k}\,\sum_{j=1}^n \mathbb{I}_{i_1,\ldots,i_k=n}\,\int \nabla^{k-1}(K(x_\star-x_j)\,\nabla_{z_\star}^2 f(z_\star))\,w^{(k),i_1,...,i_k}_{N,n}(z_{[n]\setminus\{j\}}, z_\star)\,dz_\star\\
& -\alpha_{0,k}\,\sum_{j=1}^n \mathbb{I}_{i_1,\ldots,i_k=n}\,\int \nabla^k(K * f(x_\star)\,\nabla_{z_\star} f(z_\star))\,w^{(k),i_1,...,i_k}_{N,n}(z_{[n]\setminus\{j\}}, z_\star)\,dz_\star,
\end{split}\]
and
\[
\begin{aligned}
H^{1,k} =\;&
\alpha_{0,k}\,\sum_{j=1}^n \mathbb{I}_{i_1=j;\;\tilde i_1,\ldots,\tilde i_k=n}\,\int
\nabla^{k-1}\!\left(
K(x_\star-x_j)\,
\nabla_{z_\star} f(z_\star)
\right)
w^{(k),\tilde i_1,\ldots,\tilde i_k}_{N,n}
\big(z_{[n]\setminus\{j\}},z_\star\big)\,
dz_\star.
\\
\end{aligned}
\]
Similarly, one obtains a decomposition
\begin{equation}\label{J_decomposition2}
\bar{\mathcal J}[\bar C_{N,n+2}]
= \bar{\mathcal J}^{(k)}[\bar C_{N,n+2}]
+ \sum_{q=1}^k \sum_{i_1,\dots,i_q=1}^n
\mathrm{div}_{z_{i_1}}\cdots \mathrm{div}_{z_{i_q}}
\,\bar{\mathcal J}^{(k),i_1,\dots,i_q}[\bar C_{N,n+2}],
\end{equation}
where for \(q=0\), 
\[
\begin{aligned}
\bar{\mathcal J}^{(k)}[\bar C_{N,n+2}]
&= \sqrt{(n+1)(n+2)} \Bigg(
\sum_{m=1}^{k}\sum_{\ell=0}^{m}
\alpha_{0,\ell,m}\mathbb{I}_{\substack{
i_1,\ldots,i_\ell=n+1 \\
i_{\ell+1},\ldots,i_m=n+2
}}
\\
&\qquad 
\int
\bigl(\nabla^{m}(fV_f^\delta)\bigr)\,
w_{N,n+2}^{(k),i_1,\ldots,i_m}\,
dz_{n+1}dz_{n+2}
\\
&\qquad
+\int V_f^\delta h_{N,n+2}^{(k)}
f(z_{n+1})f(z_{n+2})\,
dz_{n+1}dz_{n+2}
\\
&\qquad
+\int W_f^\delta \bar C_{N,n+2}
f(z_{n+1})f(z_{n+2})\,
dz_{n+1}dz_{n+2}
\Bigg),
\end{aligned}
\]
and for all \(1 \leq q \leq k\) 
\[
\begin{aligned}
&\bar{\mathcal J}^{(k),i_1,\dots,i_q}[\bar C_{N,n+2}]
= \sqrt{(n+1)(n+2)} \,\sum_{m=q}^k\sum_{\ell=0}^{m-q} \alpha_{q,\ell,m}\,\mathbb{I}_{\substack{i_{q+1},\ldots, i_{q+\ell}=n+1 \\ i_{q+\ell+1},\ldots,i_m=n+2}}\\
&\qquad\qquad\qquad\int
\big(\nabla^{\,m-q}(fV_f^\delta)\big)(z_{n+1},z_{n+2})\,
w_{N,n+2}^{(k),i_1,\dots,i_m}(z_{[n+2]})\,
dz_{n+1}dz_{n+2}.
\end{aligned}
\]
\end{lemma}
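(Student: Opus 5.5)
The plan is to substitute the decomposition of $\bar C_{N,n}$ (resp. $\bar C_{N,n+2}$) directly into $\mathcal J$ and $\bar{\mathcal J}$. Each term $\mathrm{div}_{z_{i_1}}\cdots\mathrm{div}_{z_{i_q}} w^{(k),i_1,\dots,i_q}$ is then sorted according to whether its divergence indices are \emph{exterior} (free variables $z_i$ with $i\neq j$, which commute with the $z_\star$ integral) or \emph{interior} (the integrated variable $z_\star$, or $z_{n+1},z_{n+2}$ for $\bar{\mathcal J}$). Interior divergences are moved onto the weight $fV_f$ (resp. $fV_f^\delta$) by integration by parts, which is legitimate because $f$ is integrated against. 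The combinatorics is exactly that of Lemma~\ref{eq:lemma1}: out of $m$ divergence indices, choose $q$ exterior ones (factor $\binom{m}{q}$). In the $\bar{\mathcal J}$ case, additionally choose $\ell$ of the remaining $m-q$ to be $n+1$ and the rest $n+2$ (factor $\binom{m-q}{\ell}$). The sign $(-1)^{m-q}$ comes from the integrations by parts. This reproduces the coefficients $\alpha_{q,\ell,m}$, and $\alpha_{q,m}=\alpha_{q,0,m}$ in the one-integrated-variable case. Regrouping by the number $q$ of exterior divergences gives \eqref{J_decomposition} and \eqref{J_decomposition2}.

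For $\bar{\mathcal J}$, I would first split $V_f=V_f^\delta+W_f^\delta$ and keep the $W_f^\delta$ part undecomposed, as $\int W_f^\delta \bar C_{N,n+2} f f$. This term is small, of order $\delta^s$, and needs no derivatives. The $V_f^\delta$ part gets the full decomposition, since $V_f^\delta$ is smooth for fixed $\delta$ and all $\nabla^{m-q}(fV_f^\delta)$ make sense. The $h^{(k)}_{N,n+2}$ piece with no divergence gives $\int V_f^\delta h^{(k)}_{N,n+2}ff$, and the rest gives the stated sums. This is a direct generalization of the $k=1$ computation in Lemma~\ref{lem:decomposition-h1-w1}.

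For $\mathcal J$, one cannot mollify, because there is no small prefactor. The main obstacle is the terms where \emph{all} $k$ divergences are interior, i.e.\ fall on $z_\star$, since $\nabla^k(fV_f)$ would require $k$ derivatives of $K$, which we do not have in $L^\infty$. I would handle them as in the $k=1$ step with $I_1-I_2$. Write
\[
fV_f=K(x_\star-x_j)\cdot\nabla_{v_\star}f-K\ast f(x_\star)\cdot\nabla_{v_\star}f,
\]
and place at most $k-1$ derivatives on $K(x_\star-x_j)$. The $k$-th derivative is either put on $\nabla_{v_\star}f$ when it falls there, which gives the first part of $H^{0,k}$ with $\nabla^2_{z_\star}f$, or converted via $\nabla_{x_\star}K(x_\star-x_j)=-\nabla_{x_j}K(x_\star-x_j)$ into an exterior derivative in $z_j$. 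The latter produces $H^{1,k}$ in the $q=1$ component, with index $i_1=j$ and the minus sign. The $K\ast f$ part is smooth by the assumption $K\ast f\in W^{k+1,\infty}$, so all $k$ derivatives can fall on it, giving the second part of $H^{0,k}$. For $m\le k-1$, the interior derivatives can go directly onto $fV_f$, which yields the displayed sums.

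The only real care needed is bookkeeping: making the product-rule expansion of $\nabla^{k-1}$ and the transfer of the last derivative consistent with the coefficient $\alpha_{0,k}$, and checking by symmetry of the $w$'s that relabeling indices is harmless.
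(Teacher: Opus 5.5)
Your proposal is correct and is essentially the paper's proof. You substitute the decomposition, integrate the interior divergences by parts onto $fV_f$ (resp.\ $fV_f^\delta$) using the $\alpha_{q,\ell,m}$ counting of Lemma~\ref{eq:lemma1}, and mollify only in $\bar{\mathcal J}$ while keeping $\int W_f^\delta \bar C_{N,n+2} f f$ intact. For the all-interior $m=k$ term of $\mathcal J$, you keep the $K\ast f$ part with all $k$ derivatives and split one derivative via $\nabla_{z_\star}\bigl(K(x_\star-x_j)\nabla f\bigr)=K(x_\star-x_j)\nabla^2 f-\nabla_{z_j}\bigl(K(x_\star-x_j)\nabla f\bigr)$, which produces $H^{0,k}$ and $-H^{1,k}$, exactly as in the paper.

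The only thing I would rephrase is your motivation for not mollifying $\mathcal J$: the relevant difference from $\bar{\mathcal J}$ is not a missing small prefactor, but that $z_j$ is a free variable in $\mathcal J$, so a derivative on $K(x_\star-x_j)$ can be exported exactly. In $\bar{\mathcal J}$ both arguments of $V_f$ are integrated, so the same identity only moves the derivative to the other interior variable.
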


\begin{proof} The proof follows the same combinatorial analysis as Lemma~\ref{lemma3}. We do have to decompose further however the term which would correspond to $m=k$ in the formula for $\mathcal{J}^{(k)}[\bar{C}_{N,n}]$ with $q=0$, just like we did in Lemma~\ref{lem:decomposition-h1-w1}. Namely, we focus on 
\[
\int \nabla^{k}(fV_f )(z_\star,z_j)\,
w^{(k),i_1,...,i_k}_{N,n}(z_{[n]\setminus\{j\}}, z_\star)\,dz_\star.
\]
The component of \(V_f\) containing \(K * f\) is already sufficiently regular and is kept in the derivative-free part. This produces
\[
I=-\sum_{j=1}^n \mathbb{I}_{i_1,\ldots,i_k=n}\,\int \nabla^k(K * f(x_\star)\,\nabla_{z_\star} f(z_\star))\,w^{(k),i_1,...,i_k}_{N,n}(z_{[n]\setminus\{j\}}, z_\star)\,dz_\star.
\]
We further write that
\begin{equation}
\nabla_{z_\star} (K(x_\star-x_j)\,\nabla_{z_\star} f(z_\star))=K(x_\star-x_j)\,\nabla_{z_\star}^2 f(z_\star)-\nabla_{z_j}(K(x_\star-x_j)\,\nabla_{z_\star} f(z_\star)).\label{spreadderivative}
\end{equation}
The first term yields
\[
II=\sum_{j=1}^n \mathbb{I}_{i_1,\ldots,i_k=n}\,\int \nabla^{k-1}(K(x_\star-x_j)\,\nabla_{z_\star}^2 f(z_\star))\,w^{(k),i_1,...,i_k}_{N,n}(z_{[n]\setminus\{j\}}, z_\star)\,dz_\star,
\]
which is combined with $I$ to form the term $H^{0,k}$.\\

\noindent
Finally, the second term on the right-hand side of Eq.~\eqref{spreadderivative} produces one derivative outside the integral, since the other functions in the integral do not depend on $x_j$. This hence adds one term to the case $q=1$, namely 
\[
\begin{aligned}
&
\sum_{j=1}^n
\nabla_{z_j}\,
\mathbb{I}_{\tilde i_1,\ldots,\tilde i_k=n}
\int
\nabla^{k-1}\!\left(
K(x_\star-x_j)\,
\nabla_{z_\star} f(z_\star)
\right)
w^{(k),\tilde i_1,\ldots,\tilde i_k}_{N,n}
\big(z_{[n]\setminus\{j\}},z_\star\big)\,
dz_\star,
\end{aligned}
\]
which leads to the term $H^{1,k}$.
\end{proof}
%
\subsection{Proof of Theorem~\ref{thm2} }
Having established the necessary setup above, we now proceed to the proof of Theorem~\ref{thm2}.
\begin{proof}
The proof is based on deriving the following bounds for the time-integrated components for \(n\ge k\) and \(1\le q\le k-1\) 
\begin{equation}\label{boundtimeintegrated}
\begin{split}
&\int_t^T \|w^{(k),i_1,...,i_k}_{N,n} (\tau)\|_{L^2_f}d\tau \lesssim {C}_k\, C^{nk}\, N^{-\frac{k}{2}},\\ 
&\int_t^T \|w^{(k),i_1,...,i_q}_{N,n}(\tau) \|_{L^2_f}d\tau \lesssim {C}_k\,C^{nk}\, N^{-\frac{k}{2}}, \\
&\int_t^T \|h^{(k)}_{N,n}(\tau) \|_{L^2_f}d\tau \lesssim {C}_k\,C^{nk}\, N^{-\frac{s}{2}},
\end{split}
\end{equation}
together with the following bounds for \(n<k\) 
\begin{equation}
 \int_t^T \left(\|w_{N,n}^{(k),i_1,...,i_k}(\tau) \|_{L^2_f}+  \|w_{N,n}^{(k),i_1,...,i_q}(\tau) \|_{L^2_f}+\|h_{N,n}^{(k)}(\tau) \|_{L^2_f}\right)\,d\tau\lesssim  C_k C^{nk}\,N^{-\max(k_0,n)/2}.\label{boundtimeintegratedk0}
\end{equation}
First, we proceed by induction by showing for all $\ell<k$ that
\begin{equation}\label{boundtimeintegratedl}
\begin{split}
&\int_t^T \left(\|w^{(\ell),i_1,...,i_\ell}_{N,n} (\tau)\|_{L^2_f}+ \|w^{(\ell),i_1,...,i_q}_{N,n}(\tau) \|_{L^2_f}+\|h^{(\ell)}_{N,n}(\tau) \|_{L^2_f}\right)\,d\tau \\& \lesssim {C}_\ell\, C^{n\ell}\, N^{-\frac{\min(\ell,\max(k_0,n))}{2}}.\\ 
\end{split}
\end{equation}
Then at the final level \(\ell=k\), we prove \eqref{boundtimeintegratedk0} for $n<k$, and \eqref{boundtimeintegrated} for $n \geq k$, with a slower decay in \(N\) when \(s<k\).

\medskip

\noindent\textbf{Base case (\(\ell=1\)).} The estimate for \(\ell=1\) has already been established 
in Proposition~\ref{component_bounds}, by decomposing $\bar{C}_{N,n}$ as in Lemma \ref{lem:decomposition-h1-w1}. The only difference is that, at the higher regularity level \(k>1\), there is no need to decompose the first derivative \(\nabla_{z_{i_1}}V_f\) as in \eqref{eq:V_f_decomposition}, since it is already bounded in $L^2_f$. Therefore the result \eqref{bound:a_Nn} holds with 1 instead of $s$. This proves \eqref{boundtimeintegratedl} with $\ell=1$.

\medskip

\noindent\textbf{Inductive step.}
For each \(2\leq \ell\leq k-1\), the argument proceeds as in the proof of Proposition~\ref{component_bounds}: We decompose the dual cumulants in the leading terms into their components up to order \(\ell\), while the dual cumulants in the remainder terms are decomposed only up to order \(\ell-1\). The bounds obtained at the preceding iteration for the \((\ell-1)\)-st components of the remainder then yield the corresponding new bounds for the \(\ell\)-th components of the leading terms. The final passage from \(k-1\) to \(k\) follows the same general strategy but requires additional care. We therefore present this last step in detail. assume that~\eqref{boundtimeintegratedl} holds for \(\ell=k-1\). We denote the time-integrated components for every $1 \le q \le k$
\[
W^{(k),q}_{N,n}(t)
:= \sup_{i_1,\ldots,i_q=1\dots n} \int_t^T \|w^{(k),i_1,\dots,i_q}_{N,n}(\tau)\|_{L^2_f}\,d \tau,
\qquad
H^{(k)}_{N,n}(t)
:= \int_t^T \|h^{(k)}_{N,n}(\tau)\|_{L^2_f}\,d\tau.
\]

\medskip

\noindent \underline{Step 1:} For $n \geq k$, we start by showing that we have the following estimates for $q\geq 1$ 
\begin{equation}
\label{hierarchy_bound_2}
\begin{aligned}
\frac{d}{dt} W^{(k),q}_{N,n}(t)
\geq\;&
-C_k\, n\,\sup_{m\geq q}\,W^{(k),m}_{N,n}-C_k\, n\,\sup_{m\geq q}\,W^{(k),m}_{N,n+2}-C_k\,C^{(k-1)\,n}\,n^{3/2}\, {N^{-k/2}},
\end{aligned}
\end{equation}
and for $q=0$
\begin{equation} \label{hierarchy_bound_3}
\begin{aligned}
\frac{d}{dt} H^{(k)}_{N,n}(t)\geq &-\|\bar{C}_{N,n}(T)\|_{L^2_f} -C_k\,n\,\sup_{0<m\leq k}\,W^{(k),m}_{N,n}-C_k\,n\,\delta^{s-k}\,\sum_{m\leq k} W^{(k),m}_{N,n+2}-n\,\,H^{(k)}_{N,n+2}\\
&-n\,\delta^s\,(T-t)-C_k\,C^{n\,(k-1)}\,{n^{3/2}}\,{N^{-k/2}}.
\end{aligned}
\end{equation}
We start by considering the remainder decomposition~\eqref{eq:Remainder_decomposition} from Lemma~\ref{lemma3} and the decompositions~\eqref{J_decomposition} and~\eqref{J_decomposition2} of \(\mathcal{J}[\bar C_{N,n}]\) and \(\bar{\mathcal{J}}[\bar C_{N,n+2}]\) from Lemma~\ref{lemma2}. Let us gather together the terms with the same number of derivatives by denoting 
\begin{equation} \label{g_n_decomp_k}
g_n^{(k)} :=
R^{(k)}_{N,n}
+ \mathcal{J}^{(k)}[\bar{C}_{N,n}]
+ \bar{\mathcal{J}}^{(k)}[\bar{C}_{N,n+2}] ,
\end{equation}
and for $1 \le q \le k$
\begin{equation}
g_n^{(k),i_1,\dots,i_q} \label{g_n_decomp}
:=
R^{(k),i_1,\dots,i_q}_{N,n}
+\mathcal{J}^{(k),i_1,\dots,i_q}[\bar{C}_{N,n}]
+\bar{\mathcal{J}}^{(k),i_1,\dots,i_q}[\bar{C}_{N,n+2}] .
\end{equation}
Then by Lemma~\ref{lem:tensor_all_derivatives}, the solution to the truncated hierarchy \eqref{eq:hierarchy_equation} has the specific form
\[
\begin{split}
    \bar{C}_{N,n}(t) = &G_{t,T}^{(n)} \bar{C}_{N,n}(T) - \int_t^T G_{t,\tau}^{(n)} g_n^{(k)}(\tau) d\tau\\
    &- \int_t^T \sum_{q=1}^k \sum_{i_1,...,i_q=1}^n \sum_{s \in S(i_1,...,i_q)} \nabla^s \left( G_{t,\tau}^{(n),s,i_{[q]}} g_n^{(k),i_1,...,i_q} (\tau)\right) d\tau,
  \end{split}  
  \]
for the $L^2$ bounded operators $G^{(n)}$, $\{ G^{(n),s,i_{[q]}}\}_{s,q}$ given by Lemma~\ref{lem:tensor_all_derivatives}. We then derive formulas for the $w^{(k),i_1,\ldots,i_q}_{N,n}$ by identifying on the left-hand and right-hand sides the terms with the same derivatives in front. For $q=k$, we just isolate the terms involving exactly $k$ derivatives, yielding
\[
w^{(k),i_1,...,i_k}_{N,n}(t) =-\int_t^T
\Big(
G_{t,\tau}^{(n),s,i_{[k]}}\,
g_n^{(k),i_1,\dots,i_k}(\tau) 
\Big)d\tau,\quad s=(i_1,\ldots,i_k).
\]
Similarly, for each $1 \le q \le k-1$, we obtain the contribution corresponding
to $r=q$, together with additional terms arising from the cases
$r=q+1,\dots,k$ in which exactly $|s|=q$ derivatives act. We first observe that we can identify
\[
\begin{aligned}
\sum_{i_1,\dots,i_q=1}^n 
\nabla_{z_{i_1}\cdots z_{i_q}}
w^{(k),i_1,\dots,i_q}_{N,n}(t)
&=-
\sum_{r=q}^k\sum_{i_1,\dots,i_r=1}^n
\int_t^T
\sum_{\substack{s\in S(i_1,\dots,i_r)\\ |s|=q}}
\nabla^s\!\Big(
G_{t,\tau}^{(n),s,i_{[r]}}\,
g_n^{(k),i_1,\dots,i_r}(\tau)
\Big)\, d\tau.
\end{aligned}
\]
This leads to the further choice
\[
\begin{aligned}
w^{(k),i_1,\dots,i_q}_{N,n}(t)
&=-
\sum_{r=q}^k
\sum_{i_{1},\dots,i_r=1}^n
\sum_{\substack{s\in S(i_1,\dots,i_r)\\ s=(i_1,\ldots,i_q)}}
\int_t^T
G_{t,\tau}^{(n),s,i_{[r]}}\,
g_n^{(k),i_1,\dots,i_r}(\tau)\, d\tau .
\end{aligned}
\]
Similarly, the component with no derivatives, i.e.\ $q=0$, yields
\[
\begin{aligned}
h^{(k)}_{N,n}(t)
&=
G_{t,T}^{(n)} \bar{C}_{N,n}(T)
- \int_t^T G_{t,\tau}^{(n)} g_n^{(k)}(\tau)\, d\tau
- \sum_{r=1}^k
\sum_{i_1,\dots,i_r=1}^n
\int_t^T
G_{t,\tau}^{(n),0,i_{[r]}}\,
g_n^{(k),i_1,\dots,i_r}(\tau)\, d\tau .
\end{aligned}
\]
Using identities~\eqref{g_n_decomp_k} and~\eqref{g_n_decomp} we obtain, up to permutations of the \(w\)'s 
\[
\begin{aligned}
w^{(k),i_1,\dots,i_k}_{N,n}(t)
&=-
\int_t^TG_{t,\tau}^{(n),s,i_{[k]}}\,
\Bigg(
R^{(k),i_1,...,i_k}_{N,n}
+
\mathcal{J}^{(k),i_1,...,i_k}[\bar{C}_{N,n}]
+
\bar{\mathcal{J}}^{(k),i_1,\dots,i_k}[\bar{C}_{N,n+2}]
\Bigg) \,d\tau,
\end{aligned}
\]
while for $1\le q \le k-1$ we obtain, 
\[
\begin{aligned}
&w^{(k),i_1,\dots,i_q}_{N,n}(t)= -\sum_{r=q}^{k}
\sum_{i_{1},\dots,i_r=1}^{n}
\sum_{\substack{s\in S(i_1,\dots,i_r)\\ s=(i_1,\ldots,i_q)}}
\\ & \qquad \qquad
\int_t^T
G_{t,\tau}^{(n),s,i_{[r]}}
\Bigg[R^{(k),i_1,...,i_r}_{N,n}+
\mathcal J^{(k),i_1,\dots,i_r}[\bar C_{N,n}]+
\bar{\mathcal J}^{(k),i_1,\dots,i_r}[\bar C_{N,n+2}]
\Bigg]
\,d\tau,
\end{aligned}
\]
and for $q=0$
\[
\begin{aligned}
h^{(k)}_{N,n}(t)
&=
G^{(n)}_{t,T}\,\bar{C}_{N,n}(T)
-
\int_t^T
G_{t,\tau}^{(n)}
\Bigg[
R^{(k)}_{N,n}+\mathcal{J}^{(k)}[\bar{C}_{N,n}] + \bar{\mathcal{J}}^{(k)}[\bar{C}_{N,n+2}]
\Bigg]\,
d\tau
\\[0.8em]
&\quad
-
\sum_{r=1}^k
\sum_{i_{1},\dots,i_r=1}^n
\int_t^T
G_{t,\tau}^{(n),0,i_{[r]}}
\Bigg[
R^{(k),i_1,...,i_r}_{N,n}
+ \mathcal J^{(k),i_1,\dots,i_r}[\bar C_{N,n}]+
\bar{\mathcal J}^{(k),i_1,\dots,i_r}[\bar C_{N,n+2}]
\Bigg]
\,d\tau. \\
\end{aligned}
\]
Now we simply substitute the terms $R_{N,n}^{(k),i_1,...,i_r}$, $\mathcal{J}^{(k),i_1,...,i_r}[\bar{C}_{N,n}]$, and $\bar{\mathcal{J}}^{(k),i_1,...,i_r}[\bar{C}_{N,n+2}]$, which are explicitly defined in Lemmas~\ref{lemma3} and~\ref{lemma2} into the equations above.
Absorbing all resulting constants into some constant $C_k$ depending only on $k$, we first obtain that
\[
\|H^{0,k}\|_{L^2_f}\leq C_k\,\sum_{j=1}^n \mathbb{I}_{i_1,\ldots, i_k=n}\|
w^{(k),i_1,\ldots,i_k}_{N,n}\|_{L^2_f},
\]
and
\[
\| H^{1,k}\|_{L^2_f} \leq C_k \sum_{j=1}^n \mathbb{I}_{i_1=j;\;\tilde i_1,\ldots,\tilde i_k=n}\|
w^{(k),\tilde i_1,\ldots,\tilde i_k}_{N,n}\|_{L^2_f}.\]
As a consequence,
\[
\|\mathcal{J}^{(k)}[\bar{C}_{N,n}]\|_{L^2_f}\leq C_k\,\sum_{j=1}^n \sum_{m=0}^k \mathbb{I}_{i_1,\ldots,i_m=n}\|
w^{(k),i_1,\ldots,i_m}_{N,n}\|_{L^2_f}
\]
since $\alpha_{0,m}=1$. Note that $\alpha_{q,0,m}=\binom{m}{q}\leq C_k$ for $m\leq k$, and hence
\[
\begin{split}
\|\mathcal{J}^{(k),i_1,...,i_q}[\bar{C}_{N,n}]\|_{L^2_f}&\leq C_k\,\sum_{j=1}^n \sum_{m=q}^k \mathbb{I}_{i_{q+1},\ldots,i_m=n}\|
w^{(k),i_1,\ldots,i_m}_{N,n}\|_{L^2_f}\\
&+C_k \sum_{j=1}^n \mathbb{I}_{q=1;\;i_1=j;\;\tilde i_1,\ldots,\tilde i_k=n}\|
w^{(k),\tilde i_1,\ldots,\tilde i_k}_{N,n}\|_{L^2_f}.
\end{split}
\]
Recall that $K\in H^s$ with $k-1<s\leq k$, along with the assumptions on $K$ and $f$, we may invoke the following estimates on the weighted
derivatives of $f$ and on the mollified interaction kernels:
\begin{align*}
\big\|\nabla^{k}(f\,V_f^\delta)\big\|_{L^2_f} &\lesssim \delta^{\,s-k},\\[4pt]
\big\|W_f^\delta\big\|_{L^2_f} &\lesssim \delta^{\,s}.
\end{align*}
This leads to
\[
\begin{aligned}
&\|\bar{\mathcal J}^{(k)}[\bar C_{N,n+2}]\|_{L^2_f} \leq C_k\,  n \sum_{m=0}^{k} \sum_{l=0}^{m} \delta^{s-k}\,\mathbb{I}_{\substack{i_{1},\ldots, i_{l}=n+1 \\ i_{l+1},\ldots,i_m=n+2}}\, 
\|w_{N,n+2}^{(k),i_1,\dots,i_m}\|_{L^2_f}+ n \,
\|h_{N,n+2}^{(k)}\|_{L^2_f} + n\,\delta^s,
\end{aligned}
\]
and for all \(1 \leq q \leq k\) 
\[
\begin{aligned}
&\|\bar{\mathcal J}^{(k),i_1,\dots,i_q}[\bar C_{N,n+2}]\|_{L^2_f}
\leq C_k\,n\,\sum_{m=q}^k\sum_{l=0}^{m-q} \,\mathbb{I}_{\substack{i_{q+1},\ldots, i_{q+l}=n+1 \\ i_{q+l+1},\ldots,i_m=n+2}}\,
\|w_{N,n+2}^{(k),i_1,\dots,i_m}\|_{L^2_f}.
\end{aligned}
\]
We perform similar steps on the remainder and bounding again all coefficients, including the $A_{\alpha,m,\ell}^{(r,1)}$ and $A_{\alpha,m,\ell}^{(r,2)}$, by some abstract constant~$C_k$, we find that 
\[
{\|}R_{N,n}^{(k)}{\|_{L^2_f}}
\leq  \tfrac{C_k}{(nN)^{\frac12}}\,
\sum_{m=0}^{k-1} \;\sum_{l=0}^{m}\sum_{i\neq j} \mathbb{I}_{\substack{i_{1},\ldots, i_{l}=i \\ i_{l+1},\ldots,i_m=j}}\,\sum_{\alpha=-1,\,0,\,1}
\|w_{N,n+\alpha}^{(k-1),i_1,\dots,i_{m}}\|_{L^2_f},
\]
and for every \(1\le q\le k\),
\[
\begin{aligned}
\|R_{N,n}^{(k),i_1,\dots,i_q}\|_{L^2_f}
\leq \tfrac{C_k}{(nN)^{\frac12}}\,\Bigg(
&\sum_{m=q}^{k-1} \sum_{l=0}^{m-q}\sum_{i\neq j} \mathbb{I}_{\substack{i_{q+1},\ldots, i_{q+l}=i \\ i_{q+l+1},\ldots,i_m=j}}\,\sum_{\alpha=-1,\,0,\,1} 
\|w_{N,n+\alpha}^{(k-1),i_1,\dots,i_{m}}\|_{L^2_f}\\
&+\sum_{m=q-1}^{k-1} \sum_{l=0}^{m-q+1}\sum_{i\neq j}
\mathbb{I}_{\substack{
i_q,\ldots,i_{q+l-1}=i\\
i_{q+l},\ldots,i_m=j
}}\,\sum_{\alpha=-1,\,0,\,1} 
\|w_{N,n+\alpha}^{(k-1),i_1,\dots,i_{m}}\|_{L^2_f}
\Bigg).
\end{aligned}
\]
Applying the $L^2_f$ norm to both sides of the above identities, we can estimate each term on the right-hand side
using the $L^2_f$--boundedness of the operators $G^{(n),s,i_{[r]}}$ and $G^{(n)}$. Summing over the remaining indices by taking the supremum as in the definition of the $W^{(k),q}_{N,n}$ , we obtain the following system of inequalities for $q\geq 1$
\[
\begin{aligned}
\frac{d}{dt} W^{(k),q}_{N,n}(t)
\geq\;&
-C_k\, n\,\sup_{m\geq q}\,W^{(k),m}_{N,n}-C_k\, n\,\sup_{m\geq q}\,W^{(k),m}_{N,n+2}-C_k\,\sup_{m\geq q-1} \sum_{\alpha=-1,\,0,\,1}\frac{n^{3/2}} {N^{1/2}}\,W^{(k-1),m}_{N,n+\alpha},
\end{aligned}
\]
and for $q=0$
\[
\begin{aligned}
\frac{d}{dt} H^{(k)}_{N,n}(t)\geq &-\|\bar{C}_{N,n}(T) \|_{L^2_f} -C_k\,n\,\sup_{m\leq k}\,W^{(k),m}_{N,n}-C_k\,n\,\delta^{s-k}\,\sum_{0<m\leq k} W^{(k),m}_{N,n+2}-C_kn\,\,H^{(k)}_{N,n+2}\\
&-n\,\delta^s\,(T-t)-C_k\frac{n^{3/2}}{N^{1/2}}\,\sup_{m\leq k-1} \,\sum_{\alpha=-1,\,0,\,1} W^{(k-1),m}_{N,n+\alpha}.
\end{aligned}
\]
To conclude this step, we only have to use the induction assumption on the $W^{(k-1),q}$ corresponding to~\eqref{boundtimeintegrated}. We need to be especially careful with the terms involving $W^{(k-1),m}_{N,n-1}$. Here we consider $n\geq k$, the bound~\eqref{boundtimeintegrated} shows that $W^{(k-1),m}_{N,k-1}\lesssim C_{k-1}\,C^{(k-1)^2}\,N^{-(k-1)/2}$ exactly which multiplied by $N^{-1/2}$ provides the correct bound.

\noindent
\underline{Step 2:} Prove the induction by solving the differential hierarchies in Step~1.
The solution is similar to that of Proposition~\ref{component_bounds}: the procedure is iterative, meaning we first solve~\eqref{hierarchy_bound_2} for $q=k$,
and then use the solution as input for the subsequent inequalities~\eqref{hierarchy_bound_2} for $1\leq q\leq k-1$ and~\eqref{hierarchy_bound_3}. We first prove the claim for $n \geq k$. We start by defining the generating series for \(k \geq 1\)
\[
\begin{aligned}
Z_{N}(t, \rho)
:=\;& 
\sum_{n=k}^{\infty} 
    \rho^{\,n}
    \Bigg(
        W^{(k),k}_{N,n}(t)
        \;+\;
        C_k\,n^{1/2}\,N^{-\frac{k}{2}}
        C^{n(k-1)}
    \Bigg),
\\[6pt]
&\text{with the limitation}\ 
\rho \lesssim \frac{1}{C^{k-1}},
\qquad 
\rho \in (0,1).
\end{aligned}
\]
Using the bound~\eqref{hierarchy_bound_2} for $q=k$, we obtain 
\[
\begin{aligned}
\partial_t Z_N(t, \rho)
&\geq- C_k\, \sum_{n=k}^{\infty} \rho^n\,(n+1)\, \left( W^{(k),k}_{N,n}(t) + W^{(k),k}_{N,n+2}(t) +C^{(k-1)\,n}\,n^{1/2}\, {N^{-k/2}} \right) \\
\end{aligned}
\] 
Now we rewrite the second sum in the following way
\begin{align*}
\partial_t Z_N(t,\rho)
&\geq -C_k\, \rho
\sum_{n=k}^{\infty} (n+1) \rho^{n-1} (
W^{(k),k}_{N,n}(t)+C_k\,n^{1/2}\,N^{-\frac{k}{2}}
        C^{n(k-1)}) \\
&\quad - C_k\, \frac{1}{\rho} 
\sum_{n=k}^{\infty} (n+1) \rho^{n+1}
W^{(k),k}_{N,n+2}(t) \\
& \quad \geq -C_k ( \rho + \frac{1}{\rho} ) \partial_{\rho} Z_N.
\end{align*} 
Since we only consider bounded $\rho$, this can be simplified into
\[
\partial_t Z_N(t, \rho)
\geq -C_k\, \frac{1}{\rho}\, \partial_{\rho} Z_N.
\]
As before, this differential inequality is easily solved using the method of characteristics. Define
\[
\frac{d}{dt}\bar\rho(t,s,\rho)=\frac{C_k}{\bar\rho},\quad\bar\rho(s,s,\rho)=\rho.
\]
This immediately gives
\[
\bar\rho^{2}=\rho^2+2\,C_k\,(t-s).
\]
On the other hand, we also have that for $t_1\geq t_2$
\[
Z_{N}(t_2, \bar \rho(t_2,s,\rho)) \leq Z_{N}(t_1, \bar\rho(t_1,s,\rho)),
\] 
and consequently
\[
Z_{N}(t, \sqrt{\rho^2 - 2C_k\,(T-t)}) \leq Z_{N}(T, \rho).
\]
Choose \( \Delta t_k^{(k)} =  \min\left(\Delta t_{k-1}, \frac{\rho^2}{4\,C_k}\right),\) and assume \( T - t \leq \Delta t_k^{(k)} \) so that, 
\[
 \sqrt{\rho^2 - 2C_k\,(T-t)} \geq \sqrt{\rho^{2} - 2C_k\, \Delta t_k^{(k)} }\geq \frac{\rho}{\sqrt{2}} .
 \]
We also have the following bound at $t=T$
\begin{align*}
Z_{N}(T, \rho) 
&\lesssim \sum_{n=k}^\infty \rho^n \left(  W^{(k),k}_{N,n} (T) 
+ C_k\,n^{1/2} C^{n\,(k-1)} N^{-\frac{k}{2}}   \right) \\
&\leq C_k\,N^{-\frac{k}{2}}\,\sum_{n=k}^\infty \rho^n\,C^{n(k-1)}\,(n+1)  \leq C_k\,N^{-\frac{k}{2}} ,
\end{align*}
for a fixed choice of $\rho<\left(\frac{1}{C}\right)^{k-1}$. Putting everything together, we get for all \(t \in [T-\Delta t_k^{(k)},T]\):
\[
\sum_{n=k}^\infty \left( \frac{\rho}{\sqrt{2}} \right)^n\,W^{(k),k}_{N,n} (t)\leq Z_{N}(T, \rho) 
\lesssim C_k\,N^{-\frac{k}{2}},
\]
which directly yields
\[
  W^{(k),k}_{N,n} (t)  \lesssim \left(\frac{\sqrt{2}}{\rho}\right)^n C_k\,N^{-\frac{k}{2}}\lesssim C_k\,C^{nk}\,N^{-k/2}.
\]
This yields the desired first bound for all \(n \geq k\) and all \(t \in [T-\Delta t_k^{(k)},\,T]\).\\

\noindent
For a general \(1 \le q \le k-1\), the estimate is obtained in the same manner, by substituting the already obtained estimates at higher levels $q+1,...,k$, so that~\eqref{hierarchy_bound_2} reduces to 
\[ 
\partial_tW^{(k),q}_{N,n}(t) \geq -C_k\,n\,\Bigg( C^{n\,(k-1)}\,N^{-\frac{k}{2}}+ W_{N,n}^{(k),q}(t) + W_{N,n+2}^{(k),q} (t) \Bigg) .
\]
This allows us to estimate $W^{(k),q}_{N,n}$ through a similar generating function. For every $q$, we obtain a possibly different time length $\Delta t_k^{(q)}$, and we set
\(\Delta t_k = \min_{0 \leq q \leq k} \left(\Delta t_{k}^{(q)}\right),\) which is the smallest time interval that is admissible for all \(0 \le q \le k\).\\

\noindent
At the final level $q=0$, we repeat the same argument but emphasize that~\eqref{hierarchy_bound_3} includes additional terms in $\delta$. Optimizing with $\delta := N^{-\frac{1}{2}}$ leads to the corresponding bound and concludes the proof.\\

\noindent
As for the case \(1\leq n<k\), set
\[
r_n:=\max\{k_0,n\}.
\]
The arguments of Steps 1 and 2 apply without modification, except that
the size of the remainder terms must be compared with the terminal-data
scale \(N^{-r_n/2}\). By the induction hypothesis at level \(k-1\),
for every \(\alpha\in\{-1,0,1\}\), the components entering the
remainder satisfy
\[
W_{N,n+\alpha}^{(k-1),m}
\lesssim
C_{k-1}C^{n(k-1)}
N^{-\frac{\min\{k-1,\max(k_0,n+\alpha)\}}{2}}.
\]
Since the remainder carries an additional factor \(N^{-1/2}\), its
contribution is bounded by
\[
N^{-\frac{
1+\min\{k-1,\max(k_0,n+\alpha)\}
}{2}}.
\]
The weakest bound corresponds to \(\alpha=-1\). Since \(n<k\), we have
\[
1+\min\{k-1,\max(k_0,n-1)\}
\geq \max\{k_0,n\}=r_n.
\]
Indeed, if \(n>k_0\), the left-hand side equals \(n\), whereas if
\(n\leq k_0\), it is at least \(k_0\). Thus every remainder term is
bounded by \(N^{-r_n/2}\), or better. Moreover, by assumption,
\[
\|\bar C_{N,n}(T)\|_{L_f^2}
\lesssim C^n N^{-r_n/2}.
\]
Applying the same generating-function argument as above therefore gives
\[
\int_t^T
\left(
\|w_{N,n}^{(k),i_1,\ldots,i_k}(\tau)\|_{L_f^2}
+
\|w_{N,n}^{(k),i_1,\ldots,i_q}(\tau)\|_{L_f^2}
+
\|h_{N,n}^{(k)}(\tau)\|_{L_f^2}
\right)\,d\tau
\lesssim
C_k C^{nk}N^{-r_n/2},
\]
which is precisely \eqref{boundtimeintegratedk0}.\\

\noindent

Finally, the pointwise estimates follow by substituting the time-integrated estimates obtained above into \eqref{hierarchy_bound_2} and \eqref{hierarchy_bound_3}. This yields the result.
\end{proof}

\section{Proof of Theorem~\ref{thm1}}
We first prove the second assertion, which gives the optimal \(N^{-1}\) rate. The first assertion follows by the same argument using the \(k=1\) estimates.
\begin{proof} 
We recall relation \eqref{prop_cumulants} between the second--order dual cumulant and the marginals. After rescaling the dual cumulants, we obtain
\[
\int_{\mathcal D^m}\psi^{\otimes m}
\left(F_{N,m}(T)-f^{\otimes m}(T)\right)=
-\frac{N}{\sqrt{\binom{N}{2}}}
\int_0^T
\left(
\int_{\mathcal D^2}
V_f\,\bar C_{N,2}\,f^{\otimes2}
\right)dt.
\]
For the terminal data of $h_N(T)$ given in \eqref{final_data_on_h_N}, the associated rescaled dual cumulants $\bar C_{N,n}$ satisfy the corresponding terminal condition
\begin{equation}\label{eq:terminal_rescaled_cumulants}
\bar C_{N,n}(z_1,\dots,z_n)\big|_{t=T}
=
\mathbf{1}_{n\le m}\,{N\choose n}^{-\frac12}{m\choose n}
\sum_{l=0}^{n}(-1)^{n+l}\sum_{\sigma\in\mathcal P_l^n}
\left(\int_{\mathcal D}\psi f\right)^{m-l}\psi^{\otimes l}(z_\sigma),
\end{equation}
which can be obtained by direct computation from the decomposition of $h_N(T)$. Notice that the final condition \eqref{eq:terminal_rescaled_cumulants} on the rescaled cumulants satisfies
\[\|\bar{C}_{N,n}(T) \|_{L^2_f} \leq C N^{-n/2} \qquad \text{for all } n \geq 0,\]
which satisfies the assumptions of Theorem \ref{thm2} with $k_0=0$ and $T\leq \Delta t_2$. By integrating by parts and using the estimates in Theorem~\ref{thm2} with \(s=2\), we obtain the optimal bound for all $ 0 \leq t \leq \Delta t_2$
\[
\left| \int_{\mathcal{D}^2} V_f\, \bar{C}_{N,2}\, f^{\otimes 2} \right|\lesssim  C N^{-1} .
\]
Putting both estimates together yields 
\[
\left| \int_{\mathcal{D}^m} \psi^{\otimes m} 
\left( F_{N,m}(T) - f^{\otimes m}(T) \right) \right|
\lesssim C N^{-1}.
\]
Although the estimate is first obtained for observables of the
form \(\psi^{\otimes m}\), polarization, linearity, and density of finite sums of product test functions in \(C_c(\mathcal{D}^m)\) yield the stated
\(C_c(\mathcal{D}^m)^\star\) bound. Hence, by the arbitrariness of \(T\) and \(m\),
\[
\left\| F_{N,m}(T)-f^{\otimes m}(T)\right\|_{C_c(\mathcal{D}^m)^\star}
\lesssim C N^{-1}.
\]
Similarly, under the weaker assumption on the kernel, the estimates in Proposition \ref{component_bounds} yield 
\[
\left\| F_{N,m}(T) - f^{\otimes m}(T) \right\|_{C_c(\mathcal{D}^m)^\star}
\lesssim C N^{-1/2}.
\]
\end{proof}

\section{Proof of Theorem \ref{direct_cumulants_bound}}\label{sec4}

Given the previously established estimates on the dual cumulants
\( \bar{C}_{N,n} \), we now aim to derive corresponding bounds for the direct cumulants. We begin by denoting $C_{N,n}^0 = C_{N,n}|_{t=0}$ and recalling the following identity, which relates the unrescaled direct cumulants
\( \kappa_{N,n} \) to the unrescaled dual cumulants \( C_{N,n} \)
\begin{equation}\label{eq:direct_backward}
\begin{split}
&\sum_{n=0}^{N} \binom{N}{n}
\int_{\mathcal{D}^n} \kappa_{N,n}(z_1, \ldots, z_n)(t)\,
C_{N,n}(z_1, \ldots, z_n)(t)\, f(t)^{\otimes n}
\\
&\quad=
\sum_{n=0}^{N} \binom{N}{n}
\int_{\mathcal{D}^n} \kappa_{N,n}^{0}(z_1, \ldots, z_n)\,
C_{N,n}^{0}(z_1, \ldots, z_n)\, (f^0)^{\otimes n}.
\end{split}
\end{equation}
This relation follows from the weak duality formulation of the solution,
together with the definitions of the direct and dual cumulants and their
associated cancellation properties. This relation directly implies the corresponding relation on the rescaled cumulants,
\begin{equation}\label{eq:direct_backward_rescaled}
\begin{split}
&\sum_{n=0}^{N} 
\int_{\mathcal{D}^n} \bar \kappa_{N,n}(z_1, \ldots, z_n)(t)\,
\bar C_{N,n}(z_1, \ldots, z_n)(t)\, f(t)^{\otimes n}
\\
&\quad=
\sum_{n=0}^{N}
\int_{\mathcal{D}^n} \bar \kappa_{N,n}^{0}(z_1, \ldots, z_n)\,
\bar C_{N,n}^{0}(z_1, \ldots, z_n)\, (f^0)^{\otimes n}.
\end{split}
\end{equation}

Our objective is to derive a propagation estimate on the direct cumulants
\( \kappa_{N,n} \) using suitable norms.

\begin{proof}
Let \(\{\kappa_{N,n}\}_{0\le n\le N}\) and \(\{ C_{N,n}\}_{0\le n\le N}\) denote the
direct and dual cumulants, related by the duality identity~\eqref{eq:direct_backward}. For \(1\leq i\leq n\), let \(\Pi_i\) denote the averaging operator in the
\(i\)-th variable with respect to \(f\), namely
\[
\Pi_i\phi(z_1,\ldots,z_n)
:=
\int_{\mathcal D}
\phi(z_1,\ldots,z_{i-1},z_i',z_{i+1},\ldots,z_n)
f(z_i')\,dz_i' .
\]
We define
\[
P_n:=(\mathrm{Id}-\Pi_1)\cdots(\mathrm{Id}-\Pi_n),
\]
and choose the terminal data at level \(n\) as
\[
\bar{C}_{N,n}(T)
:=
\mathbbm{1}_{n \geq k}N^{-n/2}P_n\operatorname{sgn}(\bar\kappa_{N,n}(T)).
\]
By construction, \(\bar{C}_{N,n}(T)\) satisfies the cancellation condition \eqref{eq:dual_cumulant_cancellation} in
each variable. Indeed, since \((\Pi_i)^2=\Pi_i\), we have
\[
\Pi_i(\mathrm{Id}-\Pi_i)=0,
\]
and therefore
\[
\Pi_i\bar{C}_{N,n}(T)=0,
\qquad 1\leq i\leq n.
\]
By the definition of \(\Pi_i\), this is precisely the cancellation condition. The final data above also satisfy the assumptions of Theorem \ref{thm2}: First, the reconstructed observable \(h_N(T)\) is bounded in \(L^\infty\). Indeed, since each \(\Pi_i\) is a contraction on \(L^\infty\), we have
\[
\|P_n\phi\|_{L^\infty}\leq 2^n\|\phi\|_{L^\infty}.
\]
Therefore,
\[
\|P_n\operatorname{sgn}(\bar\kappa_{N,n}(T))\|_{L^\infty}\leq 2^n.
\]
Using the relation
\[
C_{N,n}(T)=\binom{N}{n}^{-1/2}\bar C_{N,n}(T),
\]
we obtain
\[
\begin{aligned}
\|h_N(T)\|_{L^\infty}
&\leq
\sum_{n=k}^N
\sum_{\sigma\in P_n^N}
\|C_{N,n}(T,z_\sigma)\|_{L^\infty}  \\
&\leq
\sum_{n=k}^N
\binom{N}{n}
\binom{N}{n}^{-1/2}
N^{-n/2}
\|P_n\operatorname{sgn}(\bar\kappa_{N,n}(T))\|_{L^\infty}  \\
&\leq
\sum_{n=k}^N
2^n N^{-n/2}\binom{N}{n}^{1/2}
\leq
\sum_{n=k}^N
\frac{2^n}{(n!)^{1/2}}
\lesssim 1 .
\end{aligned}
\]
This is actually the most stringent requirement as it is the one that forces us to take the $N^{-n/2}$ factor in $\bar C_{N,n}(T)$, which in turn prevents us from being able to control any scale beyond $\bar \kappa_{N,n}=O(1)$. It is immediate to check that \eqref{finaldatadual} is satisfied with $k=k_0$ since $\bar C_{N,n}(T)=0$ for $n<k$ in that case. Then, for $T \leq \Delta t_k$, we have
\[
\|\bar C_{N,n}^0\|_{H^{-k}_{f}} \lesssim C_k\,C^{nk}\,N^{-\frac{k}{2}}.
\]
Here we use the negative Sobolev-type norm defined via the decomposition
\[
\|u\|_{H^{-k}_f(\mathcal{D}^{n})} := \inf \left\{ \|h^{(k)}\|_{L^2_f(\mathcal{D}^n)} 
+ \sum_{q=1}^{k} w^{(k),q} 
\;\middle|\; 
u = h^{(k)} + \sum_{q=1}^k \sum_{i_1,\cdots,i_q=1}^n \operatorname{div}_{z_{i_1}\ldots z_{i_q}} w^{(k),i_1,\ldots,i_q}
\right\},
\]
where we define \[ w^{(k),q} := \sup_{i_1,\ldots,i_q=1\ldots n}\|w^{(k),i_1,\ldots,i_q}\|_{L^2_f(\mathcal{D}^n)}. \]
From this and identity \eqref{eq:direct_backward_rescaled}, we obtain 
\begin{equation}\label{eq:direct_duality_bound}
\sum_{n=0}^{N}
\int_{\mathcal D^n} \bar{\kappa}_{N,n}(T)\, \bar{C}_{N,n}(T)\, f(T)^{\otimes n}
\;\lesssim\;
C_k N^{-k/2} \sum_{n=0}^{N} C^{nk}\Lambda_{N,n,k},
\end{equation}
with $\Lambda_{N,n,k}$ defined as in \eqref{Lambda_{N,n,k}}. 
Indeed, using the decomposition defining the \(H^{-k}_f\)-norm and integrating
by parts in the right-hand side of~\eqref{eq:direct_backward_rescaled}, the
derivatives are transferred onto \(f^0\bar\kappa_{N,n}^{0}\), producing precisely
the quantities entering \(\Lambda_{N,n,k}\).
Substituting the final data $\bar{C}_{N,n}(T)$ into the left--hand side, and using that \(P_n\) is self-adjoint in \(L^2_{f}\), while
\(\bar\kappa_{N,n}(T)\) satisfies the cancellation condition, we obtain \(P_n\bar\kappa_{N,n}(T)=\bar\kappa_{N,n}(T)\). Hence
\[
\begin{aligned}
\sum_{n=0}^{N}
\int_{\mathcal D^n}
\bar\kappa_{N,n}(T)\bar C_{N,n}(T) f(T)^{\otimes n}
&=
\sum_{n=k}^{N}
N^{-n/2}
\int_{\mathcal D^n}
P_n\bar\kappa_{N,n}(T)
\operatorname{sgn}(\bar\kappa_{N,n}(T))
f(T)^{\otimes n}  \\&=
\sum_{n=k}^{N}
N^{-n/2}
\|\bar\kappa_{N,n}(T)\|_{L^1_{f}},
\end{aligned}
\]
which gives
\[\sum_{n=k}^N N^{-n/2} \|\bar{\kappa}_{N,n}(T) \|_{L^1_f} \lesssim C_k N^{-k/2}\,
\sum_{n=0}^{N}
C^{nk}\Lambda_{N,n,k}.\]
Using the assumption $\sum_{n=0}^{N} C^{nk}\Lambda_{N,n,k} \lesssim 1$, we obtain
\begin{equation}\label{eq:direct_L1_bound}\| \bar{\kappa}_{N,k}(T) \|_{L^1_f}
\lesssim C_k\, .
\end{equation}
Since \(\kappa_{N,k}=\binom{N}{k}^{-1/2}\bar\kappa_{N,k}\) and \(k\) is fixed, we obtain the estimate 
\[
\|\kappa_{N,k}(T)\|_{L^1_f}
\lesssim C_k N^{-k/2}.
\]
By the arbitrariness of $T$, we obtain \eqref{eq:direct_L1_bound1}.
\end{proof}

\backmatter

\bmhead{Acknowledgements}

P.-E. J. and N. K. were partially supported by NSF DMS Grant 2508570.

\medskip

\noindent\textbf{Data Availability}\quad
Data sharing is not applicable to this article as no datasets were
generated or analysed during the current study.

\medskip

\section*{Declarations}

\noindent\textbf{Conflict of interest}\quad
The authors declare that they have no conflict of interest.

\begin{appendices}

\renewcommand{\theequation}{\thesection.\arabic{equation}}
\renewcommand{\theHequation}{\thesection.\arabic{equation}}
\setcounter{equation}{0}

\section{Weak-Duality Solutions}\label{secA1}
In this appendix, we recall the notion of weak-duality solutions for the
Liouville equation introduced in \cite{bresch2024duality}, which is
the appropriate solution framework for the duality method used in this
paper. Roughly speaking, a weak-duality solution is characterized through
its duality with a suitable bounded weak solution of the corresponding
backward Liouville equation for each prescribed terminal data. This
formulation is particularly useful for singular interaction kernels, for
which the forward Liouville equation may not be well defined in the usual
weak sense and uniqueness of solutions to the backward equation is not
available in general. We state below only the definitions and properties
needed in the present work, and refer the reader to
\cite{bresch2024duality} for a complete discussion of the construction,
existence, and further properties of weak-duality solutions.

\subsection*{The backward Liouville equation}
Given a final time $T>0$, we consider the backward Liouville equation
\begin{equation}\label{eq:backward_liouville}
\partial_t h_N
+ \sum_{i=1}^N \left(
v_i \cdot \nabla_{x_i} h_N
+ \frac{1}{N-1}\sum_{j\neq i} K(x_i-x_j)\cdot \nabla_{v_i} h_N
\right)
=0,
\end{equation}
with prescribed terminal condition $h_N(T)=h_N^T$. 

The equation above is the adjoint, in the sense of distributions, of the forward Liouville equation~\eqref{eq:Liouville_equation} satisfied by the $N$--particle density.

When the particle flow is sufficiently regular, the backward formulation is equivalent to the classical formulation of the Liouville equation. The advantage of the dual formulation is that it remains meaningful even for interaction kernels of low regularity, where the existence or uniqueness of the particle flow may not be available.

\subsection*{Weak duality solutions}
\begin{definition}
 We say that a family of probability densities \(F_N\) is a \textbf{weak duality solution} to the Liouville equation~\eqref{eq:Liouville_equation} with initial data \(F_N^0\) if, for every final time \(T>0\) and for every bounded terminal test function \(h_N^T\) vanishing at infinity, there exists a bounded weak solution \(h_N\) of the backward Liouville equation~\eqref{eq:backward_liouville} with terminal data \(h_N^T\) such that
\begin{equation}\label{eq:duality_identity}
\int_{\mathcal{D}^N} h_N^T(z)\, F_N^T(z)\,dz
=
\int_{\mathcal{D}^N} h_N^0(z)\, F_N^0(z)\,dz,
\end{equation}
where \(F_N^T:=F_N(T)\) and \(h_N^0:=h_N(0)\).
\end{definition} 

Existence of weak-duality solutions under the assumptions considered here is established in \cite{bresch2024duality}.

The identity above expresses the conservation of the pairing between the forward density $F_N$ and backward observable $h_N$ along the particle dynamics.

This notion of solution is the natural framework for the duality method developed in the present work. Indeed, by choosing appropriate families of backward solutions $h_N^T$, one can propagate quantitative estimates backward in time and thus obtain quantitative information on the forward \(N\)-particle density, including higher-order correlations and deviations from the mean-field limit.

\end{appendices}

\bibliography{sn-bibliography}







\end{document}